\newtheorem{remark}{Remark}[section]
\title{CONVERGENCE OF ADAPTIVE MIXED FINITE
ELEMENT METHOD FOR CONVECTION-DIFFUSION-REACTION
EQUATIONS\thanks{This work was supported in part by The Natural
Science Foundation of Chongqing city under Grant No. CSTC,
2010BB8270, The Education Science Foundation of Chongqing
(KJ120420), The National Natural Science Foundation of China
(11171239),  The Project-sponsored by Scientific Research Foundation for the Returned Overseas Chinese
Scholars and Open Fund of Key Laboratory of Mountain Hazards and Earth Surface Processes, CAS.}}
\author{Shaohong Du\thanks{School of Science, Chongqing Jiaotong University,
                           Chongqing 400047, China, ({\it
                           dushhong@gmail.com}).}
\and Xiaoping XIE\thanks{Corresponding author. School of Mathematics, Sichuan University,
Chengdu 610064, China ({\it xpxiec@gmail.com}). }}
\begin{document}

\maketitle
\begin{small}
  {\bf{Abstract.}  \rm{We prove the convergence of an adaptive mixed finite
element method (AMFEM) for (nonsymmetric)
convection-diffusion-reaction equations. The convergence result
holds from the cases where convection or reaction is not present to
convection-or reaction-dominated problems. A novel technique of
analysis is developed without any quasi orthogonality for stress and
displacement variables, and without marking the oscillation dependent on
discrete solutions and data. We show that AMFEM
is a contraction of the error of the stress and displacement
variables plus some quantity. Numerical experiments confirm the
theoretical results.}}
\end{small}

\begin{it} Key words.\end{it} adaptive mixed finite element method,
quasi orthogonality, oscillation, convergence

\begin{it} AMS subject classifications.\end{it} 65N30, 65N15,
65N12, 65N50

\pagestyle{myheadings} \thispagestyle{plain} \markboth{}{CONVERGENCE OF ADAPTIVE MIXED FINITE ELEMENT METHOD}

\section {Introduction and main results}
Let $\Omega$ be a bounded polygonal or polyhedral domain in
${\mathbb{R}}^{d},d=2\ {\rm or}\ 3$. We consider the following
convection-diffusion-reaction equations:
\begin{equation}\label{equation1}
 \left \{ \begin{array}{ll}
    -\nabla\cdot(S\nabla p)+\nabla\cdot(p{\bf w})+rp  =f \quad   \mbox{in}\ \ \Omega,\\
    \hspace{41mm} p=0  \quad \mbox{on}\ \partial{\Omega},
 \end{array}\right.
\end{equation}
where $S\in L^{\infty}(\Omega;{\mathbb{R}}^{d\times d})$ is an
inhomogeneous and anisotropic diffusion-dispersion tensor, ${\bf w}$
is a (dominating) velocity field, $r$ is a reaction function, and
$f$ is a source term. The choice of homogeneous boundary conditions
is made for ease of presentation, since similar results are valid
for other boundary conditions.

Adaptive methods for the numerical solution of PDEs
 are now standard tools in science and
engineering to achieve better accuracy with minimum degrees of
freedom. The adaptive procedure of (\ref{equation1}) consists of
loops of the form
\begin{equation}\label{equation1+}
{\rm SOLVE}\rightarrow {\rm ESTIMATE}\rightarrow {\rm
MARK}\rightarrow {\rm REFINE}.
\end{equation}
A posteriori error estimation (ESTIMATE) is an essential ingredient
of adaptivity, and reaches its mature level after two decades of
development \cite{Anis;Oden,Babuska;Rheinboldt1,Babuska;Rheinboldt2,
Babuska;Strouboulis,
Carstensen;Hu,C-Hu-Orlando,Du;Xie,Verfurth}. However, the analysis of convergence of the whole
algorithm (\ref{equation1+}) is still in its infancy, and is carried
out mainly for standard adaptive finite element methods (AFEM) \cite{Cascon;Kreuzer;Nochetto;Siebert,Dorfler,Morin2000,Morin2003,Morin;Nochetto;Siebert}.

Due to the saddle-point characteristic of mixed finite element   approximation, there is no orthogonality
available, as is one of main difficulties in the convergence analysis of
AMFEM.  Thus one has to find some quasi-orthogonality instead of the
orthogonality,  and the occurrence of oscillation of data is inevitable.
Hence, how to deal with the oscillation becomes a key issue  in the analysis. For the convergence of AMFEM, the present studies mainly
focus   on Poisson equations.  In
\cite{Carstensen;Hoppe}, {\it Carstensen} and {\it Hoppe} proved the
error reduction and convergence for  the lowest-order
Raviart-Thomas element with marking the data oscillation.
{\it Chen}, {\it Holst} and {\it Xu} \cite{Chen;Holst;Xu} showed the
convergence of a quasi-error with marking the data oscillation.
In \cite{Becker,Carstensen;Rabus,Du2}, the convergence was analyzed
for   the lowest-order Raviart-Thomas element where the local
refinement was performed by  using only
either the estimators or
the data oscillation term.

 For general diffusion problems and more general
mixed elements, by using the orthogonality of the divergence of the flux, {\it Du} and {\it Xie} \cite{Du3}  showed the convergence of the flux error plus some quantity
   without marking the oscillation.

The purpose of this paper is to prove the following convergence
results for an AMFEM for the convection-diffusion-reaction equations
(\ref{equation1}) and verify them computationally.
\begin{theorem}\label{theorem--}{\rm(Convergence of AMFEM)}\ Denote by
$\{\mathcal{T}_{k},e_{k}^{2},A_{k}^{2}\}_{k\geq0}$ the sequence of
meshes, the error of the stress and displacement variables, and some
quantity (defined by (\ref{quantity})) produced by the AMFEM algorithm.
Let $h_{0}$ be  the mesh size of the quasi-uniform initial mesh
$\mathcal{T}_{0}$.  Then there exist two positive constants $q$ and $\alpha\in(0,1)$ such that
\begin{equation*}
e_{k+1}^{2}+(1-h_{0}q)A_{k+1}^{2}\leq\alpha^{2}(e_{k}^{2}+(1-h_{0}q)A_{k}^{2})
\end{equation*}
when
$h_{0}\leq\frac{1-\alpha^{2}}{1+\alpha^{2}}\frac{1}{q}$. This
means that AMFEM, as $h_0$ is small enough, converges with a linear rate $\alpha$, namely,
\begin{equation*}
e_{k}^{2}+(1-h_{0}q)A_{k}^{2}\leq\alpha^{2k}(e_{0}^{2}+(1-h_{0}q)A_{0}^{2}).
\end{equation*}
\end{theorem}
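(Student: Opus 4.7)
The plan is to follow the three-step abstract framework of \cite{Cascon;Kreuzer;Nochetto;Siebert}---a quasi-orthogonality identity, a local estimator reduction under bisection, and a D\"orfler-driven contraction---while accommodating the two features that make this problem non-standard: the saddle-point structure of the mixed discretization (no Galerkin orthogonality is available for the stress--displacement pair) and the non-symmetric, possibly indefinite lower-order terms $\nabla\cdot(p\mathbf{w})+rp$. The guiding principle, in the spirit of \cite{Du3}, is that although no orthogonality holds for the stress and the displacement separately, a weighted combination of the error and the auxiliary quantity $A_k^{2}$ satisfies a Pythagorean-type inequality up to a perturbation of order $h_0$.

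First I would derive a quasi-orthogonality inequality of the form
\begin{equation*}
e_{k+1}^{2} \leq e_k^{2} - \Delta_k^{2} + h_0 q\,(A_k^{2}-A_{k+1}^{2}) + C h_0 \Delta_k^{2},
\end{equation*}
where $\Delta_k^{2}$ denotes the squared norm of the discrete increment between levels $k$ and $k+1$. Starting from the mixed variational identity for $e_k$, testing against the increment, and inserting the $(k+1)$-level Galerkin projection produces cross terms originating from the convective and reactive parts of the bilinear form; because these do not cancel in the mixed framework, they must be absorbed using Poincar\'e and inverse estimates on the quasi-uniform initial mesh $\mathcal{T}_0$, which is exactly where the factor $h_0$ enters.

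Next I would prove an estimator reduction $A_{k+1}^{2} \leq \rho A_k^{2} + C\Delta_k^{2}$ for some $\rho<1$, exploiting diameter shrinkage on the marked set through bisection together with local stability on the remaining elements. Combining the quasi-orthogonality, the estimator reduction, the global reliability $e_k^{2} \leq C_{\mathrm{rel}} A_k^{2}$, and the D\"orfler marking condition, a standard convex-combination argument produces an $\alpha<1$, independent of $k$, such that
\begin{equation*}
e_{k+1}^{2} + (1-h_0 q)A_{k+1}^{2} \leq \alpha^{2}\bigl(e_k^{2}+(1-h_0 q)A_k^{2}\bigr),
\end{equation*}
valid as long as $1-h_0 q > 0$ and $h_0 \leq (1-\alpha^{2})/((1+\alpha^{2})q)$. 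Iterating this one-step contraction gives the stated geometric decay.

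The main obstacle will be the quasi-orthogonality itself. In the symmetric pure-diffusion case of \cite{Du3}, the divergence of the flux enjoys an exact Galerkin orthogonality that effectively decouples one of the two error components; here neither the convective operator $\mathbf{w}\cdot\nabla$ nor the reactive coefficient $r$ admits such a clean decoupling, so every mixed identity for the discrete stress--displacement increment at levels $k$ and $k+1$ picks up cross terms mixing the two error components. Controlling these uniformly in $k$ is possible only after extracting an $h_0$ factor from the quasi-uniform initial mesh, which is precisely the origin of the smallness condition on $h_0$ and of the auxiliary quantity $A_k^{2}$ entering the contraction with weight $1-h_0 q$ rather than weight $1$; it is also the mechanism that allows the authors to avoid marking any data oscillation.
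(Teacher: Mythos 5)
Your overall architecture --- a quasi-orthogonality inequality perturbed by an $h_0$-term, an estimator reduction, reliability, and a D\"orfler-driven convex-combination argument --- matches the paper's, and you correctly identify the quasi-orthogonality as the crux. However, the mechanism you propose for producing the $h_0$ factor does not work, and it is exactly there that the paper's main technical contribution lives. After inserting the two discrete variational identities, the term that breaks orthogonality for the stress is $(Q_{h}p-p_{h},\nabla\cdot({\bf u}_{h}-{\bf u}_{H}))$ (with analogous terms for the divergence and the weighted displacement errors in Lemmas \ref{Lemma9} and \ref{Lemma10}). Absorbing it requires the superconvergence-type bound $\|Q_{h}p-p_{h}\|\lesssim \|h\|_{L^{\infty}(\Omega)}^{1/2}e_{h}+{\rm osc}_{h}$ of Theorem \ref{theorem1}. ``Poincar\'e and inverse estimates on the quasi-uniform initial mesh'' cannot deliver this: inverse estimates produce negative powers of $h$, and $\|Q_{h}p-p_{h}\|$ is not even directly dominated by $e_{h}$, since the displacement enters $e_{h}$ only through the weight $c_{{\bf w},r,K}$, which may vanish. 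The paper obtains the bound by a duality argument: the adjoint convection--diffusion--reaction problem \eqref{equation4} with datum $Q_{h}p-p_{h}$, the elementwise $H^{2}$-regularity \eqref{equation5}, the commuting-diagram property of the Raviart--Thomas interpolation $\Pi_{h}$, Vohral\'{\i}k's postprocessed displacement $\tilde{p}_{h}$, and the six-term decomposition $I_{1},\dots,I_{6}$ (the jump term $I_{6}$ is what limits the gain to $h^{1/2}$ rather than $h$). Without this estimate, or a substitute for it, your first displayed inequality cannot be established and the smallness condition on $h_{0}$ has no source.

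A secondary oversimplification: the quantity $A_{k}^{2}$ in \eqref{quantity} is not just the estimator but a weighted sum of $\|\nabla\cdot({\bf u}-{\bf u}_{k})\|^{2}$, $\eta_{k}^{2}$, and the data terms $\|f-f_{k}\|^{2}+\|h\nabla_{h}(S^{-1}{\bf u}_{k}\cdot{\bf w})\|^{2}$. These components do not satisfy a single clean reduction $A_{k+1}^{2}\leq\rho A_{k}^{2}+C\Delta_{k}^{2}$ with $\rho<1$: only the $\eta$-component contracts via D\"orfler marking and Lemma \ref{lemma7}, while the divergence error and the data terms are merely non-increasing up to controlled increments. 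The contraction is therefore assembled by choosing the weights $\gamma_{1},\gamma_{2}$ and the parameters $\varepsilon,\delta_{1},\dots,\delta_{5}$ in a specific order, which is where $q$ and $\alpha$ are actually pinned down; this is bookkeeping rather than a conceptual gap, but your sketch glosses over it.
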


This theorem     extends the convergence results in \cite{Du3} in the following
several
aspects.
\begin{itemize}
\item We deal with more general
convection-diffusion-reaction equations here with variable coefficients
$S,{\bf w}$ and $r$, whereas in \cite{Du3} ${\bf w}$ and $r$ vanish.
\item The orthogonality for the divergence of the flux is
absent due to the convection term ${\bf w}\cdot\nabla p$ and the
zero order term $(r+\nabla\cdot{\bf w})p$.  So this contribution considers not
only
the flux (stress variable) error but also the displacement variable error.
\item The quasi-orthogonality for stress and displacement
variables also fails due to the  terms ${\bf w}\cdot\nabla p$
and  $(r+\nabla\cdot{\bf w})p$.  This will lead to an
additional constraint on the  mesh size, $h_0$, of the  quasi-uniform initial  mesh
$\mathcal{T}_{0}$.
\item The oscillation term depends on the discrete solution
and data. Therefore, the oscillation and error can not be
reduced separately here.  In \cite{Du3} the oscillation term is not included in the a
posteriori indicators.

\item Since the error and oscillation are now coupled, in
order to prove convergence without marking the oscillation term, we
need to handle them together by following the same  idea as
in \cite{Demlow, Hu2}.

\item In comparison with  previous analysis methods, we consider the a posteriori indicators with weighted
factors. We also release the constraint that the divergence of the
convection term is free in contrast to the analysis of standard AFEM (see
\cite{Mekchay;Nochetto}).
\end{itemize}

The rest of this paper is organized as follows. Section 2 gives some
preliminaries and details on notations. Section 3 derives an estimate
for the error between $L^{2}-$projection of the displacement and its
approximation solution, which is  key to the convergence analysis. Section 4 shows the
estimator reduction. We prove theorem
\ref{theorem--} (Convergence of AMFEM algorithm) in section 5 and present four numerical experiments to illustrate
properties of AFMEM in section 6.
\section{Assumptions, weak problem, and AMFEM algorithm}\ For a
domain $A\subset\mathbb{R}^{d}$, we denote by $L^{2}(A)$ and ${\bf
L}^{2}(A) =:(L^{2}(A))^{d}$ the spaces of square-integrable
functions, by $(\cdot,\cdot)_{A}$ the $L^{2}(A)$ or ${\bf L}^{2}(A)$
inner product, by $||\cdot||_{A}$ the associated norm, and by $|A|$
the Lebesgue measure of $A$. Let $H^{k}(A)$ be the usual
Sobolev space equipped with norm $||\cdot||_{k,A}$ for $k=1,2$;
$H_0^1(A):=\{v\in H^{1}(A):\ v|_{\partial A}=0\}$, $H({\rm
div},A):=\{{\bf v}\in {\bf L}^{2}(A):{\rm div}\ {\bf v}\in
L^{2}(A)\}$. $<\cdot,\cdot>_{\partial A}$ denotes $d-1$-dimensional
inner product on $\partial A$ for the duality paring between
$H^{-1/2}(\partial A)$ and $H^{1/2}(\partial A)$. In what follows we shall
omit the subscript $\Omega$ when $A=\Omega$.

Let $\mathcal{T}_{h}$ be a shape regular triangulation in the sense
of \cite{Ciarlet}, and denote the mesh size $h_{T}:=|T|^{1/d}$ with
$|T|$ the area of $T\in \mathcal{T}_{h}$. Let $C_{Q}$ be a positive constant depending
only on a quantity $Q$, and $C_{i} (i=1,2,\cdots)$ positive
constants determined only by the shape regularity of
$\mathcal{T}_{h}$. We denote by $\varepsilon_{h}$ the set of element
sides in $\mathcal{T}_{h}$, by $\varepsilon_{h}^{0}$ the set of
interior sides of elements. For $K\in\mathcal{T}_{h}$, denote by
$\varepsilon_{K}$ the set of sides of $K$. Furthermore, we denote by
$\omega_{K}$ and $\omega_{\sigma}$ the unions of all elements in $\mathcal{T}_{h}$ respectively sharing
a side with $K$ and sharing
 a side $\sigma\in\varepsilon_{h}$.
 We   use the
"broken Sobolev space" $H^{1}(\bigcup\mathcal{T}_{h}) :=\{\varphi\in
L^{2}(\Omega):\varphi|_{K}\in H^{1}(K),\forall
K\in\mathcal{T}_{h}\}$. $H^{2}(\bigcup\mathcal{T}_{h})$ is defined
analogously. Denote by $[v]|_{\sigma}
:=(v|_{K})|_{\sigma}-(v|_{L})|_{\sigma}$ the jump of $v\in
H^{1}(\bigcup\mathcal{T}_{h})$ over an interior side $\sigma :=K\cap
L$ of diameter $h_{\sigma}: ={\rm diam}(\sigma)$, shared by the two
neighboring (closed) elements $K,L\in\mathcal {T}_{h}$. Especially,
$[v]|_{\sigma} :=(v|_{K})|_{\sigma}$ if
$\sigma\subset\partial K\cap\partial\Omega$. Note that $[\cdot]$ is a linear
operator over the broken Sobolev space
$H^{1}(\bigcup\mathcal{T}_{h})$.

We note that throughout the paper,
the local version of differential operator $\nabla$ is understood in
the distribution sense, namely,
$\nabla_{h}:H^{1}(\bigcup\mathcal{T}_{h})\rightarrow
(L^{2}(\Omega))^{d}$ is defined with $\nabla_{h}v|_{K}
:=\nabla(v|_{K}) $ for all $K\in\mathcal{T}_{h}$.

Given a unit normal vector ${\bf n}_{\sigma}=(n_1,\cdots,n_d)^T$ along the side
$\sigma$ with $d=2,3$, we define the tangential component of a vector ${\bf
v}\in\mathbb{R}^{d}$ with respect to ${\bf n}_{\sigma}$ by
\begin{equation*}
\gamma_{{\bf t}_{\sigma}}({\bf v}) :=\left \{ \begin{array}{ll}
{\bf v}\cdot (-n_2,n_1)^T \quad   \mbox{if}\ \ d=2,\\
{\bf v}\times{\bf n}_{\sigma} \quad   \mbox{if}\ \ d=3,
 \end{array}\right.
\end{equation*}
where  
$\times$ denotes
the usual vector product of two vectors in $\mathbb{R}^{3}$.

Following \cite{Vohralik1}, we suppose that there exists an original
triangulation $\mathcal{T}_{0}$ of $\Omega$ such that  data of
the problem ($\ref{equation1}$) are given in the
following way.\\
\textbf{Assumptions of data }:\\
{\it (D1)\ $S_{K} :=S|_{K}$ is a constant, symmetric, and uniformly
positive definite tensor such that $c_{S,K}{\bf v}\cdot{\bf v}\leq
S_{K}{\bf v}\cdot{\bf v}\leq C_{S,K}{\bf v}\cdot{\bf
v}$ holds  for all ${\bf v}\in\mathbb{R}^{d}$ and all
$K\in\mathcal{T}_{0}$ with $c_{S,K}>0,C_{S,K}>0$;\\
(D2)\ ${\bf w}\in RT_{0}(\mathcal {T}_{0})$ (see below) and
$|{\bf w}|_{K}|\leq C_{{\bf w},K}$ for all
$K\in\mathcal{T}_{0}$ with $C_{{\bf w},K}\geq0$;\\
(D3)\ $r_{K} :=r|_{K}$ is a constant for all
$K\in\mathcal{T}_{0}$;\\
(D4)\ $c_{{\bf w},r,K} :=1/2\nabla\cdot{\bf w}|_{K}+r|_{K}\geq0$ and
$C_{{\bf w},r,K} :=|\nabla\cdot{\bf w}|_{K}+r_{K}|$ for all $K\in\mathcal{T}_{0}$;\\
(D5)\ $f\in L^{2}(\Omega)$;\\
(D6)\ if $c_{{\bf w},r,K}=0$, then $C_{{\bf w},r,K}=0$.}

Note that  in \cite{D1,D2}  $f|_{K}$ is assumed to be a polynomial of degree at most $k$ for each
$K\in\mathcal{T}_{0}$ so as to derive the efficiency of the residual indicators. Here we relax the restriction of $f$ (cf.  (D5)).

Introduce the
stress variable ${\bf u} :=-S\nabla p$, the mixed variatinal problem of (\ref{equation1}) reads
as: Find $({\bf u},p)\in H({\rm div},\Omega)\times L^{2}(\Omega) $
such that
\begin{equation}\label{equation-1}
(S^{-1}{\bf u},{\bf v})-(p,\nabla\cdot{\bf v})=0\ \ \ {\rm for\
all}\ {\bf v}\in H({\rm div},\Omega),
\end{equation}
\begin{equation}\label{equation0}
(\nabla\cdot{\bf u},\varphi)-(S^{-1}{\bf u}\cdot{\bf
w},\varphi)+((r+\nabla\cdot{\bf w})p,\varphi)=(f,\varphi)\ \ {\rm
for\ all}\ \ \varphi\in L^{2}(\Omega).
\end{equation}

Let
$P_{0}(K)$ denote the set of constant functions on each
$K\in\mathcal{T}_{h}$.  We respectively define the lowest order
Raviart-Thomas finite element (\cite{Raviart and Thomas})  space and the
piecewise constant space as following:
$$
RT_{0}(\mathcal {T}_{h}) :=\left\{ \begin{array}{c} {\bf q}_{h}\in
{\bf H}({\rm div},\Omega):\ \forall K\in\mathcal {T}_{h},\  \exists
{\bf a}\in\mathbb{R}^{d},\  \exists b\in\mathbb{R}, \\ {\rm such\
that}\ {\bf q}_{h}({\bf x})={\bf a}+b{\bf x},{\rm for\ all}\ {\bf
x}\in K.
\end{array}\right\}
$$
and
$$
P_{0}({\mathcal {T}_{h}}) :=\{v_{h}\in L^{\infty}(\Omega):\ \forall
K\in\mathcal {T}_{h},\  v_{h}|_{K}\in P_{0}(K) \}.
$$
We note that $\nabla\cdot
(RT_{0}(\mathcal{T}_{h}))\subset P_{0}({\mathcal {T}_{h}})$.

The centered mixed finite element scheme (cf.
\cite{Douglas,Vohralik1}) of (\ref{equation1}) reads as: Find $({\bf
u}_{h},p_{h})\in RT_{0}(\mathcal {T}_{h})\times P_{0}({\mathcal
{T}_{h}})$ such that
\begin{equation}\label{equation2}
(S^{-1}{\bf u}_{h},{\bf v}_{h})-(p_{h},\nabla\cdot{\bf v}_{h})=0\ \
\ {\rm for\ all}\ {\bf v}_{h}\in RT_{0}(\mathcal {T}_{h}),
\end{equation}
\begin{equation}\label{equation3}
(\nabla\cdot{\bf u}_{h},\varphi_{h})-(S^{-1}{\bf u}_{h}\cdot{\bf
w},\varphi_{h})+((r+\nabla\cdot{\bf
w})p_{h},\varphi_{h})=(f,\varphi_{h})\ \ {\rm for\ all}\ \
\varphi_{h}\in P_{0}({\mathcal {T}_{h}}).
\end{equation}

In what follows, we shall show an AMFEM algorithm based on the a posteriori error estimator  developed in \cite{D1}. We note that our convergence analysis below is also valid for AMFEM based on the estimator proposed in \cite{D2} .

 Suppose that the
module SOLVE outputs a pair of discrete solutions over
$\mathcal{T}_{h}$, namely, $({\bf u}_{h},p_{h})={\rm
SOLVE}(\mathcal{T}_{h})$.
The  estimator
  in \cite{D1}  consists of   several
indicators with different weight factors, where the elementwise
estimator $\eta_{\mathcal{T}_{h}}^{2}({\bf u}_{h},p_{h}, K)$ can, for convenience, be  abbreviated to
\begin{equation*}
\displaystyle\eta_{\mathcal{T}_{h}}^{2}({\bf u}_{h},p_{h}, K)
:=D_{K}^{2}h_{K}^{2}||S^{-1}{\bf
u}_{h}||_{K}^{2}+\alpha_{K}^{2}h_{K}^{2}||R_{K}||_{K}^{2}+
\sum\limits_{\sigma\in\varepsilon_{K}}D_{\sigma}^{2}h_{\sigma}||[\gamma_{{\bf
t}_{\sigma}}(S^{-1}{\bf u}_{h})]||_{\sigma}^{2}.
\end{equation*}
Here  $\alpha_{K}=\min(h_{K}/\sqrt{c_{S,K}},1/\sqrt{c_{{\bf w},r,K}})$,
$R_{K}$ is the elementwise residual
defined by
\begin{equation*}
R_{K} :=f-\nabla\cdot{\bf u}_{h}+(S^{-1}{\bf u}_{h})\cdot{\bf
w}-(r+\nabla\cdot{\bf w})p_{h},
\end{equation*}
and $D_{K}$, $D_\sigma$ denote two variants
of  coefficients over each element $K\in\mathcal{T}_{h}$ and each side
$\sigma\in\varepsilon_{h}$    respevtively given by
\begin{equation*}
D_{K}^{2}:=c_{{\bf w},r,K}+C_{{\bf w},r,K}^{2}\alpha_{K}^{2}+
\max_{K':\bar{K'}\cap\bar{K}\neq\emptyset}c_{{\bf w},r,K'}+\max_{K':\bar{K'}\cap\bar{K}\neq\emptyset}
\frac{|\nabla\cdot{\bf w}|_{K'}|^{2}}{c_{{\bf w},r,K'}},
\end{equation*}
\begin{equation*}
D_{\sigma}^{2} :=\frac{1}{2}\max_{K:\bar{K}\cap\bar{\sigma}\neq\emptyset}C_{S,K}+
\frac{1}{2}\min\{\max_{K:\bar{K}\cap\bar{\sigma}\neq\emptyset}\frac{C_{{\bf w},K}^{2}}{c_{{\bf w},r,K}},
\max_{K:\bar{K}\cap\bar{\sigma}\neq\emptyset}\frac{h_{K}^{2}C_{{\bf w},K}^{2}}{c_{S,K}}\}.
\end{equation*}

Define the global and local
errors, $e_{h}$ and $\mathcal{E}_{K}$,  of the stress and displacement variables as
\begin{equation}\label{error1}
e_{h}^{2} :=\sum\limits_{K\in\mathcal{T}_{h}}\mathcal{E}_{K}^{2},\quad
\mathcal{E}_{K}^{2} :=||S^{-1/2}({\bf u}-{\bf
u}_{h})||_{K}^{2}+c_{{\bf w},r,K}||p-p_{h}||_{K}^{2}.
\end{equation}
From \cite{D1}, or \cite{D2}  but with different forms of $D_{K}$  and $D_\sigma$, it holds an
upper bound estimate
\begin{equation}\label{equation39}
\displaystyle e_{h}^{2}\leq C_{1}\eta_{h}^{2} :=C_{1}\eta_{\mathcal{T}_{h}}^{2}({\bf u}_{h},p_{h},\mathcal{T}_{h})
:=C_{1}\sum\limits_{K\in\mathcal{T}_{h}}\eta_{\mathcal{T}_{h}}^{2}({\bf
u}_{h},p_{h}, K),
\end{equation}
where the positive constant $C_{1}$ depends only on the shape
regularity of the meshes.

For a given triangulation $\mathcal{T}_{h}$ and a
pair of corresponding discrete solutions $({\bf u}_{h},p_{h})\in
RT_{0}(\mathcal{T}_{h})\times P_{0}(\mathcal{T}_{h})$, we assume that  the module
ESTIMATE outputs the indicators
\begin{equation*}
\{\eta_{\mathcal{T}_{h}}^{2}({\bf
u}_{h},p_{h},K)\}_{K\in\mathcal{T}_{h}}={\rm ESTIMATE}({\bf
u}_{h},p_{h},\mathcal{T}_{h}).
\end{equation*}

Let $\bar{R}_{K}$ denote the mean of $R_{K}$ over each element
$K\in\mathcal{T}_{h}$. We define  the oscillation
\begin{equation}\label{oscillation1}
{\rm osc}_{h}^{2}
:=\displaystyle\sum\limits_{K\in\mathcal{T}_{h}}h_{K}^{2}||R_{K}-\bar{R}_{K}||_{K}^{2}.
\end{equation}

We note that throughout this paper  the triangulation $\mathcal{T}_{h}$
means a refinement of $\mathcal{T}_{H}$, and all notations with respect
to the mesh $\mathcal{T}_{H}$ are defined similarly.  We shall also use the notation $A\lesssim B$ to represent
$A\leq CB$ with $C>0$ a mesh-size independent, generic constant.

In MARK step, by   D\"{o}rfler marking we select the
elements to mark according to the indicators, namely, given a grid
$\mathcal{T}_{H}$ with the set of indicators
$\{\eta_{\mathcal{T}_{H}}^{2}({\bf
u}_{H},p_{H},K)\}_{K\in\mathcal{T}_{H}}$ and marking parameter
$\theta\in(0,1]$, the module MARK outputs a subset of making
elements, $\mathcal{M}_{H}\subset\mathcal{T}_{H}$, with
\begin{equation*}
\mathcal{M}_{H}={\rm MARK}(\{\eta_{\mathcal{T}_{H}}^{2}({\bf u}_{H},
p_{H},K)\}_{K\in \mathcal{T}_{H}},\mathcal{T}_{H},\theta)
\end{equation*}
satisfying D\"{o}rfler property
\begin{equation*}
\eta_{\mathcal{T}_{H}}({\bf u}_{H},p_{H},\mathcal{M}_{H}) :=\displaystyle(\sum\limits_{K\in\mathcal{M}_{H}}
\eta_{\mathcal{T}_{H}}^{2}({\bf u}_{H},p_{H},K))^{1/2}\geq
\theta\eta_{\mathcal{T}_{H}}({\bf u}_{H},p_{H},\mathcal{T}_{H}).
\end{equation*}

In REFINE step, we suppose that the refinement rule, such as the
longest edge bisection \cite{Rivara1,Rivara2} or the newest vertex
bisection \cite{Sewell,Mitchell1,Mitchell2}, is guaranteed to
produce conforming and shape regular meshes. Given a fixed integer
$b\geq1$, a mesh $\mathcal{T}_{H}$, and a subset
$\mathcal{M}_{H}\subset\mathcal{T}_{H}$ of marked elements, a
conforming triangulation $\mathcal{T}_{h}$ is output by
\begin{equation*}
\mathcal{T}_{h}={\rm REFINE}(\mathcal{T}_{H},\mathcal{M}_{H}),
\end{equation*}
where all elements of $\mathcal{M}_{H}$ are at least bisected $b$
times. Note that not only marked elements get refined but also
additional elements are refined to recover the conformity of
triangulations.

We now describe the AMFEM algorithm. In doing so, we replace the
subscript $H$ by an iteration counter called $k\geq0$. Let
$\mathcal{T}_{0}$ be a uniform triangulation with a marking
parameter $\theta\in(0,1]$. The basic loop of AMFEM is then given by
the following iterations:
\begin{center}
\begin{tabular}{|c|} \hline
AMFEM algorithm\\
Set $k =0$ and iterate\\
\hspace{-38mm}(1) $({\bf u}_{k},p_{k})={\rm SOLVE}(\mathcal{T}_{k})$; \\
\hspace{-1mm}(2)
$\{\eta_{k}^{2}({\bf u}_{k},p_{k},K)\}_{K\in\mathcal{T}_{k}}=
{\rm ESTIMATE}({\bf u}_{k},p_{k},\mathcal{T}_{k})$;\\
\hspace{-8mm}(3)
$\mathcal{M}_{k}={\rm MARK}(\{\eta_{k}^{2}({\bf u}_{k},
p_{k},K)\}_{K\in\mathcal{T}_{k}},\mathcal{T}_{k},\theta)$;\\
\hspace{-17.5mm}(4) $\mathcal{T}_{k+1}={\rm
REFINE}(\mathcal{T}_{k},\mathcal{M}_{k})$; $k=k+1$.\\ \hline
\end{tabular}
\end{center}
We note that the AMFEM algorithm is a standard one in which it
employs only the error estimator $\{\eta_{\mathcal{T}_{k}}^{2}({\bf
u}_{k},p_{k},K)\}_{K\in\mathcal{T}_{k}}$ and needs neither   marking the
oscillation nor the interior node property.

\section{Estimate for $L^{2}-$projection of the displacement}
This section is devoted to the estimation of
$||Q_{h}p-p_{h}||$, where $Q_{h}$ is the $L^{2}$-projection
operator onto $P_{0}(\mathcal{T}_{h})$. The estimate is one key to
the proof of convergence without the quasi-orthogonality available
due to the convection term. It gives as well
  a posteriori error estimates for the $L^{2}-$projection of the
displacement variable (see remark 3.1).

Consider the following
auxiliary problem:
\begin{equation}\label{equation4}
 \left \{ \begin{array}{ll}
    \nabla\cdot(S\nabla \phi)+\nabla\phi\cdot{\bf w}-
    r\phi  =Q_{h}p-p_{h} \quad   \mbox{in}\ \ \Omega,\\
    \hspace{39mm} \phi=0  \quad \mbox{on}\ \partial{\Omega}.
 \end{array}\right.
\end{equation}
It is well known that there exists a unique solution $\phi\in
H_{0}^{1}(\Omega)$ to the problem (\ref{equation4}) when the
convection and reaction terms satisfy $r+1/2\nabla\cdot{\bf
w}\geq0$ (Assumptions   ($D1$) and ($D4$)) withs the following regularity estimate:
\begin{equation}\label{equation5+}
||\phi||_{H^{1}}=||\phi||_{H^{1}(\bigcup\mathcal{T}_{h})}\lesssim
||Q_{h}p-p_{h}||.
\end{equation}
Moreover, if $\Omega$ is convex, $S\in C^{1,0}(\Omega)$ implies the estimate
\begin{equation}\label{equation5}
||\phi||_{H^{2}(\bigcup\mathcal{T}_{h})}\lesssim ||Q_{h}p-p_{h}||.
\end{equation}
We emphasize that we only need an estimate on $||\phi||_{H^{2}(K)}$
for each $K\in\mathcal{T}_{h}$, i.e., the assumption on $S$ could be
weaken in the sense that only (\ref{equation5}) is required. In
\cite{Carstensen0} Carstensen gave an example  which shows that
when $S$ is piecewise constant, $\phi$ satisfies (\ref{equation5})
but is not $H^{2}$-regular.

Set ${\bf z} :=S\nabla\phi\in H({\rm div},\Omega)$, and denote
$\phi_{h}$ the $L^{2}$-projection of $\phi$ onto
$P_{0}(\mathcal{T}_{h})$, and   $\Pi_{h}$ the interpolation
operator from $H({\rm div},\Omega)$ onto $RT_{0}(\mathcal {T}_{h})$
with the following estimate:
\begin{equation}\label{equation6}
||h^{-1}({\bf z}-\Pi_{h}{\bf z})||\lesssim|{\bf
z}|_{H^{1}(\bigcup\mathcal{T}_{h})}\ \ {\rm for\ all}\ \ {\bf z}\in
H({\rm div},\Omega).
\end{equation}
We refer to \cite{Arnold0,Brezzi;Fortin,Hiptmair} for the detailed
construction of such an interpolation operator $\Pi_{h}$ and the
approximation property.

From (\ref{equation-1}) and (\ref{equation2}), we obtain
\begin{equation}\label{equation7}
(Q_{h}p-p_{h},\nabla\cdot\Pi_{h}{\bf
z})=(p-p_{h},\nabla\cdot\Pi_{h}{\bf z})=(S^{-1}({\bf u}-{\bf
u}_{h}),\Pi_{h}{\bf z}).
\end{equation}
An integration by parts implies
\begin{equation}\label{equation8}
(S^{-1}({\bf u}-{\bf u}_{h}),{\bf z})=(S^{-1}({\bf u}-{\bf
u}_{h}),S\nabla\phi)=-(\nabla\cdot({\bf u}-{\bf u}_{h}),\phi).
\end{equation}
From (\ref{equation0}) and (\ref{equation3}) it follows
\begin{equation}\label{equation9}
\begin{array}{lll}
(\nabla\cdot({\bf u}-{\bf u}_{h}),\phi_{h})&=&(S^{-1}({\bf u}-{\bf
u}_{h})\cdot{\bf w},\phi_{h})-((r+\nabla\cdot{\bf
w})(p-p_{h}),\phi_{h})\vspace{2mm}\\
&=&(S^{-1}({\bf u}-{\bf u}_{h})\cdot{\bf
w},\phi_{h})-(p-p_{h},(r+\nabla\cdot{\bf
w})\phi_{h})\vspace{2mm}\\
&=&(S^{-1}({\bf u}-{\bf u}_{h})\cdot{\bf
w},\phi_{h})-(Q_{h}p-p_{h},(r+\nabla\cdot{\bf w})\phi_{h}).
\end{array}
\end{equation}

Denote $I :=(Q_{h}(\nabla\phi\cdot{\bf
w}),Q_{h}p-p_{h})-(r\phi_{h},Q_{h}p-p_{h})$. In view of the commuting
property of the interpolation operator $\Pi_{h}$, a combination of
(\ref{equation7})-(\ref{equation9}) yields
\begin{equation}\label{equation10}
\begin{array}{lll}
& &||Q_{h}p-p_{h}||^{2}=(Q_{h}p-p_{h},Q_{h}\nabla\cdot{\bf
z})+I=(Q_{h}p-p_{h},\nabla\cdot\Pi_{h}{\bf
z})+I\vspace{2mm}\\
& &\ =(S^{-1}({\bf u}-{\bf u}_{h}),\Pi_{h}{\bf z}-{\bf
z})+(S^{-1}({\bf u}-{\bf u}_{h}),{\bf
z})+I\vspace{2mm}\\
& &\ =(S^{-1}({\bf u}-{\bf u}_{h}),\Pi_{h}{\bf z}-{\bf
z})-(\nabla\cdot({\bf u}-{\bf
u}_{h}),\phi-\phi_{h})-(\nabla\cdot({\bf u}-{\bf
u}_{h}),\phi_{h})+I\vspace{2mm}\\
& &\ =(S^{-1}({\bf u}-{\bf u}_{h}),\Pi_{h}{\bf z}-{\bf
z})-(\nabla\cdot({\bf u}-{\bf u}_{h}),\phi-\phi_{h})-(S^{-1}({\bf
u}-{\bf
u}_{h})\cdot{\bf w},\phi_{h})\vspace{2mm}\\
& &\ \ \ \ \ +(\nabla\cdot{\bf
w}\phi_{h},Q_{h}p-p_{h})+(Q_{h}(\nabla\phi\cdot{\bf
w}),Q_{h}p-p_{h})\vspace{2mm}\\
& &\ =(S^{-1}({\bf u}-{\bf u}_{h}),\Pi_{h}{\bf z}-{\bf
z})-(\nabla\cdot({\bf u}-{\bf u}_{h}),\phi-\phi_{h})-(S^{-1}({\bf
u}-{\bf
u}_{h})\cdot{\bf w},\phi_{h}-\phi)\vspace{2mm}\\
& &\ \ \ \ \ -(S^{-1}({\bf u}-{\bf u}_{h})\cdot{\bf
w},\phi)+(\nabla\cdot{\bf
w}\phi_{h},Q_{h}p-p_{h})+(Q_{h}(\nabla\phi\cdot{\bf
w}),Q_{h}p-p_{h}).
\end{array}
\end{equation}

Recall the postprocessed technique developed by {\it Vohral\'{\i}k}
in \cite{Vohralik1}, where a
postprocessed approximation $\tilde{p}_{h}$ to the displacement $p$ is constructed
such that
$
-S_{K}\nabla\tilde{p}_{h}|_{K}={\bf u}_{h}$
and $
\frac{1}{|K|}\int_{K}\tilde{p}_{h}d{\bf x}=p_h|_{K}$  for all $ K\in\mathcal{T}_{h}.$
Then, from ${\bf w}\in
RT_{0}(\mathcal{T}_{h})$, we have
\begin{equation}\label{equation11}
\begin{array}{lll}
& &\displaystyle-(S^{-1}({\bf u}-{\bf u}_{h})\cdot{\bf
w},\phi)=\sum\limits_{K\in\mathcal{T}_{h}}\int_{K}\nabla(p-\tilde{p}_{h})\cdot{\bf
w}\phi\vspace{2mm}\\
& &\ \ \
\displaystyle=\sum\limits_{K\in\mathcal{T}_{h}}\int_{K}\nabla\cdot((p-\tilde{p}_{h}){\bf
w})\phi-\nabla\cdot{\bf w}(p-\tilde{p}_{h})\phi\vspace{2mm}\\
& &\ \ \
\displaystyle=\sum\limits_{K\in\mathcal{T}_{h}}\int_{K}-\nabla\phi\cdot{\bf
w}(p-\tilde{p}_{h})+\int_{\partial K}(p-\tilde{p}_{h}){\bf
w}\cdot{\bf n}\phi
-(\nabla\cdot{\bf w}\phi,p-\tilde{p}_{h})\vspace{2mm}\\
& &\ \ \ \displaystyle=-(\nabla\phi\cdot{\bf
w},p-\tilde{p}_{h})-(\nabla\cdot{\bf w}\phi,p-\tilde{p}_{h})-
\sum\limits_{\sigma\in\varepsilon_{h}^{0}}\int_{\sigma}[\tilde{p}_{h}]{\bf
w}\cdot{\bf n}\phi.
\end{array}
\end{equation}

Notice that it holds
\begin{equation}\label{equation12}
(Q_{h}(\nabla\phi\cdot{\bf
w}),Q_{h}p-p_{h})=(Q_{h}(\nabla\phi\cdot{\bf
w}),p-p_{h})=(Q_{h}(\nabla\phi\cdot{\bf w}),p-\tilde{p}_{h})
\end{equation}
and
\begin{equation}\label{equation13}
(\nabla\cdot{\bf w}\phi_{h},Q_{h}p-p_{h})=(\nabla\cdot{\bf
w}\phi_{h},p-p_{h})=(\nabla\cdot{\bf w}\phi_{h},p-\tilde{p}_{h}).
\end{equation}
For convenience, denote
\begin{equation*}
\begin{array}{lll}
& &I_{1} :=(S^{-1}({\bf u}-{\bf u}_{h}),\Pi_{h}{\bf z}-{\bf z}),\ \
I_{2} :=-(\nabla\cdot({\bf u}-{\bf
u}_{h}),\phi-\phi_{h}),\vspace{2mm}\\
& &I_{3} :=-(S^{-1}({\bf u}-{\bf u}_{h})\cdot{\bf
w},\phi_{h}-\phi),\ \ I_{4} :=-(\nabla\phi\cdot{\bf
w}-Q_{h}(\nabla\phi\cdot{\bf
w}),p-\tilde{p}_{h}),\vspace{2mm}\\
& &\displaystyle I_{5} :=-(\nabla\cdot{\bf
w}(\phi-\phi_{h}),p-\tilde{p}_{h}),\ \ I_{6}
:=-\sum\limits_{\sigma\in\varepsilon_{h}^{0}}\int_{\sigma}[\tilde{p}_{h}]{\bf
w}\cdot{\bf n}\phi.
\end{array}
\end{equation*}
From (\ref{equation10})-(\ref{equation13})  we arrive at
\begin{equation}\label{equation14}
\begin{array}{lll}
||Q_{h}p-p_{h}||^{2}&=&\displaystyle\sum\limits_{i=1}^{4}I_{i}-(\nabla\cdot{\bf
w}\phi,p-\tilde{p}_{h})+(\nabla\cdot{\bf
w}\phi_{h},Q_{h}p-p_{h})+I_{6}\vspace{2mm}\\
&=&\displaystyle\sum\limits_{i=1}^{4}I_{i}-(\nabla\cdot{\bf
w}(\phi-\phi_{h}),p-\tilde{p}_{h})-(\nabla\cdot{\bf
w}\phi_{h},p-\tilde{p}_{h})\vspace{2mm}\\
& &\ \ +(\nabla\cdot{\bf
w}\phi_{h},Q_{h}p-p_{h})+I_{6}=\displaystyle\sum\limits_{i=1}^{6}I_{i}.
\end{array}
\end{equation}

In what follows, we separately estimate $I_{i},i=1,...,6$.
\begin{lemma}\label{Lemma1}
Denote $||h||_{L^{\infty}(\Omega)}$ the maximum norm of the
mesh-size function $h$ with respect to $\mathcal{T}_{h}$, $e_{h}$
the error defined in (\ref{error1}). Then it holds
\begin{equation}\label{equation15}
I_{1}\lesssim ||h||_{L^{\infty}(\Omega)}\ e_{h}||Q_{h}p-p_{h}||.
\end{equation}
\end{lemma}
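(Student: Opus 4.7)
The plan is to bound $I_1$ by splitting into a Cauchy--Schwarz estimate on each element $K\in\mathcal{T}_{h}$, using the interpolation bound (\ref{equation6}) for $\Pi_h\mathbf{z}-\mathbf{z}$ to produce a factor of $h_K$, and then invoking the $H^2$-regularity (\ref{equation5}) for $\phi$ to turn $|\mathbf{z}|_{H^1(\bigcup\mathcal{T}_h)}$ into $\|Q_h p-p_h\|$.

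First I would write $I_1=\sum_{K\in\mathcal T_h}\int_K S^{-1/2}(\mathbf{u}-\mathbf{u}_h)\cdot S^{-1/2}(\Pi_h\mathbf{z}-\mathbf{z})\,d\mathbf x$ and apply the Cauchy--Schwarz inequality elementwise, obtaining
\begin{equation*}
I_1\le \sum_{K\in\mathcal T_h}\|S^{-1/2}(\mathbf{u}-\mathbf{u}_h)\|_K\,\|S^{-1/2}(\Pi_h\mathbf{z}-\mathbf{z})\|_K.
\end{equation*}
By assumption (D1), $\|S^{-1/2}(\Pi_h\mathbf{z}-\mathbf{z})\|_K\lesssim \|\Pi_h\mathbf{z}-\mathbf{z}\|_K$ with a constant depending only on $c_{S,K}$.

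Next I would use (\ref{equation6}) locally, namely $\|\Pi_h\mathbf{z}-\mathbf{z}\|_K\lesssim h_K|\mathbf{z}|_{H^1(K)}\le \|h\|_{L^\infty(\Omega)}|\mathbf{z}|_{H^1(K)}$, and then apply the discrete Cauchy--Schwarz inequality over $K\in\mathcal T_h$ to obtain
\begin{equation*}
I_1\lesssim \|h\|_{L^\infty(\Omega)}\Bigl(\sum_{K\in\mathcal T_h}\|S^{-1/2}(\mathbf{u}-\mathbf{u}_h)\|_K^2\Bigr)^{1/2}|\mathbf{z}|_{H^1(\bigcup\mathcal T_h)}.
\end{equation*}
The first factor is bounded by $e_h$ in view of the definition (\ref{error1}), since $c_{\mathbf w,r,K}\ge 0$.

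Finally, since $\mathbf{z}=S\nabla\phi$ and $S$ is piecewise constant by (D1), we have $|\mathbf{z}|_{H^1(\bigcup\mathcal T_h)}\lesssim \|\phi\|_{H^2(\bigcup\mathcal T_h)}$ with a constant depending on $\max_K C_{S,K}$, and the regularity estimate (\ref{equation5}) then gives $\|\phi\|_{H^2(\bigcup\mathcal T_h)}\lesssim \|Q_h p-p_h\|$. Combining these bounds yields (\ref{equation15}).

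The main obstacle is essentially bookkeeping: one must verify that the implicit constants genuinely depend only on the shape regularity of $\mathcal T_h$ and on the fixed data bounds $c_{S,K}$, $C_{S,K}$ from the initial triangulation $\mathcal T_0$, so that the estimate is uniform as the mesh is refined. The assumption that $S$ is piecewise constant on $\mathcal T_0\subseteq\mathcal T_h$ is crucial here, since it both makes $|S\nabla\phi|_{H^1(K)}$ directly comparable to $|\phi|_{H^2(K)}$ and justifies using the piecewise $H^2$-regularity (\ref{equation5}) rather than a global one. Beyond this verification, the argument is a routine combination of Cauchy--Schwarz, the commuting-diagram interpolation estimate, and elliptic regularity for the adjoint problem.
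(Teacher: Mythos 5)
Your argument is correct and follows essentially the same route as the paper: elementwise Cauchy--Schwarz on $(S^{-1}(\mathbf{u}-\mathbf{u}_h),\Pi_h\mathbf{z}-\mathbf{z})_K$, the interpolation estimate (\ref{equation6}) to extract the factor $\|h\|_{L^\infty(\Omega)}$, and the regularity bound (\ref{equation5}) to replace $|\mathbf{z}|_{H^1(\bigcup\mathcal{T}_h)}$ by $\|Q_hp-p_h\|$. Your additional remarks on the dependence of the constants on $c_{S,K}$, $C_{S,K}$ and on the piecewise constancy of $S$ only make explicit what the paper leaves implicit.
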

\begin{proof}
From (\ref{equation6}) and (\ref{equation5}) it follows
\begin{equation}\label{equation16}
\begin{array}{lll}
I_{1}&=&\displaystyle\sum\limits_{K\in\mathcal{T}_{h}}(S^{-1}({\bf
u}-{\bf u}_{h}),\Pi_{h}{\bf z}-{\bf
z})_{K}\vspace{2mm}\\
&\lesssim&\displaystyle\sum\limits_{K\in\mathcal{T}_{h}}||S^{-1/2}({\bf
u}-{\bf u}_{h})||_{K}||\Pi_{h}{\bf z}-{\bf z}||_{K}\vspace{2mm}\\
&\lesssim&||h||_{L^{\infty}(\Omega)}e_{h}|{\bf
z}|_{H^{1}(\bigcup\mathcal{T}_{h})}\vspace{2mm}\\
&\lesssim&  ||h||_{L^{\infty}(\Omega)}\ e_{h} ||Q_{h}p-p_{h}||.
\end{array}
\end{equation}
\end{proof}
\begin{lemma}\label{Lemma2} It holds
\begin{equation}\label{equation17}
I_{2}\lesssim(||h||_{L^{\infty}(\Omega)}e_{h}+{\rm
osc}_{h})||Q_{h}p-p_{h}||.
\end{equation}
\end{lemma}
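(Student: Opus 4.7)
The plan is to exploit two orthogonalities: the $L^{2}$-orthogonality $(\phi-\phi_{h})\perp P_{0}(\mathcal{T}_{h})$ induced by $\phi_{h}=Q_{h}\phi$, and the mean-zero property of the elementwise residual $R_{K}$ forced by the discrete equation (\ref{equation3}). Combined with the standard $L^{2}$-approximation bound $\|\phi-\phi_{h}\|_{K}\lesssim h_{K}\|\nabla\phi\|_{K}$ and the $H^{1}$-regularity estimate (\ref{equation5+}), these facts will produce the two terms on the right-hand side of (\ref{equation17}).

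First I would note that the continuous equation (\ref{equation0}), in strong form, reads $\nabla\cdot{\bf u}=f+S^{-1}{\bf u}\cdot{\bf w}-(r+\nabla\cdot{\bf w})p$; adding and subtracting the matching discrete quantities yields the elementwise identity
\[\nabla\cdot({\bf u}-{\bf u}_{h})=R_{K}+S^{-1}({\bf u}-{\bf u}_{h})\cdot{\bf w}-(r+\nabla\cdot{\bf w})(p-p_{h}).\]
Testing against $\phi-\phi_{h}$ therefore splits $I_{2}$ into a residual piece, a convection piece and a reaction piece.

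For the residual piece, taking $\varphi_{h}=\chi_{K}$ in (\ref{equation3}) forces $\int_{K}R_{K}\,dx=0$, hence $\bar R_{K}=0$; Cauchy--Schwarz with $\|\phi-\phi_{h}\|_{K}\lesssim h_{K}\|\nabla\phi\|_{K}$ then gives $|(R,\phi-\phi_{h})|\le\sum_{K}h_{K}\|R_{K}-\bar R_{K}\|_{K}\|\nabla\phi\|_{K}\le{\rm osc}_{h}\|\nabla\phi\|$. The convection piece is handled by direct Cauchy--Schwarz using $|{\bf w}|_{K}\le C_{{\bf w},K}$ from (D2), absorbing the data-dependent factor $C_{{\bf w},K}/\sqrt{c_{S,K}}$ into the hidden constant, which produces a multiple of $\|h\|_{L^{\infty}(\Omega)}e_{h}\|\nabla\phi\|$. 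For the reaction piece I would apply elementwise Cauchy--Schwarz and convert $C_{{\bf w},r,K}\|p-p_{h}\|_{K}$ into $\bigl(C_{{\bf w},r,K}/\sqrt{c_{{\bf w},r,K}}\bigr)\mathcal{E}_{K}$, invoking (D6) -- which forces $C_{{\bf w},r,K}=0$ whenever $c_{{\bf w},r,K}=0$ -- so this ratio is a well-defined bounded data constant; the outcome is again a multiple of $\|h\|_{L^{\infty}(\Omega)}e_{h}\|\nabla\phi\|$. Summing the three bounds and invoking (\ref{equation5+}) to replace $\|\nabla\phi\|$ by $\|Q_{h}p-p_{h}\|$ yields (\ref{equation17}).

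I expect the main obstacle to be the reaction piece: the natural Cauchy--Schwarz bound carries the possibly large weight $C_{{\bf w},r,K}$, whereas the energy norm $e_{h}$ only controls $p-p_{h}$ with the smaller weight $\sqrt{c_{{\bf w},r,K}}$. Reconciling the two weights relies critically on the structural compatibility conditions (D4)--(D6), which is precisely what keeps the implicit constants uniform across the full spectrum from diffusion-dominated to convection- or reaction-dominated regimes that the paper wants to cover.
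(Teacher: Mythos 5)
Your proof is correct and follows essentially the same route as the paper: the elementwise identity $\nabla\cdot({\bf u}-{\bf u}_h)=R_K+S^{-1}({\bf u}-{\bf u}_h)\cdot{\bf w}-(r+\nabla\cdot{\bf w})(p-p_h)$, the replacement of $R_K$ by $R_K-\bar R_K$ via the piecewise-constant test functions, the bound $\|\phi-\phi_h\|_K\lesssim h_K\|\nabla\phi\|_K$, the absorption of the weights $C_{{\bf w},K}/\sqrt{c_{S,K}}$ and $C_{{\bf w},r,K}/\sqrt{c_{{\bf w},r,K}}$ (using (D6)) into the constant, and finally (\ref{equation5+}). The paper merely packages your three pieces as the intermediate duality estimate (\ref{equation20}) on $\|\nabla\cdot({\bf u}-{\bf u}_h)\|_K$ before pairing with $\phi-\phi_h$.
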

\begin{proof}
Notice that (\ref{equation0}) can be equivalently written as:
\begin{equation}\label{equation18}
(\nabla\cdot{\bf u},\varphi)_{K}-(S^{-1}{\bf u}\cdot{\bf
w},\varphi)_{K}+((r+\nabla\cdot{\bf
w})p,\varphi)_{K}=(f,\varphi)_{K}\ \ \ {\rm for\ all}\ \ \varphi\in
L^{2}(K), K\in\mathcal{T}_{h}.
\end{equation}
Meanwhile, the relation
(\ref{equation3}) can be equivalently written as:
\begin{equation}\label{equation19}
(\nabla\cdot{\bf u}_{h},\varphi)_{K}-(S^{-1}{\bf u}_{h}\cdot{\bf
w},\varphi)_{K}+((r+\nabla\cdot{\bf
w})p_{h},\varphi)_{K}=(f,\varphi)_{K}\ \ \ {\rm for\ all}\ \
\varphi\in P_{0}(K), K\in\mathcal{T}_{h}.
\end{equation}

 For arbitrary $\varphi\in
L^{2}(K)$, let $\bar\varphi_{K}$ denote the mean of $\varphi$ over
$K\in\mathcal{T}_{h}$. A combination of (\ref{equation18}) and
(\ref{equation19}) yields
\begin{equation*}
\begin{array}{lll}
&\ &(\nabla\cdot({\bf u}-{\bf u}_{h}),\varphi)_{K}=(\nabla\cdot{\bf
u},\varphi)_{K}-(\nabla\cdot{\bf
u}_{h},\varphi-\bar\varphi_{K})_{K}-(\nabla\cdot{\bf
u}_{h},\bar\varphi_{K})_{K}\vspace{2mm}\\
&=&(R_{K},\varphi-\bar\varphi_{K})_{K}+(S^{-1}({\bf u}-{\bf
u}_{h})\cdot{\bf w},\varphi)_{K}-((r+\nabla\cdot{\bf
w})(p-p_{h}),\varphi)_{K}\vspace{2mm}\\
&=&(R_{K}-\bar{R}_{K},\varphi-\bar\varphi_{K})_{K}+(S^{-1}({\bf u}-{\bf
u}_{h})\cdot{\bf w},\varphi)_{K}-((r+\nabla\cdot{\bf
w})(p-p_{h}),\varphi)_{K}\vspace{2mm}\\
&\leq&(||R_{K}-\bar{R}_{K}||_{K}+||S^{-1}({\bf u}-{\bf
u}_{h})||_{K}||{\bf w}||_{L^{\infty}(K)}+C_{{\bf
w},r,K}||p-p_{h}||_{K})||\varphi||_{K}.
\end{array}
\end{equation*}
Here we recall that $\bar{R}_{K}$ is the mean   of the elementwise residual
$R_{K}$ over each $K\in\mathcal{T}_{h}$.
The above inequality indicates
\begin{equation}\label{equation20}
\begin{array}{lll}
||\nabla\cdot({\bf u}-{\bf u}_{h})||_{K}&=&\displaystyle\sup_{\varphi\in
L^{2}(K),\varphi\neq0}\frac{(\nabla\cdot({\bf u}-{\bf
u}_{h}),\varphi)_{K}}{||\varphi||_{L^{2}(K)}}\vspace{2mm}\\
&\lesssim&||R_{K}-\bar{R}_{K}||_{K}+\mathcal{E}_{K}.
\end{array}
\end{equation}
Then it follows
\begin{equation}\label{equation21}
\begin{array}{lll}
I_{2}&=&\displaystyle-\sum\limits_{K\in\mathcal{T}_{h}}(\nabla\cdot({\bf
u}-{\bf
u}_{h}),\phi-\phi_{h})\vspace{2mm}\\
&\lesssim&\displaystyle\sum\limits_{K\in\mathcal{T}_{h}}||\nabla\cdot({\bf
u}-{\bf u}_{h})||_{K}h_{K}||\nabla\phi||_{K}\vspace{2mm}\\
&\lesssim&(||h||_{L^{\infty}(\Omega)}\ e_{h}+{\rm
osc}_{h})||\nabla\phi||.
\end{array}
\end{equation}
The desired result (\ref{equation17}) follows from
(\ref{equation21}) and (\ref{equation5+}).
\end{proof}
\begin{lemma}\label{Lemma3} It holds
\begin{equation}\label{equation22}
I_{3}\lesssim ||h||_{L^{\infty}(\Omega)}\ e_{h}||Q_{h}p-p_{h}||.
\end{equation}
\end{lemma}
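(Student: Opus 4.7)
The plan is to bound $I_3$ by a straightforward sequence of Cauchy--Schwarz estimates, exploiting that $\phi_h$ is the $L^2$-projection of $\phi$ onto piecewise constants and that $\mathbf{w}$ and $S^{-1}$ are essentially bounded under assumptions (D1) and (D2). In spirit this lemma is easier than Lemma \ref{Lemma2}: no divergence of $\mathbf{u}-\mathbf{u}_h$ appears, so there is no need to expand via the equation and pick up a residual/oscillation term. The single small power of $h$ that we need will come from the first-order approximation property of the piecewise-constant projection applied to the smooth dual variable $\phi$.

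First I would rewrite the inner product in $I_3$ as a sum over the elements $K\in\mathcal{T}_h$ and apply Cauchy--Schwarz on each $K$:
\begin{equation*}
|I_3|\;\le\;\sum_{K\in\mathcal{T}_h}\|S^{-1/2}(\mathbf{u}-\mathbf{u}_h)\|_K\,\|S^{-1/2}\mathbf{w}\|_{L^\infty(K)}\,\|\phi-\phi_h\|_K.
\end{equation*}
By assumptions (D1)--(D2) the factor $\|S^{-1/2}\mathbf{w}\|_{L^\infty(K)}$ is bounded by a constant depending only on the initial data (through $C_{\mathbf{w},K}$ and $c_{S,K}$ on $\mathcal{T}_0$), so it may be absorbed in the $\lesssim$ constant. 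Next, since $\phi_h=Q_h\phi$ is the $L^2$-projection of $\phi$ onto $P_0(\mathcal{T}_h)$, the standard approximation estimate gives
\begin{equation*}
\|\phi-\phi_h\|_K\;\lesssim\;h_K\|\nabla\phi\|_K\;\le\;\|h\|_{L^\infty(\Omega)}\|\nabla\phi\|_K.
\end{equation*}

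Then I would pull $\|h\|_{L^\infty(\Omega)}$ out of the sum and apply a global Cauchy--Schwarz over $K$, using $\|S^{-1/2}(\mathbf{u}-\mathbf{u}_h)\|_K\le\mathcal{E}_K$ from (\ref{error1}):
\begin{equation*}
|I_3|\;\lesssim\;\|h\|_{L^\infty(\Omega)}\Bigl(\sum_{K}\mathcal{E}_K^2\Bigr)^{1/2}\Bigl(\sum_{K}\|\nabla\phi\|_K^2\Bigr)^{1/2}\;\le\;\|h\|_{L^\infty(\Omega)}\,e_h\,\|\nabla\phi\|.
\end{equation*}
Finally the $H^1$-regularity estimate (\ref{equation5+}) for the dual problem gives $\|\nabla\phi\|\lesssim\|Q_hp-p_h\|$, yielding the desired bound (\ref{equation22}).

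I do not expect any genuine obstacle here; the only mild bookkeeping point is to make sure the constants hidden by $\lesssim$ absorb $\|S^{-1/2}\mathbf{w}\|_{L^\infty}$ uniformly, which is guaranteed by the data assumptions on the initial mesh $\mathcal{T}_0$ together with the fact that every refined element $K\in\mathcal{T}_h$ sits inside some element of $\mathcal{T}_0$ where these bounds are fixed.
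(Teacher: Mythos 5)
Your proof is correct and follows essentially the same route as the paper: elementwise Cauchy--Schwarz absorbing the bounded factor involving $S^{-1}$ and ${\bf w}$, the first-order approximation bound $\|\phi-\phi_h\|_K\lesssim h_K\|\nabla\phi\|_K$ for the piecewise-constant projection, a global Cauchy--Schwarz to produce $\|h\|_{L^\infty(\Omega)}e_h\|\phi\|_{H^1}$, and finally the regularity estimate (\ref{equation5+}).
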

\begin{proof}
By noticing
\begin{equation}\label{equation23}
\begin{array}{lll}
I_{3}&=&\displaystyle-\sum\limits_{K\in\mathcal{T}_{h}}(S^{-1}({\bf
u}-{\bf u}_{h})\cdot{\bf w},\phi_{h}-\phi)_{K}\vspace{2mm}\\
&\lesssim&\displaystyle\sum\limits_{K\in\mathcal{T}_{h}}||S^{-1/2}({\bf
u}-{\bf u}_{h})||_{K}h_{K}||\nabla\phi||_{K}\vspace{2mm}\\
&\leq&||h||_{L^{\infty}(\Omega)}\ e_{h}||\phi||_{H^{1}}
\end{array}
\end{equation}
the desired result (\ref{equation22}) follows from
(\ref{equation23}) and  the regularity estimate (\ref{equation5+}).
\end{proof}
\begin{lemma}\label{Lemma4} It holds
\begin{equation}\label{equation24}
I_{4}\lesssim ||h||_{L^{\infty}(\Omega)}\ e_{h}||Q_{h}p-p_{h}||.
\end{equation}
\end{lemma}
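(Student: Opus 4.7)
The plan is to exploit the $L^2$-orthogonality of the projection $Q_h$ on each element, together with the postprocessing identity $Q_h\tilde p_h = p_h$, to convert $I_4$ into a sum in which \emph{both} factors are of ``super-approximation'' type.

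First, since $\nabla\phi\cdot{\bf w}-Q_h(\nabla\phi\cdot{\bf w})$ has zero mean on every $K\in\mathcal{T}_h$, it is $L^2(K)$-orthogonal to piecewise constants. The postprocessing property gives $Q_h(p-\tilde p_h)|_K=(Q_h p-p_h)|_K$, which is constant on $K$. Subtracting this constant inside the inner product yields
\begin{equation*}
I_4=-\sum_{K\in\mathcal{T}_h}\bigl(\nabla\phi\cdot{\bf w}-Q_h(\nabla\phi\cdot{\bf w}),\,(p-\tilde p_h)-Q_h(p-\tilde p_h)\bigr)_K.
\end{equation*}
Applying Cauchy--Schwarz element by element, both factors can now be controlled by the Poincar\'e/approximation estimate $\|v-Q_h v\|_K\lesssim h_K\|\nabla v\|_K$.

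For the first factor, since $\mathcal{T}_h$ is a refinement of $\mathcal{T}_0$ and ${\bf w}\in RT_0(\mathcal{T}_0)$, on each $K$ the field ${\bf w}$ has the form ${\bf a}+b{\bf x}$, so both $|{\bf w}|_K|$ and $|\nabla{\bf w}|_K|$ are bounded by data-dependent constants ($C_{{\bf w},K}$ and $|\nabla\cdot{\bf w}|_K|/d$, respectively). Hence
\begin{equation*}
\|\nabla\phi\cdot{\bf w}-Q_h(\nabla\phi\cdot{\bf w})\|_K\lesssim h_K\|\nabla(\nabla\phi\cdot{\bf w})\|_K\lesssim h_K\bigl(\|\phi\|_{H^2(K)}+\|\nabla\phi\|_K\bigr).
\end{equation*}
For the second factor, assumption $(D1)$ makes $S|_K$ constant on every $K\in\mathcal{T}_h$, so $-S\nabla p=\mathbf{u}$ and $-S\nabla\tilde p_h=\mathbf{u}_h$ pointwise on $K$, giving $\nabla(p-\tilde p_h)=-S^{-1}(\mathbf{u}-\mathbf{u}_h)$. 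Combined with the spectral bound $S^{-1}\le c_{S,K}^{-1}I$, this yields
\begin{equation*}
\|(p-\tilde p_h)-Q_h(p-\tilde p_h)\|_K\lesssim h_K\|\nabla(p-\tilde p_h)\|_K\lesssim h_K\,c_{S,K}^{-1/2}\|S^{-1/2}(\mathbf{u}-\mathbf{u}_h)\|_K.
\end{equation*}

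Multiplying the two bounds, summing over $K$ by Cauchy--Schwarz, and invoking the regularity estimates (\ref{equation5+}) and (\ref{equation5}) to obtain $\|\phi\|_{H^2(\bigcup\mathcal{T}_h)}+\|\phi\|_{H^1}\lesssim\|Q_h p-p_h\|$, together with the definition $e_h$ from (\ref{error1}), yield
\begin{equation*}
|I_4|\lesssim \|h\|_{L^\infty(\Omega)}^{2}\,e_h\,\|Q_h p-p_h\|\lesssim \|h\|_{L^\infty(\Omega)}\,e_h\,\|Q_h p-p_h\|,
\end{equation*}
where one factor of $\|h\|_{L^\infty(\Omega)}$ is absorbed into the hidden constant using $\|h\|_{L^\infty(\Omega)}\le\|h_0\|_{L^\infty(\Omega)}$ (or $\le \mathrm{diam}(\Omega)$). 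The main obstacle is identifying the correct piecewise constant to subtract in the second factor: because $\tilde p_h$ is only a postprocessing of $p_h$ rather than an $L^2$-projection of $p$, $\|p-\tilde p_h\|_K$ by itself carries no intrinsic $h$-factor, and only the choice $Q_h(p-\tilde p_h)=Q_h p-p_h$ -- enabled by the identity $Q_h\tilde p_h=p_h$ -- lets both factors pick up a full power of $h_K$.
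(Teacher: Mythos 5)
Your proof is correct, and in the second factor it takes a genuinely different route from the paper. Both arguments begin identically: Cauchy--Schwarz on each $K$ and the Poincar\'e estimate $\|\nabla\phi\cdot{\bf w}-Q_{h}(\nabla\phi\cdot{\bf w})\|_{K}\lesssim h_{K}\|\nabla(\nabla\phi\cdot{\bf w})\|_{K}\lesssim h_{K}\|\phi\|_{H^{2}(K)}$ for the first factor. For the second factor the paper does \emph{not} subtract a constant: it simply writes $\|p-\tilde{p}_{h}\|_{K}\leq\|p-p_{h}\|_{K}+\|p_{h}-\tilde{p}_{h}\|_{K}$, bounds $\|p_{h}-\tilde{p}_{h}\|_{K}\lesssim h_{K}\|S^{-1}{\bf u}_{h}\|_{K}$, and then invokes the local efficiency estimate (\ref{equation25}) together with (\ref{equation27}) to get $\|p-\tilde{p}_{h}\|_{K}\lesssim\mathcal{E}_{K}$ with no extra power of $h_{K}$; a single factor $\|h\|_{L^{\infty}(\Omega)}$ then comes from the first factor alone. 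You instead exploit the mean-zero property of $\nabla\phi\cdot{\bf w}-Q_{h}(\nabla\phi\cdot{\bf w})$ and the identity $Q_{h}\tilde{p}_{h}=p_{h}$ to replace $p-\tilde{p}_{h}$ by $(p-\tilde{p}_{h})-Q_{h}(p-\tilde{p}_{h})$, which via Poincar\'e and $\nabla(p-\tilde{p}_{h})=-S^{-1}({\bf u}-{\bf u}_{h})$ yields the bound $h_{K}c_{S,K}^{-1/2}\|S^{-1/2}({\bf u}-{\bf u}_{h})\|_{K}$. This buys two things: an extra factor of $\|h\|_{L^{\infty}(\Omega)}$ (which you then harmlessly discard by absorbing it into the constant), and, more substantively, independence from the efficiency estimate (\ref{equation25}) and from the step $\|p-p_{h}\|_{K}\lesssim\mathcal{E}_{K}$ in (\ref{equation27}) — the latter implicitly carries a factor $c_{{\bf w},r,K}^{-1/2}$ since the displacement enters $\mathcal{E}_{K}$ only with the weight $c_{{\bf w},r,K}$, whereas your bound uses only the stress part of $\mathcal{E}_{K}$. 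The price is reliance on the exact identity $Q_{h}(p-\tilde{p}_{h})|_{K}=(Q_{h}p-p_{h})|_{K}$, which you correctly justify from the postprocessing construction. Both proofs are valid; yours is, if anything, slightly sharper.
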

\begin{proof}
Recall a local efficiency estimate of $h_{K}||S^{-1}{\bf
u}_{h}||_{K}$ as following (see Lemma 7.3 in \cite{D1,D2}):
\begin{equation}\label{equation25}
h_{K}||S^{-1}{\bf u}_{h}||_{K}\lesssim\mathcal{E}_{K}.
\end{equation}

By   triangle
inequality we obtain
\begin{equation}\label{equation26}
\begin{array}{lll}
I_{4}&=&\displaystyle-\sum\limits_{K\in\mathcal{T}_{h}}(\nabla\phi\cdot{\bf
w}-Q_{h}(\nabla\phi\cdot{\bf
w}),p-\tilde{p}_{h})_{K}\vspace{2mm}\\
&\lesssim&\displaystyle\sum\limits_{K\in\mathcal{T}_{h}}h_{K}||\nabla(\nabla\phi\cdot{\bf
w})||_{K}(||p-p_{h}||_{K}+||p_{h}-\tilde{p}_{h}||_{K})_{K}\vspace{2mm}\\
&\lesssim&\displaystyle\sum\limits_{K\in\mathcal{T}_{h}}h_{K}||\phi||_{H^{2}(K)}(||p-p_{h}||_{K}
+h_{K}||S^{-1}{\bf u}_{h}||_{K}).
\end{array}
\end{equation}
From (\ref{equation25}) it holds
\begin{equation}\label{equation27}
||p-p_{h}||_{K}+h_{K}||S^{-1}{\bf
u}_{h}||_{K}\lesssim\mathcal{E}_{K}.
\end{equation}

According to (\ref{equation26}) and (\ref{equation27}), we arrive at
\begin{equation*}
\displaystyle
I_{4}\lesssim\sum\limits_{K\in\mathcal{T}_{h}}h_{K}||\phi||_{H^{2}(K)}\mathcal{E}_{K},
\end{equation*}
which, together with  the regularity estimate (\ref{equation5}) of $\phi$, implies the  result (\ref{equation24}).

\end{proof}
\begin{lemma}\label{Lemma5}
It holds
\begin{equation}\label{equation28}
I_{5}\lesssim e_{h} ||h||_{L^{\infty}(\Omega)}||Q_{h}p-p_{h}||.
\end{equation}
\end{lemma}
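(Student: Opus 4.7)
My plan is to mimic the elementwise Cauchy--Schwarz approach used in Lemmas \ref{Lemma3} and \ref{Lemma4}, exploiting the fact that $I_5$ has exactly the same structural form as $I_4$: a product of a coefficient times $(\phi-\phi_h)$ tested against $(p-\tilde p_h)$. The key ingredients available are (i) the $L^2$-approximation property of $\phi_h$, (ii) the postprocessing inequality $\|p_h-\tilde p_h\|_K\lesssim h_K\|S^{-1}{\bf u}_h\|_K$ built into Vohral\'ik's construction, (iii) the local efficiency bound (\ref{equation25}) and the consequent estimate (\ref{equation27}), and (iv) the regularity bound (\ref{equation5+}).

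Concretely, I would proceed in the following order. First, split $I_5$ elementwise and apply Cauchy--Schwarz,
\begin{equation*}
|I_5|\le \sum_{K\in\mathcal{T}_h}|\nabla\cdot{\bf w}|_K|\;\|\phi-\phi_h\|_K\;\|p-\tilde p_h\|_K,
\end{equation*}
absorbing $|\nabla\cdot{\bf w}|_K|$ into the hidden constant, which is legitimate because assumption (D2) guarantees ${\bf w}\in RT_0(\mathcal{T}_0)$, so $\nabla\cdot{\bf w}$ is piecewise constant on the fixed initial mesh and uniformly bounded. Second, bound the first factor by the standard $L^2$-projection estimate $\|\phi-\phi_h\|_K\lesssim h_K\|\nabla\phi\|_K$. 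Third, use the triangle inequality together with the postprocessing property of $\tilde p_h$ to write
\begin{equation*}
\|p-\tilde p_h\|_K\le\|p-p_h\|_K+\|p_h-\tilde p_h\|_K\lesssim\|p-p_h\|_K+h_K\|S^{-1}{\bf u}_h\|_K,
\end{equation*}
and then invoke (\ref{equation27}) to majorize this by $\mathcal{E}_K$. Fourth, apply discrete Cauchy--Schwarz across elements and pull out $\|h\|_{L^\infty(\Omega)}$ to obtain $|I_5|\lesssim\|h\|_{L^\infty(\Omega)}\,\|\nabla\phi\|\,e_h$. Finally, conclude via the $H^1$-regularity estimate (\ref{equation5+}) applied to $\phi$.

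The only delicate point I expect is verifying that (\ref{equation27}) is uniformly applicable when $c_{{\bf w},r,K}$ is small or zero, since $\|p-p_h\|_K$ is controlled by $\mathcal{E}_K$ only up to a factor involving $c_{{\bf w},r,K}^{-1/2}$. However, assumption (D6) combined with (D4) forces $\nabla\cdot{\bf w}|_K=0$ and $r_K=0$ precisely on elements where $c_{{\bf w},r,K}=0$, so the integrand in $I_5$ vanishes identically on those elements and the sum effectively restricts to elements where (\ref{equation27}) can be used with a bounded constant. Beyond this bookkeeping, no further obstacle arises, and the proof should be only a few lines shorter than Lemma \ref{Lemma4} because here the regularity invocation is the simpler $H^1$ bound (\ref{equation5+}) rather than $H^2$.
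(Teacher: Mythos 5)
Your argument is correct and follows essentially the same route as the paper's proof: elementwise Cauchy--Schwarz absorbing $\nabla\cdot{\bf w}$ into the constant, the projection bound $\|\phi-\phi_h\|_K\lesssim h_K\|\nabla\phi\|_K$, the triangle inequality with the postprocessing estimate and (\ref{equation27}), and finally the regularity bound (\ref{equation5+}). Your closing remark about elements with $c_{{\bf w},r,K}=0$ is a legitimate extra precaution that the paper leaves implicit, and your resolution via (D4) and (D6) is the right one.
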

\begin{proof}
From (\ref{equation27}) it follows
\begin{equation*}
\begin{array}{lll}
I_{5}&=&\displaystyle-\sum\limits_{K\in\mathcal{T}_{h}}(\nabla\cdot{\bf
w}(\phi-\phi_{h}),p-\tilde{p}_{h})_{K}\vspace{2mm}\\
&\lesssim&\displaystyle\sum\limits_{K\in\mathcal{T}_{h}}||\phi-\phi_{h}||_{K}(||p-p_{h}||_{K}+||p_{h}-\tilde{p}_{h}||_{K})\vspace{2mm}\\
&\lesssim&\displaystyle\sum\limits_{K\in\mathcal{T}_{h}}h_{K}||\nabla\phi||_{K}(||p-p_{h}||_{K}+h_{K}||S^{-1}{\bf
u}_{h}||_{K})\vspace{2mm}\\
&\lesssim&\displaystyle e_{h}||h||_{L^{\infty}(\Omega)}||\phi||_{H^{1}},
\end{array}
\end{equation*}
which,  together with  (\ref{equation5+}),  yields the estimate (\ref{equation28}).
\end{proof}
\begin{lemma}\label{Lemma6}
It holds
\begin{equation}\label{equation29}
I_{6}\lesssim e_{h}||h||_{L^{\infty}(\Omega)}^{1/2}||Q_{h}p-p_{h}||.
\end{equation}
\end{lemma}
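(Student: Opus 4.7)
The estimation of $I_6=-\sum_{\sigma\in\varepsilon_h^0}\int_\sigma[\tilde p_h]\,{\bf w}\cdot{\bf n}\,\phi$ couples the jumps of the postprocessed pressure $\tilde p_h$ against the dual solution $\phi$. My plan rests on two facts: (i) $p\in H_0^1(\Omega)$ implies $[p]|_\sigma=0$, so $[\tilde p_h]|_\sigma=[\tilde p_h-p]|_\sigma$; and (ii) the assumption ${\bf w}\in RT_0(\mathcal T_0)$ ensures ${\bf w}\cdot{\bf n}$ is single-valued on every $\sigma\in\varepsilon_h^0$. First, for the jump factor I would apply the scaled trace inequality $\|v\|_\sigma^2\lesssim h_K^{-1}\|v\|_K^2+h_K\|\nabla v\|_K^2$ to $v=\tilde p_h-p$: estimate (\ref{equation27}) controls the $L^2$ part by $\mathcal E_K$, while the elementwise identity $\nabla(\tilde p_h-p)|_K=S_K^{-1}({\bf u}-{\bf u}_h)|_K$ combined with (D1) controls the gradient part by $c_{S,K}^{-1/2}\mathcal E_K$; this gives $\|[\tilde p_h]\|_\sigma\lesssim h_K^{-1/2}\mathcal E_K$ up to $h$-independent constants.

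The main technical step is to recover the half-power $\|h\|_{L^\infty(\Omega)}^{1/2}$ from the $\phi$-side. A direct application of the trace inequality to $\phi$ combined with (\ref{equation5+}) would only yield $|I_6|\lesssim e_h\|Q_hp-p_h\|$, which is insufficient for the subsequent convergence argument. To sharpen the bound I would subtract an $H^1$-conforming Scott--Zhang (or Cl\'ement) interpolant $\phi_I$ of $\phi$ that preserves the homogeneous Dirichlet data, and split $\phi=\phi_I+(\phi-\phi_I)$. The remainder contribution is estimated directly: combining $\|\phi-\phi_I\|_\sigma\lesssim h_K^{3/2}\|\phi\|_{H^2(\omega_K)}$ (which exploits the $H^2$-regularity (\ref{equation5})) with $\|[\tilde p_h]\|_\sigma\lesssim h_K^{-1/2}\mathcal E_K$, Cauchy--Schwarz and summation give a term $\lesssim\|h\|_{L^\infty(\Omega)}\,e_h\|Q_hp-p_h\|$. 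For the interpolant contribution $\sum_\sigma\int_\sigma[\tilde p_h]{\bf w}\cdot{\bf n}\,\phi_I$, since $\phi_I$ is continuous and vanishes on $\partial\Omega$, elementwise integration by parts converts it to the volume form
\begin{equation*}
-(S^{-1}{\bf u}_h\cdot{\bf w},\phi_I)+(\tilde p_h,{\bf w}\cdot\nabla\phi_I)+(\tilde p_h\nabla\cdot{\bf w},\phi_I),
\end{equation*}
which, after inserting the mixed problems (\ref{equation-1})--(\ref{equation3}) and using (\ref{equation27}) together with the $L^2$ and $H^1$ interpolation errors of $\phi_I$, produces the remaining $\|h\|_{L^\infty(\Omega)}^{1/2}\,e_h\|Q_hp-p_h\|$ piece.

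The main obstacle is precisely this extraction of the half power. In Lemmas \ref{Lemma1}--\ref{Lemma5}, the full factor $\|h\|_{L^\infty(\Omega)}$ emerges naturally from Poincar\'e inequalities or elementwise approximation on volumes; here, however, the trace inequality applied to $\tilde p_h$ contributes a factor $h_K^{-1/2}$ that must be absorbed, so only half a power of $h$ survives, and only after carefully combining the $H^2$-regularity estimate (\ref{equation5}) of the dual solution with an $H^1$-conforming interpolation. Summation via Cauchy--Schwarz and the shape regularity of $\mathcal T_h$ (so each element enters through a bounded number of edges in $\varepsilon_h^0$) then finishes the proof of (\ref{equation29}).
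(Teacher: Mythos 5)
Your treatment of the jump factor is sound and equivalent to the paper's: both routes give $\|[\tilde p_h]\|_\sigma\lesssim h_\sigma^{-1/2}\mathcal E_{\omega_\sigma}$ (the paper gets $\|[\tilde p_h]\|_\sigma\lesssim h_\sigma^{1/2}\|S^{-1}{\bf u}_h\|_{\omega_\sigma}$ via a generalized Friedrichs inequality and then absorbs $h_\sigma\|S^{-1}{\bf u}_h\|$ into $\mathcal E$ by the efficiency bound (\ref{equation25})). Your remainder term with $\phi-\phi_I$ is also fine, and even yields a full power of $h$. The gap is in the interpolant contribution $\sum_\sigma\int_\sigma[\tilde p_h]\,{\bf w}\cdot{\bf n}\,\phi_I$, which you dispose of with an unsubstantiated assertion. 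Carrying out your integration by parts and subtracting the analogous identity for $p$ (valid since $[p]|_\sigma=0$) leaves
\begin{equation*}
\bigl(S^{-1}({\bf u}-{\bf u}_h)\cdot{\bf w},\phi_I\bigr)+\bigl(\tilde p_h-p,{\bf w}\cdot\nabla\phi_I\bigr)+\bigl((\tilde p_h-p)\nabla\cdot{\bf w},\phi_I\bigr),
\end{equation*}
and each of these is only $O(e_h\|Q_hp-p_h\|)$ via (\ref{equation5+}) and (\ref{equation27}) --- no factor $\|h\|_{L^\infty(\Omega)}^{1/2}$ appears. This is exactly the insufficiency you yourself identified for the direct trace-inequality argument; splitting off $\phi_I$ merely relocates it. Worse, trying to recover smallness in the first term by writing $\phi_I=\phi+(\phi_I-\phi)$ reproduces the term $(S^{-1}({\bf u}-{\bf u}_h)\cdot{\bf w},\phi)$ that the decomposition (\ref{equation11}) was designed to eliminate, so the argument becomes circular.

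The missing ingredient is the continuity of the \emph{means} of the traces of the postprocessed displacement: $\int_\sigma[\tilde p_h]\,ds=0$ for every interior side (Lemma 6.1 of Vohral\'ik, cited in the paper), which together with ${\bf w}\cdot{\bf n}$ being constant on $\sigma$ (since ${\bf w}\in RT_0$) lets one replace $\phi$ by $\phi-\phi_\sigma$ with $\phi_\sigma$ the side average of $\phi$ itself --- no conforming interpolant needed. A sidewise Poincar\'e inequality and the trace theorem then give $\|\phi-\phi_\sigma\|_\sigma\lesssim h_\sigma\|\gamma_{{\bf t}_\sigma}(\nabla\phi)\|_\sigma\lesssim h_\sigma\|\phi\|_{H^2(\bigcup\omega_\sigma)}$, so the product of the two side factors carries $h_\sigma^{3/2}$; paying $h_\sigma$ to convert $\|S^{-1}{\bf u}_h\|$ into $\mathcal E$ via (\ref{equation25}) leaves precisely the $\|h\|_{L^\infty(\Omega)}^{1/2}$ in (\ref{equation29}), after summing with Cauchy--Schwarz and invoking the regularity estimate (\ref{equation5}). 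You should replace the $\phi_I$-splitting by this mean-zero argument.
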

\begin{proof}
For any $\sigma\in\varepsilon_{h}^{0}$, let $\phi_{\sigma}$ denote
the mean of $\phi$ onto $\sigma$, i.e., $\phi_{\sigma}
:=\displaystyle\frac{1}{|\sigma|}\int_{\sigma}\phi ds$. According to the
continuity of the means of traces of the postprocessed scalar
$\tilde{p}_{h}$ (see Lemma 6.1 in \cite{Vohralik1}), and noticing
${\bf w}\in RT_{0}(\mathcal{T}_{h})$, we have
\begin{equation}\label{equation30}
\begin{array}{lll}
\displaystyle\int_{\sigma}[\tilde{p}_{h}]{\bf w}\cdot{\bf
n}\phi&=&\displaystyle\int_{\sigma}[\tilde{p}_{h}]{\bf w}\cdot{\bf
n}(\phi-\phi_{\sigma})\vspace{2mm}\\
&\lesssim&\displaystyle||[\tilde{p}_{h}]||_{\sigma}||\phi-\phi_{\sigma}||_{\sigma}.
\end{array}
\end{equation}

A sidewise Poincar\'{e} inequality and trace theory imply
\begin{equation}\label{equation31}
||\phi-\phi_{\sigma}||_{\sigma}\lesssim
h_{\sigma}||\gamma_{{\bf t}_{\sigma}}(\nabla\phi)||_{\sigma}\lesssim
h_{\sigma}||\phi||_{H^{2}(\bigcup\omega_{\sigma})}.
\end{equation}

From trace theorem, generalized Friedrichs inequality (see (2.2) in
\cite{Vohralik1}), and the postprocessed technique, we have
\begin{equation}\label{equation32}
\displaystyle||[\tilde{p}_{h}]||_{\sigma}=||[\tilde{p}_{h}]-\frac{1}{|\sigma|}\int_{\sigma}[\tilde{p}_{h}]ds||_{\sigma}
\lesssim h_{\sigma}^{1/2}||\nabla_{h}\tilde{p}_{h}||_{\omega_{\sigma}}\leq
h_{\sigma}^{1/2}||S^{-1}{\bf u}_{h}||_{\omega_{\sigma}}.
\end{equation}
A combination of (\ref{equation30})-(\ref{equation32}) yields
\begin{equation}\label{equation33}
\displaystyle\int_{\sigma}[\tilde{p}_{h}]{\bf w}\cdot{\bf n}\phi\lesssim
h_{\sigma}||S^{-1}{\bf
u}_{h}||_{\omega_{\sigma}}h_{\sigma}^{1/2}||\phi||_{H^{2}(\bigcup\omega_{\sigma})}.
\end{equation}
In light of  the local shape regularity
of element, the above estimate leads to
\begin{equation}\label{equation34}
\begin{array}{lll}
I_{6}&\lesssim&\displaystyle\sum\limits_{\sigma\in\varepsilon_{h}^{0}}h_{\sigma}||S^{-1}{\bf
u}_{h}||_{\omega_{\sigma}}h_{\sigma}^{1/2}||\phi||_{H^{2}(\bigcup\omega_{\sigma})}\vspace{2mm}\\
&\lesssim&\displaystyle||h||_{L^{\infty}(\Omega)}^{1/2}(\sum\limits_{K\in\mathcal{T}_{h}}h_{K}^{2}||S^{-1}{\bf
u}_{h}||_{K}^{2})^{1/2}||\phi||_{H^{2}(\bigcup\mathcal{T}_{h})}.
\end{array}
\end{equation}
The desired result (\ref{equation29}) follows from
(\ref{equation34}),   (\ref{equation25}) and
(\ref{equation5}).
\end{proof}

We now give an estimate of $||Q_{h}p-p_{h}||$.
\begin{theorem}\label{theorem1}
Let $e_{h}$, ${\rm osc}_{h}$, and $||h||_{L^{\infty}(\Omega)}$
denote the error of the stress and displacement variables given in (\ref{error1}), the
oscillation of data given in (\ref{oscillation1}) , and the maximum norm of the mesh-size function,
respectively, with respect to $\mathcal{T}_{h}$. Then there exits a
positive constant $C_{2}$ only depending on the shape regularity of
$\mathcal{T}_{h}$, such that
\begin{equation}\label{equation35}
||Q_{h}p-p_{h}||\leq
C_{2}C_{D}(||h||_{L^{\infty}(\Omega)}^{1/2}e_{h}+{\rm osc}_{h}),
\end{equation}
where $C_{D}$ is one variant of coefficients.
\end{theorem}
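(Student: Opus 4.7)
The plan is to assemble the six bounds already established in Lemmas \ref{Lemma1}--\ref{Lemma6} into the identity (\ref{equation14}), and then divide through by $\|Q_h p - p_h\|$. Each $I_i$ has been written in the form $(\text{error factor})\cdot\|Q_h p - p_h\|$, where the first factor is one of $\|h\|_{L^{\infty}(\Omega)}\,e_h$, $\|h\|_{L^{\infty}(\Omega)}^{1/2}\,e_h$, or $\mathrm{osc}_h$. Since $\|h\|_{L^{\infty}(\Omega)}\le\|h\|_{L^{\infty}(\Omega)}^{1/2}$ whenever the mesh is fine enough (in particular for any refinement of the quasi-uniform $\mathcal T_0$), the dominant error-type factor will be $\|h\|_{L^{\infty}(\Omega)}^{1/2}e_h$, coming from the boundary-jump term $I_6$ handled in Lemma \ref{Lemma6}; the oscillation enters solely through $I_2$ via Lemma \ref{Lemma2}.

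The first step is to substitute (\ref{equation15}), (\ref{equation17}), (\ref{equation22}), (\ref{equation24}), (\ref{equation28}), (\ref{equation29}) into $\|Q_h p - p_h\|^2=\sum_{i=1}^{6}I_i$, yielding
\begin{equation*}
\|Q_h p - p_h\|^{2} \;\lesssim\; \bigl(\|h\|_{L^{\infty}(\Omega)}^{1/2}e_h + \mathrm{osc}_h\bigr)\,\|Q_h p - p_h\|.
\end{equation*}
If $\|Q_hp-p_h\|=0$ the theorem is trivial; otherwise cancel one factor of $\|Q_hp-p_h\|$ to obtain the desired geometric bound, with the hidden constant depending only on shape regularity and on the coefficient bounds appearing inside the $\lesssim$ of each lemma.

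The final and most delicate step is bookkeeping the data-dependent constants hidden by the $\lesssim$ signs and packaging them into a single \emph{variant of coefficients} $C_D$ that matches the style of $D_K$ and $D_\sigma$ used in the indicator $\eta_{\mathcal T_h}^2$. Concretely, $I_3,I_5,I_6$ involve $\|{\bf w}\|_{L^{\infty}(K)}$ and $\|\nabla\cdot{\bf w}\|_{L^{\infty}(K)}$; $I_2$ involves $C_{{\bf w},r,K}$ through (\ref{equation20}); $I_4$ involves $\|{\bf w}\|_{L^{\infty}(K)}$ together with the full $H^2$-regularity constant of (\ref{equation5}); and $I_1$, $I_2$ (again) invoke $C_{S,K}$ through the $S^{-1/2}$-weighting of $e_h$ and the $H^1$-regularity estimate (\ref{equation5+}). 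Taking the maximum of these local data constants over the patches touched in each estimate produces a quantity that can legitimately be called a variant $C_D$ of the indicator coefficients, leaving $C_2$ as the pure shape-regularity constant.

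The main obstacle is precisely this last bundling: the lemmas were proved with $\lesssim$ so that the dependence on $S$, ${\bf w}$, $r$ and $\nabla\cdot{\bf w}$ is buried, yet the theorem asserts a constant that is one such variant. The key checks are (i) that the $H^1$- and $H^2$-regularity constants from the dual problem (\ref{equation4}) can be subsumed into $C_D$ (they involve only $S,{\bf w},r$), and (ii) that the efficiency estimate (\ref{equation25}) and the postprocessing bound (\ref{equation32}) are used consistently so that the $\tilde p_h$ terms absorb into $\mathcal E_K$ and hence into $e_h$ without introducing any factor outside $C_D$. Once these checks are in place, the assembly in the first paragraph closes the proof.
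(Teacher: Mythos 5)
Your proposal is correct and follows exactly the paper's argument: the proof there is precisely the combination of the identity (\ref{equation14}) with the six bounds (\ref{equation15}), (\ref{equation17}), (\ref{equation22}), (\ref{equation24}), (\ref{equation28}), (\ref{equation29}), followed by cancelling one factor of $\|Q_h p - p_h\|$ and absorbing $\|h\|_{L^\infty(\Omega)}\le\|h\|_{L^\infty(\Omega)}^{1/2}$ for small mesh size. Your extra discussion of how the data-dependent constants bundle into $C_D$ is more explicit than the paper, which leaves that bookkeeping implicit.
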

\begin{proof}
The estimate (\ref{equation35}) follows from a combination of
(\ref{equation14}), (\ref{equation15}), (\ref{equation17}),
(\ref{equation22}), (\ref{equation24}), (\ref{equation28}), and
(\ref{equation29}).
\end{proof}

\begin{remark} {\it A combination of the   two estimates (\ref{equation35}) and
(\ref{equation39}) actually give a posteriori bound for
$Q_{h}(p-p_{h})$. Furthermore,  following \cite{Brezzi;Fortin}, we denote
\begin{equation*}
\displaystyle\mathcal{L}_{k}^{1,NC} :=\{q_{h}\in L^{2}(\Omega):
q_{h}|_{K}\in P_{k}(K), \forall
K\in\mathcal{T}_{h},\sum\limits_{K}\int_{\partial K}p_{h}\phi
ds=0,\forall\phi\in\mathcal{R}_{k}(\partial K)\},
\end{equation*}
and let $p_{h}^{*}, \hat{p}_{h}$ be respectively the interpolates in $\mathcal{L}_{k}^{1,NC}$ of the interelement Lagrangian multiplier  $\lambda_h$ and the displacement $p$ (\cite{Brezzi;Fortin}, pages 186-187; We note that in \cite{Brezzi;Fortin} $ u$ represents the displacement variable and $p$ the stress variable). Following the same line as in \cite{Brezzi;Fortin}, it holds the estimate
\begin{equation*}
||\hat{p}_{h}-p_{h}^{*}||\lesssim||hS^{-1/2}({\bf u}-{\bf
u}_{h})||+||Q_{h}(p-p_{h})||,
\end{equation*}
which gives an a posteriori error estimate for
$\hat{p}_{h}-p_{h}^{*}$.}
\end{remark}
\begin{remark} For a pure diffusion problem, i.e., ${\bf w}=r=0$ in (\ref{equation1}),
it holds $I_{i}=0,i=3,4,5,6$. From the estimates of $I_{1}$ and
$I_{2}$, we can obtain
\begin{equation*}
||Q_{h}p-p_{h}||\lesssim||hS^{-1/2}({\bf u}-{\bf u}_{h})||+{\rm
osc}_{h},
\end{equation*}
which results in the quasi-orthogonality
\begin{equation*}
(S^{-1}({\bf u}-{\bf u}_{h}),{\bf u}_{h}-{\bf
u}_{H})\lesssim(||hS^{-1/2}({\bf u}-{\bf u}_{h})||+{\rm
osc}_{h})||f_{h}-f_{H}||,
\end{equation*}
where we have used the fact that $\nabla\cdot{\bf
u}_{h}=Q_{h}f=f_{h}$ and $\nabla\cdot{\bf u}_{H}=Q_{H}f=f_{H}$. This
estimate is somewhat different from the quasi-orthogonality results
in \cite{Carstensen;Hoppe,Carstensen;Rabus,Chen;Holst;Xu,Du2,Du3}.
\end{remark}

\section{Estimator reduction} 
Let $\omega_{\sigma}$ denote
the patch of $\sigma\in\varepsilon_{h}$, and define
$c_{\omega_{\sigma}}$, $D_{\omega_{K}}^{2}$, $D_{\mathcal{T}_{h}}^{2}(K)$, $D_{\mathcal{T}_{h}}^{2}$ respectively by
\begin{equation*}
c_{\omega_{\sigma}} :=\left \{ \begin{array}{ll}
  \max(c_{S,K}^{-1/2},c_{S,L}^{-1/2})\ \  & \mbox{if}\;  \ \ \sigma=K\cap L,\\
 c_{S,K}^{-1/2}\ \ \ \  & \mbox{if}\; \ \
 \sigma\in\varepsilon_{K}\cap\partial\Omega,
 \end{array}\right.\quad
D_{\omega_{K}}^{2}
:=\max_{\sigma\in\varepsilon_{K}}D_{\sigma}^{2}c_{\omega_{\sigma}}^{2},
\end{equation*}
$$ D_{\mathcal{T}_{h}}^{2}(K)
:=\max(h_{K}^{2}D_{K}^{2}c_{S,K}^{-1},C_{DK}\alpha_{K}^{2},D_{\omega_{K}}^{2}),
\quad D_{\mathcal{T}_{h}}^{2}
:=\displaystyle\sum\limits_{K\in\mathcal{T}_{h}}D_{\mathcal{T}_{h}}^{2}(K),$$
where   $C_{DK}$ in $ D_{\mathcal{T}_{h}}^{2}(K)$ is given by
$$C_{DK}:=2\max((C_{4}C_{S,K}^{1/2}+\frac{h_{K}}{\sqrt{c_{S,K}}}||{\bf
w}||_{L^{\infty}(K)})^{2},\frac{h_{K}^{2}C_{{\bf
w},r,K}^{2}}{c_{{\bf w},r,K}}).$$

\begin{lemma}\label{lemma7} {\rm(Estimator reduction)}\ {\it For a triangulation $\mathcal{T}_{H}$
with $\mathcal{M}_{H}\subset\mathcal{T}_{H}$, let $\mathcal{T}_{h}$
be a refinement of $\mathcal{T}_{H}$ obtained by the AMFEM
algorithm. Denote by $D_{\mathcal{T}_{0}}^{2}$ one variant of the
coefficients onto the initial mesh $\mathcal{T}_{0}$, and denote $\lambda
:=1-2^{-b/2}$,
\begin{equation}\label{estimator1}
E_{H}^{2}
:=\displaystyle\sum\limits_{K\in\mathcal{T}_{h}}||S^{-1/2}({\bf
u}_{h}-{\bf u}_{H})||_{K}^{2}+c_{{\bf
w},r,K}||p_{h}-p_{H}||_{K}^{2}.
\end{equation}
Then for any $0<\delta<1$, it holds
\begin{equation}\label{equation40}
\eta_{\mathcal{T}_{h}}^{2}({\bf
u}_{h},p_{h},\mathcal{T}_{h})\leq(1+\delta)\{\eta_{\mathcal{T}_{H}}^{2}({\bf
u}_{H},p_{H},\mathcal{T}_{H})-\lambda\eta_{\mathcal{T}_{H}}^{2}({\bf
u}_{H},p_{H},\mathcal{M}_{H})\}+C_{3}^{2}(1+\delta^{-1})D_{\mathcal{T}_{0}}^{2}E_{H}^{2}.
\end{equation}
}
\end{lemma}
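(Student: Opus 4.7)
The plan is to follow the standard \emph{estimator reduction} template (as in \cite{Cascon;Kreuzer;Nochetto;Siebert}), adapted to the weighted mixed indicator at hand. The starting point is the decomposition ${\bf u}_h = {\bf u}_H + ({\bf u}_h - {\bf u}_H)$ and $p_h = p_H + (p_h - p_H)$. For each squared norm inside $\eta_{\mathcal{T}_h}^2({\bf u}_h, p_h, K)$ I apply the elementary inequality $(a+b)^2 \le (1+\delta) a^2 + (1+\delta^{-1}) b^2$ to split the indicator into a \emph{coarse} piece $\eta_{\mathcal{T}_h}^2({\bf u}_H, p_H, K)$ and a \emph{perturbation} piece involving ${\bf u}_h - {\bf u}_H$ and $p_h - p_H$. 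The $(1+\delta)$-factor multiplies the coarse piece, which I further compress against $\eta_{\mathcal{T}_H}^2({\bf u}_H, p_H, \mathcal{T}_H) - \lambda \eta_{\mathcal{T}_H}^2({\bf u}_H, p_H, \mathcal{M}_H)$; the $(1+\delta^{-1})$-factor multiplies the perturbation piece, which is absorbed into $C_3^2 D_{\mathcal{T}_0}^2 E_H^2$.

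For the coarse estimate, I split $\mathcal{T}_h$ into (a) elements common with $\mathcal{T}_H\setminus\mathcal{M}_H$, and (b) descendants of marked elements in $\mathcal{M}_H$. On (a) the indicator reproduces $\eta_{\mathcal{T}_H}^2({\bf u}_H, p_H, K)$ exactly, since mesh size, weights, and data are unchanged. On each $K_H \in \mathcal{M}_H$ I use three facts: (i) every child $K \subset K_H$ satisfies $h_K \le 2^{-b/2} h_{K_H}$ (each marked element is bisected at least $b$ times), with $h_\sigma \le 2^{-b/2} h_{\sigma'}$ for the new sides lying on a coarse side $\sigma'$ of $K_H$; (ii) the tangential jump $[\gamma_{{\bf t}_\sigma}(S^{-1} {\bf u}_H)]$ vanishes on sides $\sigma$ interior to $K_H$, because $S^{-1} {\bf u}_H$ is a single $RT_0$ polynomial inside $K_H$; and (iii) the residual $R_K$ computed from $({\bf u}_H, p_H)$ equals the restriction of $R_{K_H}$ to $K$. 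Summing children contributions per marked $K_H$ (the $h_\sigma$-weighted jump term gives the weakest reduction) yields
$$\sum_{K\in\mathcal{T}_h} \eta_{\mathcal{T}_h}^2({\bf u}_H, p_H, K) \le \eta_{\mathcal{T}_H}^2({\bf u}_H, p_H, \mathcal{T}_H) - \lambda\, \eta_{\mathcal{T}_H}^2({\bf u}_H, p_H, \mathcal{M}_H),$$
with $\lambda = 1 - 2^{-b/2}$.

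For the perturbation estimate, I bound each piece of $\eta_{\mathcal{T}_h}^2({\bf u}_h - {\bf u}_H, p_h - p_H, K)$ by $\|S^{-1/2}({\bf u}_h - {\bf u}_H)\|_K^2 + c_{{\bf w},r,K}\|p_h - p_H\|_K^2$ times a factor matching $D_{\mathcal{T}_h}^2(K)$. The $\|S^{-1}(\cdot)\|_K$-term is controlled by $\|S^{-1} {\bf v}\|_K \le c_{S,K}^{-1/2}\|S^{-1/2} {\bf v}\|_K$, producing the $h_K^2 D_K^2 c_{S,K}^{-1}$ contribution. The difference of residuals $R_K$ (with $({\bf u}_h,p_h)$) and $R_{K_H}|_K$ (with $({\bf u}_H,p_H)$) expands as
$$-\nabla\!\cdot\!({\bf u}_h-{\bf u}_H) + (S^{-1}({\bf u}_h - {\bf u}_H))\cdot{\bf w} - (r+\nabla\!\cdot\!{\bf w})(p_h - p_H),$$
whose three summands are bounded by the inverse estimate $\|\nabla\!\cdot\!({\bf u}_h - {\bf u}_H)\|_K \le C_4 h_K^{-1}\|{\bf u}_h - {\bf u}_H\|_K$ (feeding $C_4$ into the first summand of $C_{DK}$) and by pointwise bounds using $\|{\bf w}\|_{L^\infty(K)}$ and $C_{{\bf w},r,K}$, which together reproduce the two summands of $C_{DK}$. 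A scaled trace/inverse inequality then gives $h_\sigma\|[\gamma_{{\bf t}_\sigma}(S^{-1}({\bf u}_h - {\bf u}_H))]\|_\sigma^2 \lesssim D_\sigma^2 c_{\omega_\sigma}^2 \|S^{-1/2}({\bf u}_h - {\bf u}_H)\|_{\omega_\sigma}^2$, producing the $D_{\omega_K}^2$ factor. Summing over elements (shape regularity absorbs the finite patch overlap) and using the monotonicity $D_{\mathcal{T}_h}^2(K) \le D_{\mathcal{T}_0}^2$ (refinement only decreases $h_K$ and $\alpha_K$, while $c_{S,K}, c_{{\bf w},r,K}, C_{{\bf w},r,K}$ are inherited from $\mathcal{T}_0$) delivers the remainder $C_3^2 D_{\mathcal{T}_0}^2 E_H^2$.

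The main obstacle I expect is tracking the constants through the perturbation step with enough precision so that they match the precise definitions of $D_K$, $D_\sigma$, $C_{DK}$, and $D_{\mathcal{T}_h}^2(K)$ exactly — in particular aligning $C_4$ from the inverse estimate with the first summand of $C_{DK}$, and reproducing $h_K^2 C_{{\bf w},r,K}^2/c_{{\bf w},r,K}$ from the algebraic part of the residual difference. A secondary verification is that the tangential jumps across all newly-created interior sides of each $K_H\in\mathcal{M}_H$ truly vanish in the coarse-part bookkeeping, which is essential for the sharp reduction factor $\lambda = 1-2^{-b/2}$.
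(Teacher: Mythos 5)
Your proposal follows essentially the same route as the paper's proof: Young's inequality splits each weighted indicator term into a coarse part (with factor $1+\delta$) and a perturbation part (with factor $1+\delta^{-1}$); the perturbation part is controlled via the inverse estimate for $\nabla\cdot({\bf u}_h-{\bf u}_H)$, pointwise bounds on ${\bf w}$ and $r+\nabla\cdot{\bf w}$, and a trace/inverse estimate for the tangential jumps, then absorbed into $C_3^2(1+\delta^{-1})D_{\mathcal{T}_0}^2E_H^2$ by monotonicity of the coefficients; the coarse part is compressed using the $2^{-b/2}$ mesh-size reduction on marked elements and non-increase of the indicators elsewhere. Your extra observations (vanishing of the tangential jump of $S^{-1}{\bf u}_H$ across sides created inside a marked element, and the restriction property of the residual) are exactly the facts the paper uses implicitly, so the argument is correct and matches the paper's.
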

\begin{proof} By   triangle inequality and Young inequality, we have
\begin{equation}\label{equation41}
h_{K}^{2}||S^{-1}{\bf
u}_{h}||_{K}^{2}\leq(1+\delta)h_{K}^{2}||S^{-1}{\bf
u}_{H}||_{K}^{2}+(1+\delta^{-1})h_{K}^{2}c_{S,K}^{-1}||S^{-1/2}({\bf
u}_{h}-{\bf u}_{H})||_{K}^{2}.
\end{equation}
Inverse inequality implies
\begin{equation*}
||\nabla\cdot({\bf u}_{h}-{\bf u}_{H})||_{K}\leq
C_{4}C_{S,K}^{1/2}h_{K}^{-1}||S^{-1/2}({\bf u}_{h}-{\bf
u}_{H})||_{K},
\end{equation*}
which leads to
\begin{equation}\label{equation42}
\begin{array}{lll}
& &h_{K}^{2}||f-\nabla\cdot{\bf u}_{h}+S^{-1}{\bf u}_{h}\cdot{\bf
w}-(r+\nabla\cdot{\bf w})p_{h}||_{K}^{2}\vspace{2mm}\\
& &\ \leq(1+\delta)h_{K}^{2}||f-\nabla\cdot{\bf u}_{H}+S^{-1}{\bf
u}_{H}\cdot{\bf w}-(r+\nabla\cdot{\bf
w})p_{H}||_{K}^{2}\vspace{2mm}\\
& &\ \ \ +(1+\delta^{-1})C_{DK}(||S^{-1/2}({\bf u}_{h}-{\bf
u}_{H})||_{K}^{2}+c_{{\bf w},r,K}||p_{h}-p_{H}||_{K}^{2}).
\end{array}
\end{equation}

From inverse inequality and local shape regularity of the mesh, it follows
\begin{equation}\label{equation43}
||[\gamma_{{\bf t}_{\sigma}}(S^{-1}{\bf
u}_{h})]||_{\sigma}\leq||[\gamma_{{\bf t}_{\sigma}}(S^{-1}{\bf
u}_{H})]||_{\sigma}+C_{5}c_{\omega_{\sigma}}h_{\sigma}^{-1/2}||S^{-1/2}({\bf
u}_{h}-{\bf u}_{H})||_{\omega_{\sigma}},
\end{equation}
which, together with Young inequality, yields
\begin{equation}\label{equation44}
h_{\sigma}||[\gamma_{{\bf t}_{\sigma}}(S^{-1}{\bf
u}_{h})]||_{\sigma}^{2}\leq (1+\delta)h_{\sigma}||[\gamma_{{\bf
t}_{\sigma}}(S^{-1}{\bf
u}_{H})]||_{\sigma}^{2}+(1+\delta^{-1})C_{5}^{2}c_{\omega_{\sigma}}^{2}||S^{-1/2}({\bf
u}_{h}-{\bf u}_{H})||_{\omega_{\sigma}}^{2}.
\end{equation}

Summing (\ref{equation41}) and (\ref{equation42}) over all elements
$K\in\mathcal{T}_{h}$,   summing (\ref{equation44}) over all interior sides
$\sigma\in\varepsilon_{h}^{0}$, and noticing the monotonicity of
variants of coefficients, we obtain
\begin{equation}\label{equation45}
\eta_{\mathcal{T}_{h}}^{2}({\bf
u}_{h},p_{h},\mathcal{T}_{h})\leq(1+\delta)\eta_{\mathcal{T}_{h}}^{2}({\bf
u}_{H},p_{H},\mathcal{T}_{h})+C_{3}^{2}(1+\delta^{-1})D_{\mathcal{T}_{h}}^{2}E_{H}^{2}.
\end{equation}
For a marked element $K\in\mathcal{M}_{H}$, we set
$\mathcal{T}_{h,K} :=\{K'\in\mathcal{T}_{h}|K'\subset K\}$. It holds
\begin{equation*}
\left \{ \begin{array}{ll}
  \displaystyle\sum\limits_{K'\in\mathcal{T}_{h,K}}\eta_{\mathcal{T}_{h}}^{2}({\bf
u}_{H},p_{H},K')\leq2^{-b/2}\eta_{\mathcal{T}_{H}}^{2}({\bf
u}_{H},p_{H},K)\ \  & \mbox{for}\;  \ \ \ K\in\mathcal{T}_{H}/\mathcal{T}_{h},\\
 \eta_{\mathcal{T}_{h}}^{2}({\bf
u}_{H},p_{H},K)\leq\eta_{\mathcal{T}_{H}}^{2}({\bf u}_{H},p_{H},K)\
\ \ \  & \mbox{for}\; \ \
 K\in\mathcal{T}_{H}/\mathcal{M}_{H},
 \end{array}\right.
\end{equation*}
which results in the following estimate
\begin{equation}\label{equation46}
\begin{array}{lll}
\eta_{\mathcal{T}_{h}}^{2}({\bf
u}_{H},p_{H},\mathcal{T}_{h})&\leq&2^{-b/2}\eta_{\mathcal{T}_{H}}^{2}({\bf
u}_{H},p_{H},\mathcal{M}_{H})+\eta_{\mathcal{T}_{H}}^{2}({\bf
u}_{H},p_{H},\mathcal{T}_{H}/\mathcal{T}_{h})\vspace{2mm}\\
&=&\eta_{\mathcal{T}_{H}}^{2}({\bf
u}_{H},p_{H},\mathcal{T}_{H})-\lambda\eta_{\mathcal{T}_{H}}^{2}({\bf
u}_{H},p_{H},\mathcal{M}_{H}).
\end{array}
\end{equation}
The desired result (\ref{equation40}) follows from
(\ref{equation45}), (\ref{equation46}) and the
monotonicity of $D_{\mathcal{T}_{h}}$.
\end{proof}

\section{Proof of theorem \ref{theorem--}}.\ In this section, we show that the
error plus some quantity uniformly reduces with a fixed factor on
two successive meshes, which shows AMFEM is convergent.
\begin{lemma}\label{Lemma8}
Let $({\bf u}_{h},p_{h})\in RT_{0}(\mathcal{T}_{h})\times
P_{0}(\mathcal{T}_{h})$ be the approximation solutions to the
stress and   displacement variables with respect to
$\mathcal{T}_{h}$, and $e_{h}$   the error of the stress and
displacement variables with respect to $\mathcal{T}_{h}$. Denote by
$h_{0}$ the mesh-size of the quasi-uniform initial mesh
$\mathcal{T}_{0}$, by $D_{1}$ one variant of the coefficients
determined by $C_{D}$. Then it holds for any $0<\delta_{1}<1$
\begin{equation}\label{equation48}
\begin{array}{lll}
& &||S^{-1/2}({\bf u}-{\bf
u}_{h})||^{2}\leq2\delta_{1}^{-1}D_{1}(h_{0}e_{h}^{2}+h_{0}^{2}(||f-f_{h}||^{2}+||h\nabla_{h}(S^{-1}{\bf
u}_{h}\cdot{\bf w})||^{2}))\vspace{2mm}\\
& &\ \ \ \ \ \hspace{0.5cm}+||S^{-1/2}({\bf u}-{\bf
u}_{H})||^{2}-||S^{-1/2}({\bf u}_{h}-{\bf
u}_{H})||^{2}+1/2\delta_{1}||\nabla\cdot({\bf u}_{h}-{\bf
u}_{H})||^{2}.
\end{array}
\end{equation}
\end{lemma}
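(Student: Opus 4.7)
My approach is to derive a quasi-orthogonality bound by combining the standard three-point expansion with the a posteriori control on $Q_{h}p - p_{h}$ from Theorem~\ref{theorem1}. I would start from the identity
\begin{equation*}
||S^{-1/2}({\bf u}-{\bf u}_{h})||^{2}=||S^{-1/2}({\bf u}-{\bf u}_{H})||^{2}-||S^{-1/2}({\bf u}_{h}-{\bf u}_{H})||^{2}-2(S^{-1}({\bf u}-{\bf u}_{h}),{\bf u}_{h}-{\bf u}_{H}).
\end{equation*}
The entire task then reduces to bounding the cross term $-2(S^{-1}({\bf u}-{\bf u}_{h}),{\bf u}_{h}-{\bf u}_{H})$ by the quantities appearing on the right-hand side of (\ref{equation48}).

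For the cross term I would exploit the mixed Galerkin identity. Taking ${\bf v}={\bf u}_{h}-{\bf u}_{H}\in RT_{0}(\mathcal{T}_{h})\subset H({\rm div},\Omega)$ in (\ref{equation-1}) (which is legitimate since $\mathcal{T}_{h}$ refines $\mathcal{T}_{H}$) and ${\bf v}_{h}={\bf u}_{h}-{\bf u}_{H}$ in (\ref{equation2}), then subtracting, yields
\begin{equation*}
(S^{-1}({\bf u}-{\bf u}_{h}),{\bf u}_{h}-{\bf u}_{H})=(p-p_{h},\nabla\cdot({\bf u}_{h}-{\bf u}_{H}))=(Q_{h}p-p_{h},\nabla\cdot({\bf u}_{h}-{\bf u}_{H})),
\end{equation*}
the last equality being the $L^{2}$-orthogonality of $p-Q_{h}p$ to $P_{0}(\mathcal{T}_{h})\ni \nabla\cdot({\bf u}_{h}-{\bf u}_{H})$. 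Applying Cauchy--Schwarz and the weighted Young inequality $2ab\leq\frac{\delta_{1}}{2}a^{2}+\frac{2}{\delta_{1}}b^{2}$ then gives
\begin{equation*}
|2(S^{-1}({\bf u}-{\bf u}_{h}),{\bf u}_{h}-{\bf u}_{H})|\leq\frac{1}{2}\delta_{1}||\nabla\cdot({\bf u}_{h}-{\bf u}_{H})||^{2}+\frac{2}{\delta_{1}}||Q_{h}p-p_{h}||^{2}.
\end{equation*}

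To absorb $||Q_{h}p-p_{h}||^{2}$ into the form claimed in (\ref{equation48}), I would apply Theorem~\ref{theorem1} to get $||Q_{h}p-p_{h}||^{2}\lesssim C_{D}^{2}(||h||_{L^{\infty}(\Omega)}e_{h}^{2}+{\rm osc}_{h}^{2})$ and invoke $||h||_{L^{\infty}(\Omega)}\leq h_{0}$ from the quasi-uniformity of $\mathcal{T}_{0}$. The critical step is to recast the oscillation as $h_{0}^{2}(||f-f_{h}||^{2}+||h\nabla_{h}(S^{-1}{\bf u}_{h}\cdot{\bf w})||^{2})$. Here assumptions (D1)--(D4) are essential: on each $K\in\mathcal{T}_{h}$, which refines $\mathcal{T}_{0}$, the quantities $\nabla\cdot{\bf u}_{h}$, $p_{h}$, $r$, and $\nabla\cdot{\bf w}$ are all constants, so that
\begin{equation*}
(R_{K}-\bar{R}_{K})|_{K}=(f-\bar{f}_{K})+((S^{-1}{\bf u}_{h}\cdot{\bf w})-\overline{(S^{-1}{\bf u}_{h}\cdot{\bf w})})|_{K}.
\end{equation*}
An elementwise Poincar\'e inequality applied to the second piece, followed by summation and use of $h_{K}\leq h_{0}$, yields the desired bound on ${\rm osc}_{h}^{2}$. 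Substituting everything back into the three-point identity delivers (\ref{equation48}) with $D_{1}$ absorbing $2(C_{2}C_{D})^{2}$ and the Poincar\'e constants.

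The main obstacle in my view is the bookkeeping for the oscillation in the last step: one must recognize, via the data assumptions on $\mathcal{T}_{0}$ and the fact that $\mathcal{T}_{h}$ refines it, that every zeroth-order contribution to $R_{K}$ is piecewise constant on $\mathcal{T}_{h}$, so that the oscillation decouples neatly into the $f$-oscillation $||f-f_{h}||$ and a weighted gradient of the convection term. Everything else is a standard Pythagorean-plus-Young manipulation driven by Theorem~\ref{theorem1}.
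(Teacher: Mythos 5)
Your proposal is correct and follows essentially the same route as the paper: the same three-point identity, the same use of \eqref{equation-1}--\eqref{equation2} to rewrite the cross term as $(Q_{h}p-p_{h},\nabla\cdot({\bf u}_{h}-{\bf u}_{H}))$, the same weighted Young inequality, and the same reduction of ${\rm osc}_{h}$ to $h_{0}^{2}(\|f-f_{h}\|^{2}+\|h\nabla_{h}(S^{-1}{\bf u}_{h}\cdot{\bf w})\|^{2})$ via Theorem \ref{theorem1}. Your explicit decomposition of $R_{K}-\bar{R}_{K}$ using the data assumptions is exactly the step the paper invokes implicitly before \eqref{equation50}.
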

\begin{proof}
From (\ref{equation-1}) and (\ref{equation2})  we get
\begin{equation}\label{equation49}
\begin{array}{lll}
||S^{-1/2}({\bf u}-{\bf u}_{h})||^{2}&=&||S^{-1/2}({\bf u}-{\bf
u}_{H})||^{2}-||S^{-1/2}({\bf
u}_{h}-{\bf u}_{H})||^{2}\vspace{2mm}\\
& &-2(S^{-1}({\bf u}-{\bf u}_{h}),{\bf u}_{h}-{\bf
u}_{H})\vspace{2mm}\\
&=&||S^{-1/2}({\bf u}-{\bf u}_{H})||^{2}-||S^{-1/2}({\bf
u}_{h}-{\bf u}_{H})||^{2}\vspace{2mm}\\
& &-2(p-p_{h},\nabla\cdot({\bf u}_{h}-{\bf u}_{H}))\vspace{2mm}\\
&=&||S^{-1/2}({\bf u}-{\bf u}_{H})||^{2}-||S^{-1/2}({\bf
u}_{h}-{\bf u}_{H})||^{2}\vspace{2mm}\\
& &-2(Q_{h}p-p_{h},\nabla\cdot({\bf u}_{h}-{\bf u}_{H})).
\end{array}
\end{equation}
The definition of the residual $R_{K}$ and the assumptions of data
imply that for each $K\in\mathcal{T}_{h}$,
\begin{equation*}
h_{K}||R_{K}-\bar{R}_{K}||_{K}\leq
C_{6}(h_{K}^{2}||\nabla(S^{-1}{\bf u}_{h}\cdot{\bf
w})||_{K}+h_{K}||f-f_{h}||_{K}),
\end{equation*}
which, together with the fact
$||h||_{L^{\infty}(\Omega)}\leq h_{0}$ and the definition (\ref{oscillation1}) of ${\rm
osc}_{h}$, yields the estimate
\begin{equation}\label{equation50}
{\rm osc}_{h}^{2}\leq
2C_{6}^{2}h_{0}^{2}(||f-f_{h}||^{2}+||h\nabla_{h}(S^{-1}{\bf
u}_{h}\cdot{\bf w})||^{2}).
\end{equation}

Applying the above estimate (\ref{equation50}) to
(\ref{equation35}), we obtain
\begin{equation}\label{equation51}
||Q_{h}p-p_{h}||^{2}\leq
D_{1}(h_{0}e_{h}^{2}+h_{0}^{2}(||f-f_{h}||^{2}+||h\nabla_{h}(S^{-1}{\bf
u}_{h}\cdot{\bf w})||^{2})).
\end{equation}
In light of  Young inequality, we have
\begin{equation}\label{equation52}
2(Q_{h}p-p_{h},\nabla\cdot({\bf u}_{h}-{\bf
u}_{H}))\leq2\delta_{1}^{-1}||Q_{h}p-p_{h}||^{2}+\delta_{1}/2||\nabla\cdot({\bf
u}_{h}-{\bf u}_{H})||^{2}.
\end{equation}
The desired result (\ref{equation48}) follows from a combination of
(\ref{equation49}), (\ref{equation51}), and (\ref{equation52}).
\end{proof}
\begin{lemma}{\label{Lemma9}}\ Let $D_{2}$ and $D_{3}$
be two variants of the coefficients respectively given by
\begin{equation*}
D_{2} :=\max_{K\in\mathcal{T}_{h}}||{\bf
w}||_{L^{\infty}(K)}^{2}c_{S,K}^{-1},\ \ D_{3}
:=\max_{K\in\mathcal{T}_{h}}C_{{\bf w},r,K}.
\end{equation*}
Under the assumptions of Lemma \ref{Lemma8}, it holds
\begin{equation}\label{equation53}
\begin{array}{lll}
& &||\nabla\cdot({\bf u}-{\bf
u}_{h})||^{2}\leq4D_{1}D_{3}(h_{0}e_{h}^{2}+h_{0}^{2}(||f-f_{h}||^{2}+||h\nabla_{h}(S^{-1}{\bf
u}_{h}\cdot{\bf w})||^{2}))\vspace{2mm}\\
& &\ \hspace{0.5cm}+||\nabla\cdot({\bf u}-{\bf
u}_{H})||^{2}-1/2||\nabla\cdot({\bf u}_{h}-{\bf
u}_{H})||^{2}+4D_{2}||S^{-1/2}({\bf u}-{\bf u}_{h})||^{2}.
\end{array}
\end{equation}
\end{lemma}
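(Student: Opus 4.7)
The plan is to mirror the proof of Lemma \ref{Lemma8} but with the divergence norm in the role of $\|S^{-1/2}(\cdot)\|$. First I would expand via the three-point identity
\begin{equation*}
\|\nabla\cdot({\bf u}-{\bf u}_{h})\|^{2}=\|\nabla\cdot({\bf u}-{\bf u}_{H})\|^{2}-\|\nabla\cdot({\bf u}_{h}-{\bf u}_{H})\|^{2}-2(\nabla\cdot({\bf u}-{\bf u}_{h}),\nabla\cdot({\bf u}_{h}-{\bf u}_{H})).
\end{equation*}
Two of the three terms on the right already sit in the target inequality; only the cross term needs work.

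Second, since ${\bf u}_{h},{\bf u}_{H}\in RT_{0}$, the function $\nabla\cdot({\bf u}_{h}-{\bf u}_{H})$ lies in $P_{0}(\mathcal{T}_{h})$, so I can use it as a test function in the difference of (\ref{equation0}) and (\ref{equation3}), obtaining
\begin{equation*}
(\nabla\cdot({\bf u}-{\bf u}_{h}),\nabla\cdot({\bf u}_{h}-{\bf u}_{H}))=(S^{-1}({\bf u}-{\bf u}_{h})\cdot{\bf w},\nabla\cdot({\bf u}_{h}-{\bf u}_{H}))-((r+\nabla\cdot{\bf w})(p-p_{h}),\nabla\cdot({\bf u}_{h}-{\bf u}_{H})).
\end{equation*}
Using (D2)--(D3), $(r+\nabla\cdot{\bf w})\nabla\cdot({\bf u}_{h}-{\bf u}_{H})$ is piecewise constant, so I may replace $p-p_{h}$ by $Q_{h}p-p_{h}$ in the reaction term, exactly as in the proof of Lemma \ref{Lemma8}.

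Third, I would estimate the two pieces by Cauchy--Schwarz: for the convection piece use $|S^{-1}{\bf v}\cdot{\bf w}|\le|S^{-1/2}{\bf v}|\cdot|S^{-1/2}{\bf w}|$ together with the elementwise bound $|S^{-1/2}{\bf w}|^{2}=S^{-1}{\bf w}\cdot{\bf w}\le c_{S,K}^{-1}\|{\bf w}\|_{L^{\infty}(K)}^{2}\le D_{2}$; for the reaction piece use $|r+\nabla\cdot{\bf w}|\le D_{3}$. Then apply Young's inequality twice (with weight $1/4$ each) to each of the cross terms so that a total of $\tfrac12\|\nabla\cdot({\bf u}_{h}-{\bf u}_{H})\|^{2}$ can be subtracted from the $-\|\nabla\cdot({\bf u}_{h}-{\bf u}_{H})\|^{2}$ sitting on the right, producing $-\tfrac12\|\nabla\cdot({\bf u}_{h}-{\bf u}_{H})\|^{2}$ as in the target. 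What remains are a multiple of $\|S^{-1/2}({\bf u}-{\bf u}_{h})\|^{2}$ (yielding the $4D_{2}$ term) and a multiple of $\|Q_{h}p-p_{h}\|^{2}$.

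Finally, I would invoke (\ref{equation51}) from the proof of Lemma \ref{Lemma8} to bound $\|Q_{h}p-p_{h}\|^{2}$ by $D_{1}(h_{0}e_{h}^{2}+h_{0}^{2}(\|f-f_{h}\|^{2}+\|h\nabla_{h}(S^{-1}{\bf u}_{h}\cdot{\bf w})\|^{2}))$, absorbing the $D_{3}$ factor into the final constant to recover the claimed coefficient. The main bookkeeping obstacle is choosing the Young weights so that precisely $\tfrac12\|\nabla\cdot({\bf u}_{h}-{\bf u}_{H})\|^{2}$ is left intact on the right, while simultaneously producing the clean constants $4D_{2}$ in front of $\|S^{-1/2}({\bf u}-{\bf u}_{h})\|^{2}$ and a $D_{1}D_{3}$-type multiple in front of the oscillation/error combination; this is the only delicate point since all other steps are structurally identical to those in Lemma \ref{Lemma8}.
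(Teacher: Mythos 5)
Your proposal is correct and follows essentially the same route as the paper: the three-point identity for the divergence norm, testing the difference of (\ref{equation0}) and (\ref{equation3}) with $\nabla\cdot({\bf u}_{h}-{\bf u}_{H})\in P_{0}(\mathcal{T}_{h})$ to rewrite the cross term (with $p-p_{h}$ replaced by $Q_{h}p-p_{h}$), Cauchy--Schwarz plus Young to leave $-\tfrac12\|\nabla\cdot({\bf u}_{h}-{\bf u}_{H})\|^{2}$, and (\ref{equation51}) to absorb $\|Q_{h}p-p_{h}\|^{2}$. This is exactly the paper's argument (equations (\ref{equation54})--(\ref{equation55})), so no further comment is needed.
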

\begin{proof}\ Notice
\begin{equation}\label{equation54}
||\nabla\cdot({\bf u}-{\bf u}_{h})||^{2}=||\nabla\cdot({\bf u}-{\bf
u}_{H})||^{2}-||\nabla\cdot({\bf u}_{h}-{\bf
u}_{H})||^{2}-2(\nabla\cdot({\bf u}-{\bf u}_{h}),\nabla\cdot({\bf
u}_{h}-{\bf u}_{H})).
\end{equation}
A combination of (\ref{equation0}) and (\ref{equation3}) leads to
\begin{equation}\label{equation55}
\begin{array}{lll}
& &(\nabla\cdot({\bf u}-{\bf u}_{h}),\nabla\cdot({\bf u}_{h}-{\bf
u}_{H}))=(S^{-1}({\bf u}-{\bf u}_{h})\cdot{\bf w},\nabla\cdot({\bf
u}_{h}-{\bf u}_{H}))\vspace{2mm}\\
& &\ \hspace{4.4cm}-((r+\nabla\cdot{\bf
w})(Q_{h}p-p_{h}),\nabla\cdot({\bf u}_{h}-{\bf u}_{H}))\vspace{2mm}\\
& &\leq2D_{2}||S^{-1/2}({\bf u}-{\bf
u}_{h})||^{2}+2D_{3}||Q_{h}p-p_{h}||^{2}+1/4||\nabla\cdot({\bf
u}_{h}-{\bf u}_{H})||.
\end{array}
\end{equation}
The estimate (\ref{equation53}) follows from
(\ref{equation54}), (\ref{equation51})  and (\ref{equation55}).
\end{proof}
\begin{lemma}{\label{Lemma10}}\ Let $D_{4}$ be one
variant of the coefficients given by $D_{4}
:=\max_{K\in\mathcal{T}_{h}}c_{{\bf w},r,K}$. Under the assumption
of Lemma \ref{Lemma8}, it holds
\begin{equation}\label{equation56}
\begin{array}{lll}
\displaystyle\sum\limits_{K\in\mathcal{T}_{h}}c_{{\bf
w},r,K}||p-p_{h}||^{2}&\leq&\displaystyle\sum\limits_{K\in\mathcal{T}_{H}}c_{{\bf
w},r,K}||p-p_{H}||_{K}^{2}-1/2\sum\limits_{K\in\mathcal{T}_{H}}c_{{\bf
w},r,K}||p_{h}-p_{H}||_{K}^{2}\vspace{2mm}\\
&+&2D_{4}D_{1}(h_{0}e_{h}^{2}+h_{0}^{2}(||f-f_{h}||^{2}+||h\nabla_{h}(S^{-1}{\bf
u}_{h}\cdot{\bf w})||^{2})).
\end{array}
\end{equation}
\end{lemma}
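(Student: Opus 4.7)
The plan is to mimic the strategy used in Lemmas \ref{Lemma8} and \ref{Lemma9}: start from the elementary Pythagoras-type identity on each $K \in \mathcal{T}_H$,
\begin{equation*}
\|p-p_h\|_K^2 \;=\; \|p-p_H\|_K^2 \;-\; \|p_h-p_H\|_K^2 \;-\; 2(p-p_h,\,p_h-p_H)_K,
\end{equation*}
multiply by the (piecewise-constant on $\mathcal{T}_0$, hence constant on each $K\in\mathcal{T}_H$) weight $c_{{\bf w},r,K}$, and sum. Because $\mathcal{T}_h$ refines $\mathcal{T}_H$ and the weight is constant on $K \in \mathcal{T}_H$, the weighted sums over $\mathcal{T}_h$ and $\mathcal{T}_H$ agree, so rearranging yields
\begin{equation*}
\sum_{K\in\mathcal{T}_h}c_{{\bf w},r,K}\|p-p_h\|_K^2
= \sum_{K\in\mathcal{T}_H}c_{{\bf w},r,K}\|p-p_H\|_K^2
- \sum_{K\in\mathcal{T}_H}c_{{\bf w},r,K}\|p_h-p_H\|_K^2 - 2\mathcal{I},
\end{equation*}
where $\mathcal{I} := \sum_{K}c_{{\bf w},r,K}(p-p_h,p_h-p_H)_K$ is the cross term that must be controlled.

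The crucial observation is that $p_h - p_H$ belongs to $P_0(\mathcal{T}_h)$, since $P_0(\mathcal{T}_H) \subset P_0(\mathcal{T}_h)$. Hence on each $K$ the inner product filters through the $L^2$-projection:
\begin{equation*}
(p-p_h,p_h-p_H)_K = (Q_h p - p_h,\,p_h-p_H)_K.
\end{equation*}
This is exactly the quantity for which Theorem \ref{theorem1} was established in Section 3, which is what makes the whole analysis work.

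Next, I apply Young's inequality in the sharpened form $2ab \le \tfrac{1}{2}a^2 + 2b^2$ with $a=\|p_h-p_H\|_K$, $b=\|Q_h p-p_h\|_K$, so that
\begin{equation*}
-2\mathcal{I} \;\leq\; \tfrac{1}{2}\sum_{K\in\mathcal{T}_H} c_{{\bf w},r,K}\|p_h-p_H\|_K^2 \;+\; 2\sum_{K\in\mathcal{T}_h} c_{{\bf w},r,K}\|Q_h p-p_h\|_K^2.
\end{equation*}
The first term combines with the existing $-\sum c_{{\bf w},r,K}\|p_h-p_H\|_K^2$ to leave a clean $-\tfrac{1}{2}\sum c_{{\bf w},r,K}\|p_h-p_H\|_K^2$, which is the negative gain term appearing in the statement. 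The second term is bounded by using the uniform weight $D_4 := \max_K c_{{\bf w},r,K}$, giving $2D_4\|Q_h p - p_h\|^2$.

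Finally, I invoke the $L^2$-projection estimate (\ref{equation51}) proved in Lemma \ref{Lemma8},
\begin{equation*}
\|Q_h p - p_h\|^2 \;\leq\; D_1\bigl(h_0 e_h^2 + h_0^2(\|f-f_h\|^2 + \|h\nabla_h(S^{-1}{\bf u}_h\cdot{\bf w})\|^2)\bigr),
\end{equation*}
which converts the $\|Q_h p - p_h\|^2$ contribution into exactly the data/oscillation-like term $2D_4 D_1(h_0 e_h^2 + h_0^2(\cdots))$ appearing on the right-hand side of (\ref{equation56}). The main (and really only) obstacle is picking the right Young constant so that the $\|p_h - p_H\|^2$ coefficient settles at $\tfrac{1}{2}$ rather than a larger fraction, because a larger coefficient would leave a nonnegative residue and destroy the contraction structure that the subsequent convergence proof of Theorem \ref{theorem--} relies on; everything else is bookkeeping built on the hard analytical work already done in Section 3.
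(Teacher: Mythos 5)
Your proposal is correct and follows essentially the same route as the paper's proof: the same elementwise Pythagoras identity, the same replacement $(p-p_{h},p_{h}-p_{H})_{K}=(Q_{h}p-p_{h},p_{h}-p_{H})_{K}$ justified by $p_{h}-p_{H}\in P_{0}(\mathcal{T}_{h})$, the same weighted Young inequality $2ab\leq \tfrac{1}{2}a^{2}+2b^{2}$ producing the $-\tfrac{1}{2}$ gain term and the $2D_{4}\|Q_{h}p-p_{h}\|^{2}$ remainder, and the same final appeal to (\ref{equation51}). The only difference is cosmetic: you make the Young step explicit where the paper leaves it implicit in passing from its identity to its summed inequality.
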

\begin{proof}\ For each element $K\in\mathcal{T}_{h}$, it holds
the following identity
\begin{equation}\label{equation57}
\begin{array}{lll}
||p-p_{h}||_{K}^{2}&=&||p-p_{H}||_{K}^{2}-||p_{h}-p_{H}||_{K}^{2}-2(p-p_{h},p_{h}-p_{H})_{K}\vspace{2mm}\\
&=&||p-p_{H}||_{K}^{2}-||p_{h}-p_{H}||_{K}^{2}-2(Q_{h}p-p_{h},p_{h}-p_{H})_{K}.
\end{array}
\end{equation}
Notice that $c_{{\bf
w},r,K}$ does not change from $\mathcal{T}_{H}$ to $\mathcal{T}_{h}$.
Summing (\ref{equation57}) by multiplying $c_{{\bf w},r,K}$ over all
elements $K\in\mathcal{T}_{h}$, we have
\begin{equation}\label{equation58}
\begin{array}{lll}
\displaystyle\sum\limits_{K\in\mathcal{T}_{h}}c_{{\bf
w},r,K}||p-p_{h}||_{K}^{2}&\leq&\displaystyle\sum\limits_{K\in\mathcal{T}_{H}}c_{{\bf
w},r,K}||p-p_{H}||_{K}^{2}-1/2\sum\limits_{K\in\mathcal{T}_{H}}c_{{\bf
w},r,K}||p_{h}-p_{H}||_{K}^{2}\vspace{2mm}\\
&+&2D_{4}||Q_{h}p-p_{h}||^{2}.
\end{array}
\end{equation}
The conclusion (\ref{equation56}) follows from
(\ref{equation58}) and (\ref{equation51}).
\end{proof}

In what follows, we show the reduction of the error. To this end,
set $\gamma_{1},\gamma_{2}, \varepsilon_{0}$, and $\delta_{1}$ to be
any positive constants, which will be determined below. Introduce the following
quantity:
\begin{equation}
\label{quantity}
A_{h}^{2} :=\delta_{1}(1-\varepsilon_{0})^{-1}||\nabla\cdot({\bf
u}-{\bf
u}_{h})||^{2}+\gamma_{1}\eta_{h}^{2}+\gamma_{2}(||f-f_{h}||^{2}+||h\nabla_{h}(S^{-1}{\bf
u}_{h}\cdot{\bf w})||^{2}),
\end{equation}
where $f_{h}$ is the $L^{2}-$projection of $f$ onto
$P_{0}(\mathcal{T}_{h})$. We note that the definition of $A_{H}$ is similar to
$A_{h}$.

\begin{theorem}\label{theorem1+}
Let $({\bf u}_{h},p_{h})\in RT_{0}(\mathcal{T}_{h})\times
P_{0}(\mathcal{T}_{h})$ and $({\bf u}_{H},p_{H})\in
RT_{0}(\mathcal{T}_{H})\times P_{0}(\mathcal{T}_{H})$ be the
approximation solutions to the stress and
displacement variables with respect to $\mathcal{T}_{h}$ and
$\mathcal{T}_{H}$, respectively. Denote by $e_{h}$ and $e_{H}$ the
errors of the stress and displacement variables with respect to
$\mathcal{T}_{h}$ and $\mathcal{T}_{H}$, respectively. Let $h_{0}$
be the mesh-size of the quasi-uniform initial mesh
$\mathcal{T}_{0}$, and $q$ and $\alpha\in(0,1)$   two constants to be determined below. Then it holds
\begin{equation}\label{equation47}
e_{h}^{2}+(1-h_{0}q)A_{h}^{2}\leq\alpha^{2}(e_{H}^{2}+(1-h_{0}q)A_{H}^{2})
\end{equation}
when
$h_{0}\leq\frac{1-\alpha^{2}}{1+\alpha^{2}}\frac{1}{q}$.
\end{theorem}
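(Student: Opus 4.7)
The plan is to combine the three error-reduction style estimates of Lemmas~\ref{Lemma8}, \ref{Lemma9}, \ref{Lemma10} with the estimator reduction of Lemma~\ref{lemma7}, and then balance the free parameters $\delta_1,\varepsilon_0,\gamma_1,\gamma_2$ together with the Young parameter $\delta$ in Lemma~\ref{lemma7} so that the $h_0$-free part contracts, while all residual $h_0$ (and $h_0^2$) factors are swept into a single ``$h_0q$'' deficit acting on $A_h^2$ and $A_H^2$.

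First I would form the sum $||S^{-1/2}(u-u_h)||^2 + \sum_K c_{w,r,K}||p-p_h||_K^2 + \delta_1(1-\varepsilon_0)^{-1}||\nabla\cdot(u-u_h)||^2$, i.e.\ $e_h^2$ plus the divergence contribution appearing in $A_h^2$. Adding Lemma~\ref{Lemma8}, $\delta_1(1-\varepsilon_0)^{-1}$ times Lemma~\ref{Lemma9}, and Lemma~\ref{Lemma10}, I would pick $\varepsilon_0$ (and, if necessary, tighten $\delta_1$) so that the positive coefficient of $\delta_1||\nabla\cdot(u_h-u_H)||^2/2$ arising in Lemma~\ref{Lemma8} is absorbed by the negative $-\tfrac{1}{2}\delta_1(1-\varepsilon_0)^{-1}||\nabla\cdot(u_h-u_H)||^2$ coming from Lemma~\ref{Lemma9}, and so that $4D_2||S^{-1/2}(u-u_h)||^2$ can be moved to the left at the cost of shrinking the constant in front of $||S^{-1/2}(u-u_h)||^2$ by a uniformly positive amount. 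The outcome is a clean identity of the form
\begin{equation*}
e_h^2+\delta_1(1-\varepsilon_0)^{-1}||\nabla\cdot(u-u_h)||^2
\le e_H^2+\delta_1(1-\varepsilon_0)^{-1}||\nabla\cdot(u-u_H)||^2
-\mu E_H^2+C\,h_0\bigl(e_h^2+h_0(\text{osc terms})\bigr),
\end{equation*}
where $E_H^2$ is the quantity appearing in Lemma~\ref{lemma7} and $\mu>0$.

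Next, I would treat $\gamma_1\eta_h^2$ via Lemma~\ref{lemma7}: choosing $\delta$ small enough and using the D\"orfler property $\eta_{\mathcal{T}_H}^2(u_H,p_H,\mathcal{M}_H)\ge\theta^2\eta_H^2$ gives $\eta_h^2\le(1+\delta)(1-\lambda\theta^2)\eta_H^2+C_3^2(1+\delta^{-1})D_{\mathcal{T}_0}^2 E_H^2$. I would then pick $\gamma_1$ so that the reduction factor $(1+\delta)(1-\lambda\theta^2)<1$ provides the contraction engine, and the $E_H^2$ term is dominated by the $\mu E_H^2$ slack produced in the previous step (this is where choosing $\gamma_1$ small relative to $\mu$ is crucial). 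For the data term $||f-f_h||^2+||h\nabla_h(S^{-1}u_h\cdot w)||^2$ weighted by $\gamma_2$, I would exploit that $L^2$-projection onto piecewise constants is monotone under refinement, so $||f-f_h||\le||f-f_H||$, and bound the inverse-gradient term by the estimator using the efficiency estimate~(\ref{equation25}) so that it is absorbed into $\gamma_1\eta_H^2$ up to a lower-order $E_H^2$ contribution; in this way the $\gamma_2$ part is also strictly reduced or controlled by the already contracting quantities.

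Finally, invoking the upper bound $e_h^2\le C_1\eta_h^2$ lets me write $h_0e_h^2\le C_1h_0\eta_h^2\le C_1h_0\gamma_1^{-1}\,\gamma_1\eta_h^2$, so the leftover $h_0$-terms in the first display are absorbed into a single $h_0q\,A_h^2$ defect for a constant $q$ depending on $C_1,C_2,C_D,D_1,\dots,D_4,\gamma_1,\gamma_2,\delta_1$. Collecting everything gives an inequality of the form $e_h^2+A_h^2\le\beta(e_H^2+A_H^2)+h_0q\,A_h^2$ with $\beta<1$; rearranging and setting $\alpha^2:=\beta/(1-h_0q)\cdot(\text{adjustment})$ produces exactly the stated contraction, provided $h_0\le(1-\alpha^2)/((1+\alpha^2)q)$. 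The main obstacle I anticipate is the coupled tuning of $\delta_1,\varepsilon_0,\gamma_1,\gamma_2,\delta$: the same parameters must simultaneously (i) kill the cross term $||\nabla\cdot(u_h-u_H)||^2$, (ii) make $\mu E_H^2$ dominate the estimator-reduction residual $C_3^2(1+\delta^{-1})D_{\mathcal{T}_0}^2 E_H^2$, and (iii) keep the total reduction factor strictly below one; this is a constrained optimization carried out by first fixing $\delta_1,\varepsilon_0$ from step~1, then fixing $\gamma_1$ to absorb $E_H^2$, then $\gamma_2,\delta$, and finally choosing $\alpha^2\in(\beta,1)$ and the threshold for $h_0$.
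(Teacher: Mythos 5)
Your overall architecture is the same as the paper's: sum Lemmas \ref{Lemma8}--\ref{Lemma10} with weight $\delta_1$ on the divergence identity, cancel the cross term $\|\nabla\cdot({\bf u}_h-{\bf u}_H)\|^2$, absorb $4\delta_1 D_2\|S^{-1/2}({\bf u}-{\bf u}_h)\|^2$ by taking $\delta_1$ small, use the slack $-\tfrac12 E_H^2$ to swallow the $E_H^2$ residuals by calibrating $\gamma_1,\gamma_2$, and sweep the leftover $h_0$-terms into a $qh_0$ defect. Two steps in your outline, however, would not close as written. First, the target inequality $e_h^2+A_h^2\le\beta(e_H^2+A_H^2)+h_0q\,A_h^2$ with $\beta<1$ is not attainable, because several components of $A_H^2$ do not decrease under refinement at all: $\|f-f_H\|^2$ is literally unchanged, the gradient term only satisfies $\|h\nabla_h(S^{-1}{\bf u}_h\cdot{\bf w})\|^2\le(1+\delta_3)\|H\nabla_H(S^{-1}{\bf u}_H\cdot{\bf w})\|^2+(1+\delta_3^{-1})C_7^2D_2E_H^2$ (a factor larger than one), and $\|\nabla\cdot({\bf u}-{\bf u}_H)\|^2$ carries coefficient exactly one after the cancellation in Lemma \ref{Lemma9}. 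None of these can acquire a factor $\beta<1$ unless you also allow an $O(h_0)A_H^2$ term on the right. The paper achieves this by splitting off fractions $\delta_4$ and $\delta_5(1+\delta_3)$ of these components, bounding the split-off divergence piece via (\ref{equation20})/(\ref{equation63+}) by $\eta_H^2$ plus the data terms, and choosing $\delta_4,\delta_5\lesssim h_0$; the correct intermediate inequality is $e_h^2+A_h^2\le\tilde\alpha^2(e_H^2+A_H^2)+qh_0(A_h^2+A_H^2)$, which still rearranges to the stated contraction. Your alternative of absorbing the gradient data term into $\gamma_1\eta_H^2$ through the efficiency estimate (\ref{equation25}) does not work either: (\ref{equation25}) controls $h_K\|S^{-1}{\bf u}_h\|_K$ by the local error, not $h_K\|\nabla(S^{-1}{\bf u}_h\cdot{\bf w})\|_K$ by the estimator; the paper instead uses a triangle plus inverse inequality on ${\bf u}_h-{\bf u}_H$.

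Second, after moving $\varepsilon_0 e_h^2$ to the left you are effectively dividing by $1-\varepsilon_0$, so $e_H^2$ reappears on the right with coefficient $1+\varepsilon>1$, while the D\"{o}rfler reduction acts only on $\gamma_1\eta_H^2$. To push the $e_H^2$ coefficient below one you must invoke the reliability bound on the \emph{coarse} mesh, $e_H^2\le C_1\eta_H^2$, to trade part of the gain $-\gamma_1(1+\delta_2)\lambda\theta^2\eta_H^2$ for a reduction $-\tfrac12\gamma_1(1+\delta_2)\lambda\theta^2 C_1^{-1}e_H^2$, and then pick $\varepsilon$ small relative to $\gamma_1\lambda\theta^2C_1^{-1}$. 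Your outline uses reliability only on the fine mesh (to convert $h_0e_h^2$ into $h_0\eta_h^2$); without the coarse-mesh use the factor in front of $e_H^2$ stays above one and no $\beta<1$ exists. With these two repairs your parameter-tuning order (first $\delta_1,\varepsilon_0$, then $\gamma_1$, then $\gamma_2$ and the Young parameters, finally $h_0$) coincides with the paper's and the argument goes through.
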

\begin{proof}\ For convenience, denote
\begin{equation*}
D_{5} :=2D_{1}+4\delta_{1}^{2}D_{1}D_{3}+2\delta_{1}D_{1}D_{4}.
\end{equation*}
Recalling the definition, (\ref{estimator1}),  of $E_{H}$, a combination
of (\ref{equation48}), (\ref{equation53}) and (\ref{equation56}) indicates
\begin{equation}\label{equation59}
\begin{array}{lll}
& &e_{h}^{2}+\delta_{1}||\nabla\cdot({\bf u}-{\bf u}_{h})||^{2}\leq
e_{H}^{2}+\delta_{1}||\nabla\cdot({\bf u}-{\bf
u}_{H})||^{2}-1/2E_{H}^{2}+h_{0}D_{5}\delta_{1}^{-1}e_{h}^{2}\vspace{2mm}\\
& & \hspace{2.8cm}+4\delta_{1}D_{2}e_{h}^{2}
+D_{5}\delta_{1}^{-1}h_{0}^{2}(||f-f_{h}||^{2}+||h\nabla_{h}(S^{-1}{\bf
u}_{h}\cdot{\bf w})||^{2}).
\end{array}
\end{equation}
For a constant $\varepsilon>0$ which will be determined below, denote
$\varepsilon_{0} :=\frac{\varepsilon}{1+\varepsilon}$. We firstly
choose $\delta_{1}$ with $4\delta_{1}D_{2}\leq\varepsilon_{0}$.
The reliable estimate, (\ref{equation39}), of the stress and displacement variables, together with (\ref{equation59}),
implies
\begin{equation*}
\begin{array}{lll}
e_{h}^{2}+\delta_{1}||\nabla\cdot({\bf u}-{\bf u}_{h})||^{2}&\leq&
e_{H}^{2}+\delta_{1}||\nabla\cdot({\bf u}-{\bf
u}_{H})||^{2}-1/2E_{H}^{2}+C_{1}D_{5}\delta_{1}^{-1}h_{0}\eta_{h}^{2}\vspace{2mm}\\
& &+
\varepsilon_{0}e_{h}^{2}+D_{5}\delta_{1}^{-1}h_{0}^{2}(||f-f_{h}||^{2}+||h\nabla_{h}(S^{-1}{\bf
u}_{h}\cdot{\bf w})||^{2}),
\end{array}
\end{equation*}
which results in the following inequality:
\begin{equation}\label{equation60}
\begin{array}{lll}
& &e_{h}^{2}+\frac{\delta_{1}}{1-\varepsilon_{0}}||\nabla\cdot({\bf
u}-{\bf u}_{h})||^{2}\leq(1+\varepsilon)
e_{H}^{2}+\frac{\delta_{1}}{1-\varepsilon_{0}}||\nabla\cdot({\bf
u}-{\bf
u}_{H})||^{2}-\frac{1}{2(1-\varepsilon_{0})}E_{H}^{2}\vspace{2mm}\\
&
&\hspace{2.1cm}+\frac{C_{1}D_{5}}{\delta_{1}(1-\varepsilon_{0})}h_{0}\eta_{h}^{2}
+\frac{D_{5}}{\delta_{1}(1-\varepsilon_{0})}h_{0}^{2}(||f-f_{h}||^{2}+||h\nabla_{h}(S^{-1}{\bf
u}_{h}\cdot{\bf w})||^{2}).
\end{array}
\end{equation}

According to   triangle inequality and inverse inequality, it
holds, for each $K\in\mathcal{T}_{h}$,
\begin{equation}\label{equation61-}
h_{K}||\nabla(S^{-1}{\bf u}_{h}\cdot{\bf w})||_{K}\leq
h_{K}||\nabla(S^{-1}{\bf u}_{H}\cdot{\bf
w})||_{K}+C_{7}||S^{-1}({\bf u}_{h}-{\bf u}_{H})\cdot{\bf w}||_{K}.
\end{equation}
Notice that $||f-f_{h}||_{K}\leq||f-f_{H}||_{K}$ for all
$K\in\mathcal{T}_{h}$. For any given $\delta_{3}>0$ which will be
determined below, (\ref{equation61-}) and Young inequality imply
\begin{equation}\label{equation61-+}
\begin{array}{lll}
||f-f_{h}||^{2}+||h\nabla_{h}(S^{-1}{\bf u}_{h}\cdot{\bf
w})||^{2}&\leq&(1+\delta_{3})(||f-f_{H}||^{2}+||H\nabla_{H}(S^{-1}{\bf
u}_{H}\cdot{\bf w})||^{2})\vspace{2mm}\\
& &\ \ +(1+\delta_{3}^{-1})C_{7}^{2}D_{2}||S^{-1/2}({\bf u}_{h}-{\bf
u}_{H})||^{2}.
\end{array}
\end{equation}

From the definition, (\ref{quantity}), of $A_{h}^{2}$, the estimator reduction (\ref{equation40}) with the
marking strategy, the estimates (\ref{equation60})  and (\ref{equation61-+}),
and the fact $||S^{-1/2}({\bf u}_{h}-{\bf
u}_{H})||^{2}\leq E_{H}^{2}$, it holds, for any given $\delta_{2}>0$ which
will be determined below,
\begin{equation}\label{equation61}
\begin{array}{lll}
& &e_{h}^{2}+A_{h}^{2}\leq(1+\varepsilon)
e_{H}^{2}+\frac{\delta_{1}}{1-\varepsilon_{0}}||\nabla\cdot({\bf
u}-{\bf u}_{H})||^{2}-\frac{1}{2(1-\varepsilon_{0})}E_{H}^{2}+
\frac{C_{1}D_{5}}{\delta_{1}(1-\varepsilon_{0})}h_{0}\eta_{h}^{2}\vspace{2mm}\\
&
&+C_{3}^{2}(1+\delta_{2}^{-1})D_{\mathcal{T}_{0}}^{2}\gamma_{1}E_{H}^{2}
+\gamma_{1}(1+\delta_{2})(1-\lambda\theta^{2})\eta_{H}^{2}+
\gamma_{2}C_{7}^{2}D_{2}(1+\frac{1}{\delta_{3}})E_{H}^{2}\vspace{2mm}\\
&
&+\frac{D_{5}}{\delta_{1}(1-\varepsilon_{0})}h_{0}^{2}(||f-f_{h}||^{2}+||h\nabla_{h}(S^{-1}{\bf
u}_{h}\cdot{\bf w})||^{2})\vspace{2mm}\\
&
&+\gamma_{2}(1+\delta_{3})(||f-f_{H}||^{2}+||H\nabla_{H}(S^{-1}{\bf
u}_{H}\cdot{\bf w})||^{2}).
\end{array}
\end{equation}
We next choose $\gamma_{1}$ and $\gamma_{2}$ such that
\begin{equation*}
\gamma_{1}C_{3}^{2}(1+\delta_{2}^{-1})D_{\mathcal{T}_{0}}^{2}=\frac{1}{4(1-\varepsilon_{0})},
\quad
\gamma_{2}C_{7}^{2}D_{2}(1+\delta_{3}^{-1})=\frac{1}{4(1-\varepsilon_{0})}.
\end{equation*}
Then it follows
\begin{equation}\label{equation62}
\begin{array}{lll}
e_{h}^{2}+A_{h}^{2}&\leq&(1+\varepsilon)
e_{H}^{2}+\frac{\delta_{1}}{1-\varepsilon_{0}}||\nabla\cdot({\bf
u}-{\bf u}_{H})||^{2}+
\gamma_{1}(1+\delta_{2})(1-\lambda\theta^{2})\eta_{H}^{2}\vspace{2mm}\\
& &
+\frac{D_{5}}{\delta_{1}(1-\varepsilon_{0})}h_{0}^{2}(||f-f_{h}||^{2}+||h\nabla_{h}(S^{-1}{\bf
u}_{h}\cdot{\bf w})||^{2})
+\frac{C_{1}D_{5}}{\delta_{1}(1-\varepsilon_{0})}h_{0}\eta_{h}^{2}\vspace{2mm}\\
&
&+\gamma_{2}(1+\delta_{3})(||f-f_{H}||^{2}+||H\nabla_{H}(S^{-1}{\bf
u}_{H}\cdot{\bf w})||^{2}).
\end{array}
\end{equation}

For any given $\delta_{4},\delta_{5}>0$ which will be determined below,
the reliable estimate (\ref{equation39}) on $\mathcal{T}_{H}$, i.e.  $e_{H}^{2}\leq C_{1}\eta_{H}^{2}$, and
the above estimate (\ref{equation62}), indicate
\begin{equation}\label{equation63}
\begin{array}{lll}
&
&e_{h}^{2}+A_{h}^{2}\leq(1+\varepsilon-1/2\lambda\theta^{2}\gamma_{1}(1+\delta_{2})C_{1}^{-1})
e_{H}^{2}+\gamma_{1}(1+\delta_{2})(1-1/2\lambda\theta^{2})\eta_{H}^{2}\vspace{2mm}\\
& &\
+\frac{\delta_{1}(1-\delta_{4})}{1-\varepsilon_{0}}||\nabla\cdot({\bf
u}-{\bf
u}_{H})||^{2}+\frac{D_{5}}{\delta_{1}(1-\varepsilon_{0})}h_{0}^{2}(||f-f_{h}||^{2}+||h\nabla_{h}(S^{-1}{\bf
u}_{h}\cdot{\bf
w})||^{2})\vspace{2mm}\\
& &\
+\frac{C_{1}D_{5}}{\delta_{1}(1-\varepsilon_{0})}h_{0}\eta_{h}^{2}
+(1-\delta_{5})\gamma_{2}(1+\delta_{3})(||f-f_{H}||^{2}+||H\nabla_{H}(S^{-1}{\bf
u}_{H}\cdot{\bf w})||^{2})\vspace{2mm}\\
& &\
+\frac{\delta_{1}\delta_{4}}{1-\varepsilon_{0}}||\nabla\cdot({\bf
u}-{\bf
u}_{H})||^{2}+(1+\delta_{3})\delta_{5}\gamma_{2}(||f-f_{H}||^{2}+||H\nabla_{H}(S^{-1}{\bf
u}_{H}\cdot{\bf w})||^{2}).
\end{array}
\end{equation}
Now we fix a sufficiently small $\delta_{2}$ and, subsequently, a
sufficiently small $\varepsilon$ such that
\begin{equation*}
\tilde{\alpha}^{2}
:=\max(1+\varepsilon-\frac{C_{1}}{2}\lambda\theta^{2}\gamma_{1}(1+\delta_{2}),
(1+\delta_{2})(1-\frac{\lambda\theta^{2}}{2}),1-\delta_{4},(1-\delta_{5})(1+\delta_{3}))<1.
\end{equation*}

Let $D_{6}$ be one variant of the coefficients given by
\begin{equation*}
D_{6} :=4\max_{K\in\mathcal{T}_{H}}(||{\bf
w}||_{L^{\infty}(K)}^{2}c_{S,K}^{-1},C_{{\bf w},r,K}^{2}c_{{\bf
w},r,K}^{-1},1).
\end{equation*}
From (\ref{equation20}), we get
\begin{equation}\label{equation63+}
||\nabla\cdot({\bf u}-{\bf u}_{H})||^{2}\leq
D_{6}(||f-f_{H}||^{2}+||H\nabla_{H}(S^{-1}{\bf u}_{H}\cdot{\bf
w})||^{2}+\eta_{H}^{2}).
\end{equation}

We further choose $\delta_{i}(i=3,4,5)$ such that
\begin{equation*}
\frac{\delta_{1}\delta_{4}}{1-\varepsilon_{0}}D_{6}\leq\gamma_{2}h_{0},\
\quad \ (1+\delta_{3})\delta_{5}\leq C_{8}h_{0}.
\end{equation*}
In fact, we may firstly fix $\delta_{3}$ satisfying
$\delta_{3}<C_{8}h_{0}$, then   choose $\delta_{5}$ such that
\begin{equation*}
\frac{\delta_{3}}{1+\delta_{3}}<\delta_{5}<\min(1,\frac{C_{8}h_{0}}{1+\delta_{3}}).
\end{equation*}
Finally, by noticing the choice of $\gamma_{2}$, we can choose
$\delta_{4}$ with
\begin{equation*}
\delta_{4}<\min(1,\frac{h_{0}}{D_{6}}\frac{1-\varepsilon_{0}}{\delta_{1}}
\frac{1}{4C_{7}^{2}D_{2}(1-\varepsilon_{0})(1+\delta_{3}^{-1})}).
\end{equation*}
These
choices,  together with (\ref{equation63}) and (\ref{equation63+}),  lead to
\begin{equation}\label{equation64}
\begin{array}{lll}
e_{h}^{2}+A_{h}^{2}&\leq&\tilde{\alpha}^{2}(e_{H}^{2}+A_{H}^{2})+
\frac{C_{1}D_{5}}{\delta_{1}(1-\varepsilon_{0})}h_{0}\eta_{h}^{2}\vspace{2mm}\\
&
&+\frac{D_{5}}{\delta_{1}(1-\varepsilon_{0})}h_{0}^{2}(||f-f_{h}||^{2}+
||h\nabla_{h}(S^{-1}{\bf u}_{h}\cdot{\bf w})||^{2})\vspace{2mm}\\
&
&+\gamma_{2}h_{0}(1+C_{8})(||f-f_{H}||^{2}+||H\nabla_{H}(S^{-1}{\bf
u}_{H}\cdot{\bf w})||^{2})+\gamma_{2}h_{0}\eta_{H}^{2}.
\end{array}
\end{equation}

Let $q$ be one variant of the coefficients given by
\begin{equation*}
q :=\max(C_{8}+1,\frac{\gamma_{2}}{\gamma_{1}},
\frac{C_{1}D_{5}}{\delta_{1}(1-\varepsilon_{0})}\frac{1}{\gamma_{1}},
\frac{D_{5}}{\delta_{1}(1-\varepsilon_{0})}\frac{h_{0}}{\gamma_{2}}).
\end{equation*}
From (\ref{equation64}) we arrive at
\begin{equation*}
e_{h}^{2}+A_{h}^{2}\leq\tilde{\alpha}^{2}(e_{H}^{2}+A_{H}^{2})+qh_{0}(A_{h}^{2}+A_{H}^{2}),
\end{equation*}
which implies
\begin{equation}\label{equation65}
e_{h}^{2}+(1-qh_{0})A_{h}^{2}\leq\tilde{\alpha}^{2}e_{H}^{2}+
(\tilde{\alpha}^{2}+qh_{0})A_{H}^{2}.
\end{equation}
We finally choose $h_{0}$ such that
\begin{equation*}
0<\frac{\tilde{\alpha}^{2}+qh_{0}}{1-qh_{0}}\leq\alpha^{2}
:=\frac{1+\tilde{\alpha}^{2}}{2},
\end{equation*}
which yields the assertion (\ref{equation47}) with
$h_{0}\leq(1-\alpha^{2})/(q(1+\alpha^{2}))$.
\end{proof}
\begin{remark}\ {\rm(Choices of the initial mesh size)}\  Some
simple calculations show
$$
q\leq\max\{D(\delta_{1},\delta_{2}),
\frac{D_{5}h_{0}C_{7}^{-2}}{4\delta_{1}(1-\varepsilon_{0})^{2}D_{2}}\}
$$
with
$$D(\delta_{1},\delta_{2})
:=\max\{C_{8}+1,\frac{C_{3}^{2}(1+\delta_{2}^{-1})D_{\mathcal{T}_{0}}^{2}}{C_{7}^{2}D_{2}},
\frac{C_{2}D_{5}}{4\delta_{1}(1-\varepsilon_{0})C_{3}^{2}(1+
\delta_{2}^{-1})D_{\mathcal{T}_{0}}^{2}}\}.$$
Then it holds
$$\frac{1}{q}\geq \min\{\frac{1}{D(\delta_{1},\delta_{2})},\frac{4\delta_{1}(1-\varepsilon_{0})^{2}C_{7}^{2}D_{2}}{D_{5}h_{0}}\},$$
which indicates
$$\frac{1}{q}\geq  \frac{1}{D(\delta_{1},\delta_{2})}\quad \text{if} \quad h_0\leq \frac{4\delta_{1}(1-\varepsilon_{0})^{2}C_{7}^{2}D_{2}}{D_5}.$$

As required in Theorem \ref{theorem1+},   the initial mesh size $h_0$ is assumed to satisfy
$h_{0}\leq \frac{1-\alpha^{2}}{q(1+\alpha^{2})}$.
Then eventually we may choose $h_0$ with
$$h_{0}\leq \min\{\frac{1-\alpha^{2}}{(1+\alpha^{2})D(\delta_{1},\delta_{2})},\frac{4\delta_{1}(1-\varepsilon_{0})^{2}C_{7}^{2}D_{2}}{D_5}\}.$$

\end{remark}

{\bf The proof of Theorem \ref{theorem--}}\ Theorem \ref{theorem1+}
shows that the error of the stress and displacement variables plus
the quantity $A_h^2$ uniformly reduces with a fixed factor $\alpha^{2}$
between two successive meshes. Replace the subscripts
$H$ and $h$ respectively  by the iteration counters   $k$ and $k+1$, we then obtain Theorem
\ref{theorem--}   directly from  Theorem \ref{theorem1+}.

\section{Numerical experiments}
In this section, we test the performance of the adaptive algorithm
AMFEM described in section 2 with four model problems. We are thus able to study how meshes
adapt to various effects from lack of regularity of solutions and
convexity of domains to data smoothness, boundary layers and changing
boundary conditions. We note that the implementation of AMFEM is done  without enforcing the interior node property
in the refinement step.

\subsection{Model problem with singularity at the origin}
We consider the problem (\ref{equation1}) in an $L$-shape domain
$\Omega=\{(-1,1)\times(0,1)\}\cup\{(-1,0)\times(-1,0)\}$ with ${\bf
w}= r=0$ and $f=0$. The exact solution  is given by
\begin{equation*}
p(\rho,\theta)=\rho^{2/3}\sin(2\theta/3),
\end{equation*}
where $\rho,\theta$ are the polar coordinates.
\begin{figure}[htbp]
  \begin{minipage}[t]{0.5\linewidth}
    \centering
    \includegraphics[width=2.5in]{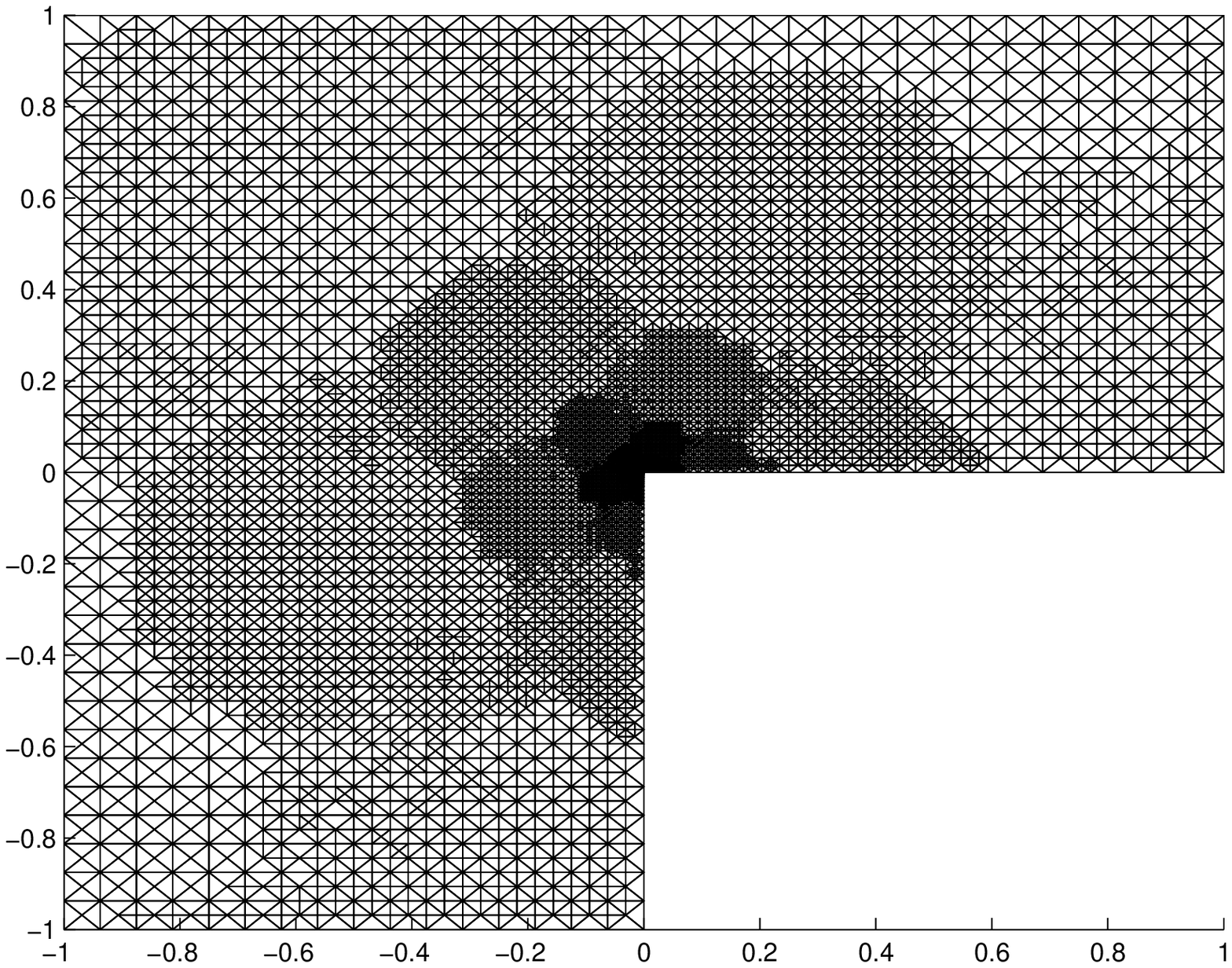}\\
  \end{minipage}
  \begin{minipage}[t]{0.5\linewidth}
    \centering
    \includegraphics[width=2.5in]{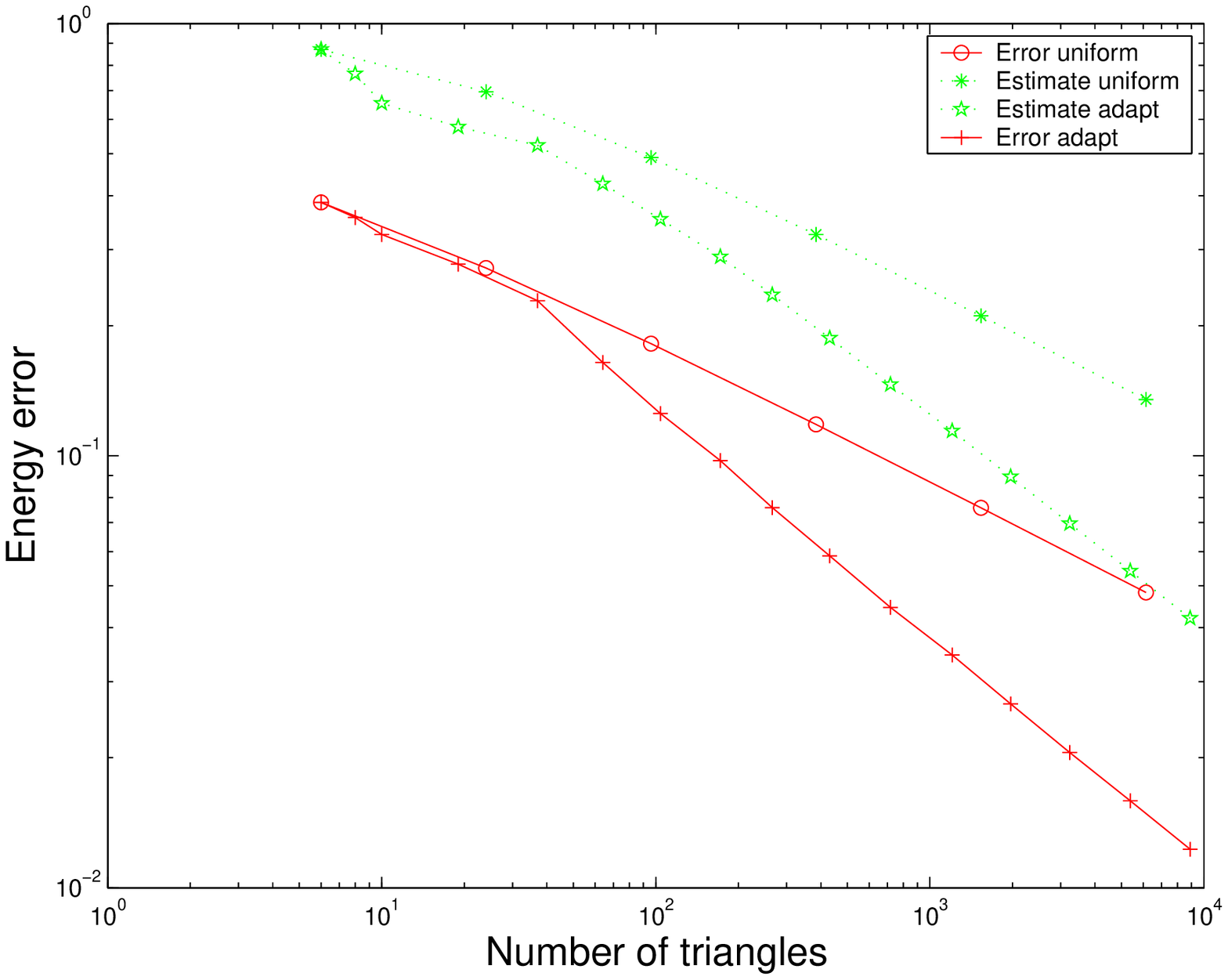}\\
  \end{minipage}
\addtocontents{lof}{figure}{FIG 6.1. {\small {\it A mesh with 14692
triangles
 (left) and the estimated and actual errors
  in uniformly / adaptively refined meshes (right) for the marking parameter $\theta=0.5$.}}}\\
\end{figure}

Since this model possesses singularity at the origin, we see in the
left figure of Fig 6.1 that the refinement concentrates around the
origin, which means the predicted error estimator captures well the
singularity of the solution. The right graph of Fig 6.1 reports the
estimated and actual errors of the numerical solutions on uniformly
and adaptively refined meshes. It can be seen that the error of the
stress and displacement in $L^{2}$ norm uniformly reduces with a
fixed factor on two successive meshes after several steps of
iterations, and that the error on the adaptively refined meshes
decreases more rapidly than the one on the uniformly refined meshes.
This shows that the adaptive mixed finite element method is
convergent with respect to the energy error.

\subsection{Model problem with inhomogeneous diffusion tensor}
We consider the problem (\ref{equation1}) in a square domain
$\Omega=(-1,1)\times(-1,1)$ with ${\bf w}=r=0$ and $f=0$, where
$\Omega$ is divided into four subdomains $\Omega_{i}$ ($i=1,2,3,4$)
corresponding to the axis quadrants (in the counterclockwise
direction), and the diffusion-dispersion tensor $S$ is piecewise
constant with $S=s_{i}I$ in $\Omega_{i}$. This model problem is
taken from \cite{Eigestad,Riviere;Wheeler2003,Vohralik1}. We suppose
the exact solution of this model has the form
\begin{equation*}
p(\rho,\theta)=\rho^{\alpha}(a_{i}sin(\alpha\theta)+b_{i}cos(\alpha\theta))
\end{equation*}
in each $\Omega_{i}$ with Dirichlet boundary conditions. Here
$\rho,\theta$ are the polar coordinates in $\Omega$, $a_{i}$ and
$b_{i}$ are constants depending on $\Omega_{i}$, and $\alpha$ is a
parameter. We note that The stress solution, ${\bf
u}=-S\nabla p$, is not continuous across the interfaces,
and only its normal component    is continuous. It finally exhibits a strong
singularity at the origin. We consider two sets of coefficients in
the following table:
\begin{center}
\scriptsize
\begin{tabular}{|c|c|} \hline
Case 1 &Case 2\\ \hline $s_{1}=s_{3}=5$,
$s_{2}=s_{4}=1$&$s_{1}=s_{3}=100$, $s_{2}=s_{4}=1$\\ \hline
$\alpha=0.53544095$ &$\alpha=0.12690207$\\ \hline
$a_{1}=\ \ 0.44721360$, $b_{1}=\ \ 1.00000000$&$a_{1}=\ \ 0.10000000$, $b_{1}=\ \ 1.00000000$\\
$a_{2}=-0.74535599$,  $b_{2}=\ \ 2.33333333$&$a_{2}=-9.60396040$, $b_{2}=\ \ 2.96039604$\\
$a_{3}=-0.94411759$,  $b_{3}=\ \ 0.55555555$&$a_{3}=-0.48035487$,  $b_{3}=-0.88275659$\\
$a_{4}=-2.40170264$, $b_{4}=-0.48148148$&$a_{4}=\ \ 7.70156488$,
$b_{4}=-6.45646175$\\ \hline
\end{tabular}
\end{center}

In MARK step, the marking parameter $\theta$,  in terms of D\"{o}rfler
marking, is chosen as $0.7$ in
the first case and as $0.94$ in the second case.  Table 6.1 shows for Case 1 some results of the actual
error $e_{k}$, the a posteriori indicator $\eta_{k}$, the
experimental convergence rate, ${\rm EOC}_{E}$, of $E_{k}$, and the
experimental convergence rate, ${\rm EOC}_{\eta}$, of $\eta_k$,
where
\begin{equation*}
{\rm EOC}_{E} :=\frac{\log(e_{k-1}/e_{k})}{\log({\rm DOF}_{k}/{\rm
DOF}_{k-1})},\ \ \ {\rm EOC}_{\eta}
:=\frac{\log(\eta_{k-1}/\eta_{k})}{\log({\rm DOF}_{k}/{\rm
DOF}_{k-1})},
\end{equation*}
  and ${\rm DOF}_{k}$ denotes the number of elements with
respect to the $k-$th iteration. We can see that the convergence
rates ${\rm EOC}_{E}$ and ${\rm EOC}_{\eta}$ are close to 0.5 as the
iteration number $k=15$, which means the optimal decays of the actual
error $e_k$ and the a posteriori error indicator $\eta_{k}$ are almost attained
after 15 iterations with optimal meshes.
\begin{table}[!h]\renewcommand{\baselinestretch}{1.25}\small
\begin{center}
 \caption{Results of actual error $E_{k}$, a
 posteriori indicator $\eta_{k}$, and their convergence rates ${\rm
EOC}_{E}$ and ${\rm EOC}_{\eta}$: Case 1}
\small 
\begin{tabular}{|c|c|c|c|c|c|} \hline
$k$& ${\rm DOF}_{k}$& $e_{k}$& $\eta_{k}$ &${\rm EOC}_{E}$& ${\rm
EOC}_{\eta}$\\ \hline 1&8&1.3665&5.0938&$-$&$-$\\ \hline
2&20&1.1346&3.4700&0.2030&0.4189\\ \hline
9&2235&0.1776&1.1115&0.4016&0.4004\\ \hline
11&7165&0.1106&0.7111&0.3851&0.4004\\ \hline
12&13188&0.0871&0.5566&0.3915&0.4015\\ \hline
14&43785&0.0510&0.3365&0.4707&0.4476\\ \hline
15&76770&0.0387&0.2581&0.4915&0.4724\\ \hline
\end{tabular}
\end{center}
\end{table}

Fig 6.2 shows an adaptively refined mesh with 4763 elements and the
estimated and actual errors against the number of elements in
adaptively refined meshes for Case 1. Fig 6.3 shows an adaptively
refined mesh with 1093 elements and the actual error against the
number of elements in adaptively refined meshes for Case 2.
\begin{figure}[htbp]
  \begin{minipage}[t]{0.5\linewidth}
    \centering
    \includegraphics[width=2.25in]{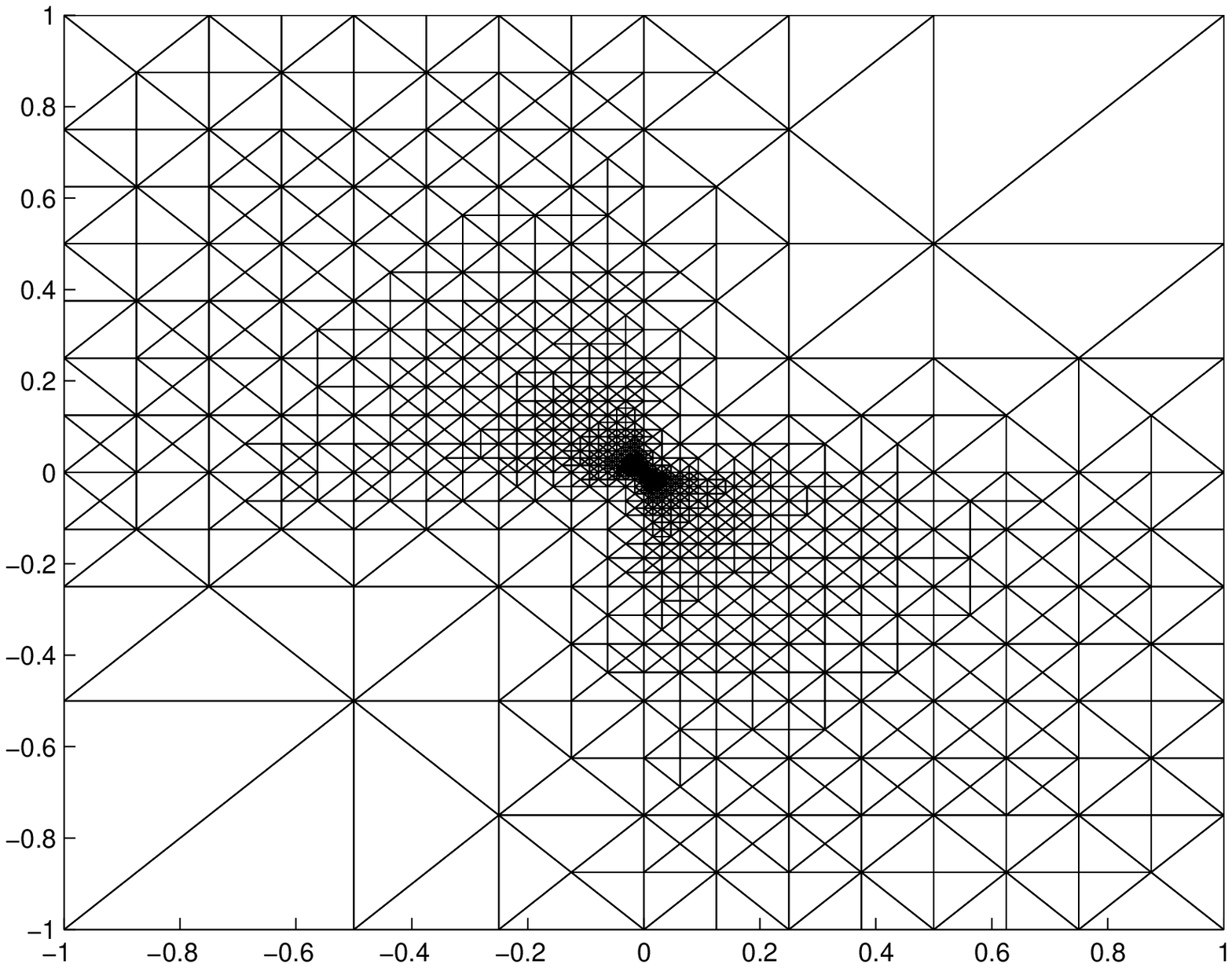}\\
  \end{minipage}
  \begin{minipage}[t]{0.5\linewidth}
    \centering
    \includegraphics[width=2.5in]{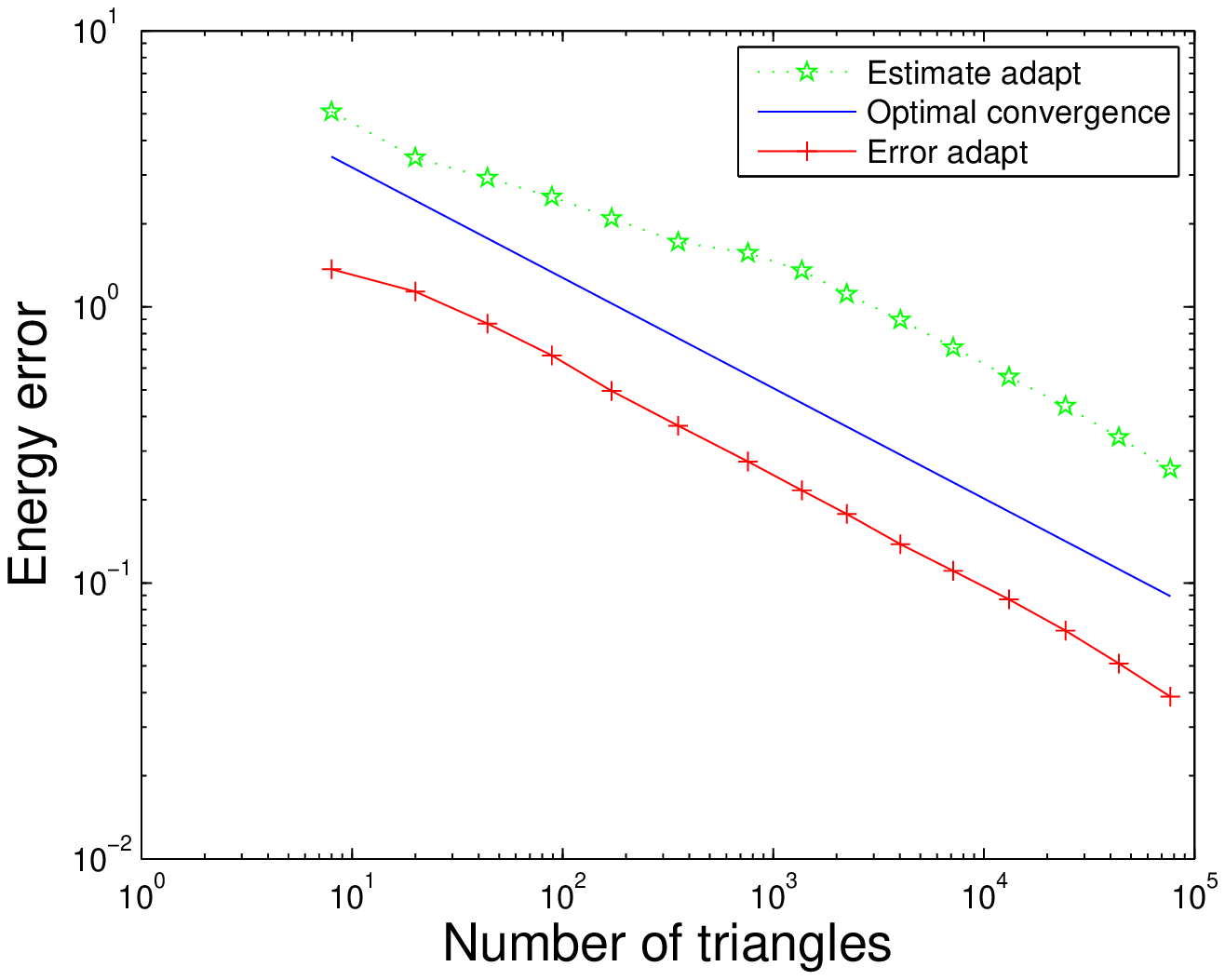}\\
  \end{minipage}
\addtocontents{lof}{figure}{FIG 6.2. {\small {\it A mesh with 4763
triangles
 (left) and the estimated and actual error
  against the number of elements in  adaptively refined meshes (right): Case 1.}}}\\
\end{figure}

From the left figures of Fig 6.2-6.3, we can see that the
refinement concentrates around the origin, which means the AMFEM
algorithm detects the region of rapid variation. In the right
graphs of Fig 6.2-6.3 each includes an optimal convergence line,
which shows in both cases, the energy error performs a trend of
descending with an optimal order convergent rate after several steps of
adaptive iterations for the problem with strongly discontinuous coefficients. We
note that the energy error is approximated with a 7-point quadrature
formula in each triangle.
\begin{figure}[htbp]
  \begin{minipage}[t]{0.5\linewidth}
    \centering
    \includegraphics[width=2.25in]{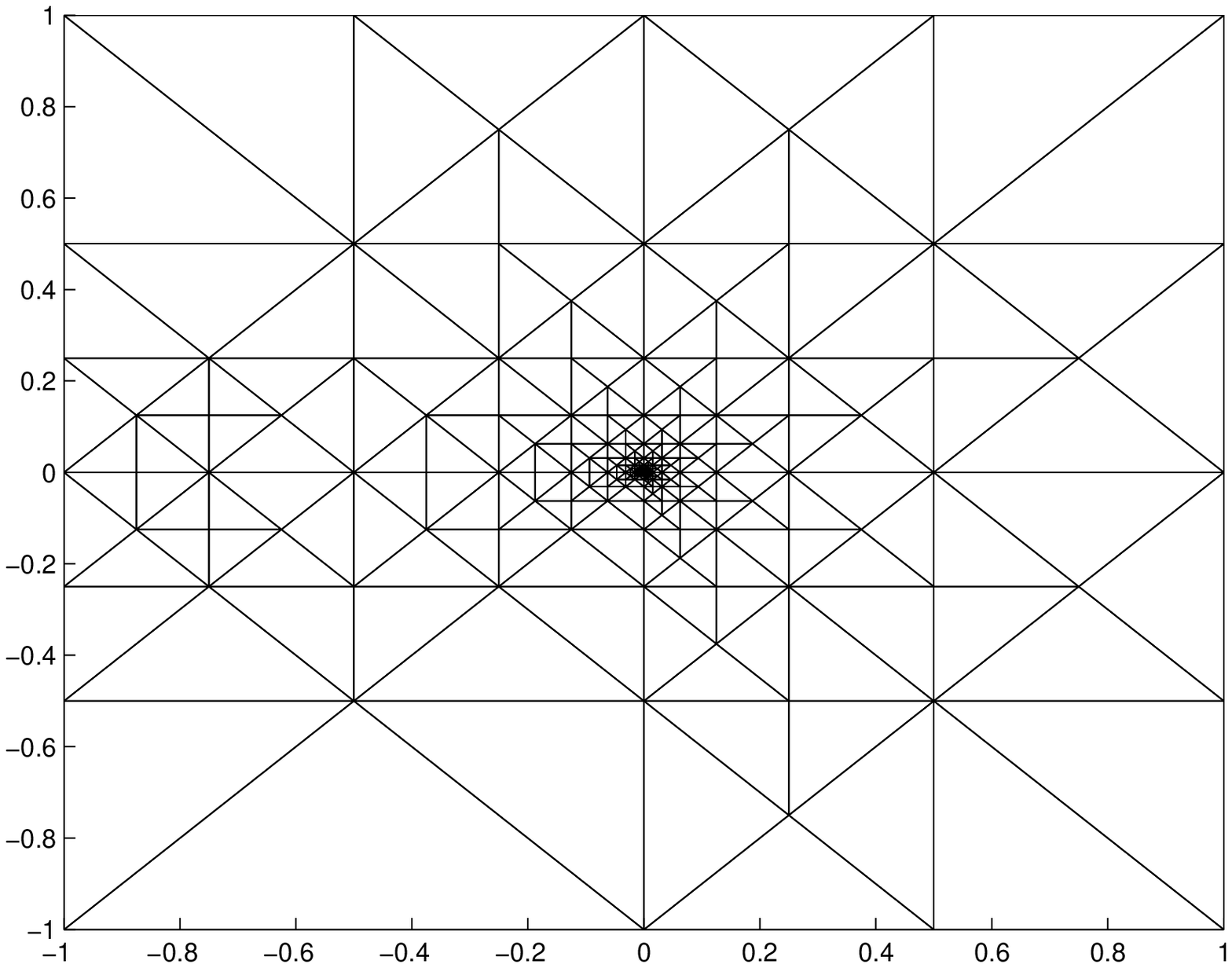}\\
  \end{minipage}
  \begin{minipage}[t]{0.5\linewidth}
    \centering
    \includegraphics[width=2.5in]{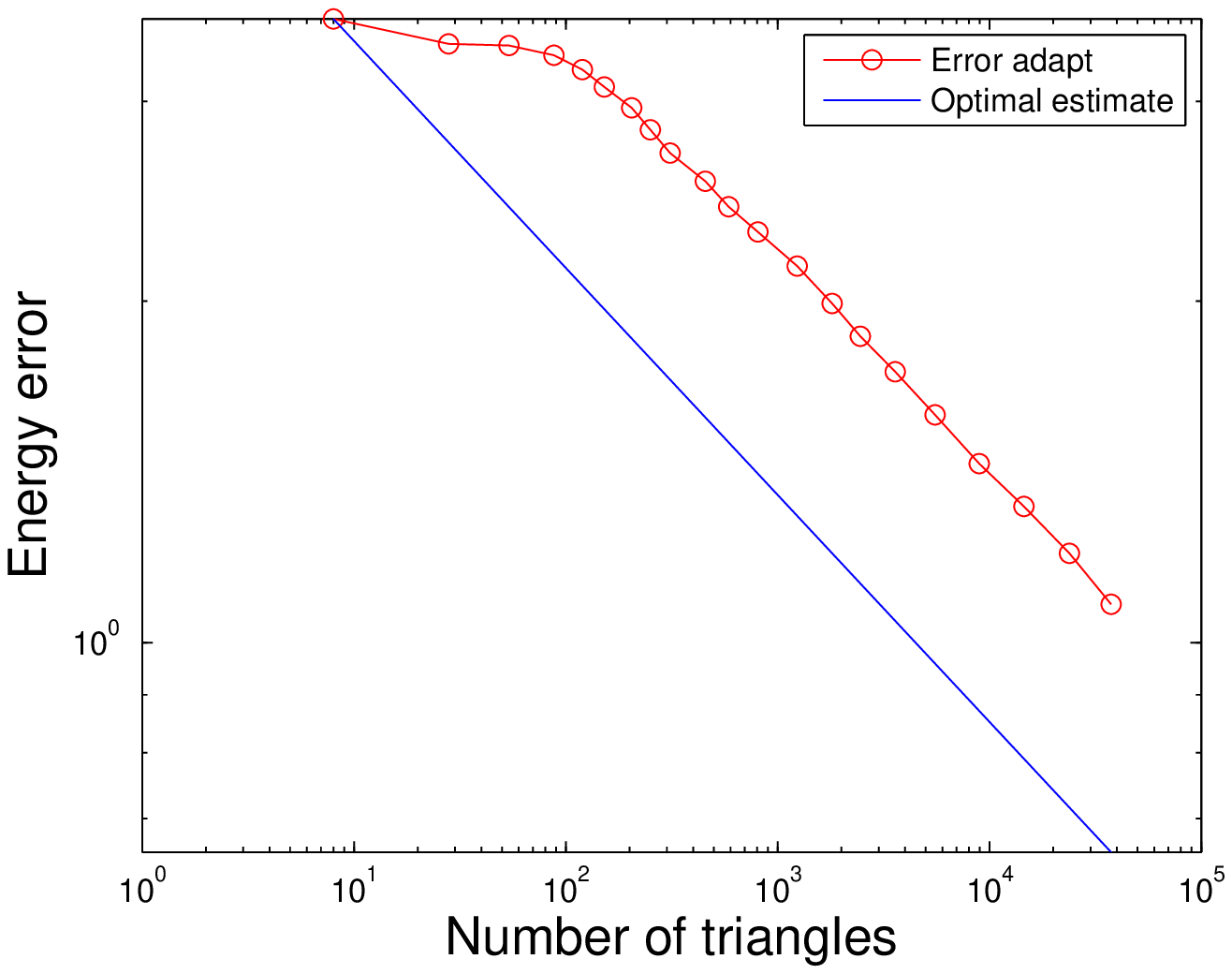}\\
  \end{minipage}
\addtocontents{lof}{figure}{FIG 6.3.\ {\small {\it A mesh with 1093
triangles
 (left) and the actual error
  against the number of elements in adaptively refined mesh (right): Case 2.}}}\\
\end{figure}

\subsection{Convection-dominated model problem: boundary layer}\ In this example, we take
$\Omega=(0,1)\times(0,1)$ in $\mathbb{R}^{2}$, and choose ${\bf
w}=(1,1)$ and $r=0$. Further, we set $p=0$ on $\partial\Omega$, and
select the right-hand side $f$ such that the analytical solution to
(\ref{equation1}) is given by
\begin{equation*}
p(x,y)=(\frac{\exp(\frac{x-1}{\varepsilon})-1}{\exp(-\frac{1}{\varepsilon})-1}+x-1)
(\frac{\exp(\frac{y-1}{\varepsilon})-1}{\exp(-\frac{1}{\varepsilon})-1}+y-1).
\end{equation*}
The solution is smooth, but has boundary layers at $x=1$ and $y=1$,
with layer width  of order $\mathcal{O}(\varepsilon)$. This
problem is well-suited to test whether the estimator is able to pick
up the steep gradients near these boundaries.

\begin{figure}[htbp]
  \begin{minipage}[t]{0.5\linewidth}
    \centering
    \includegraphics[width=2.25in]{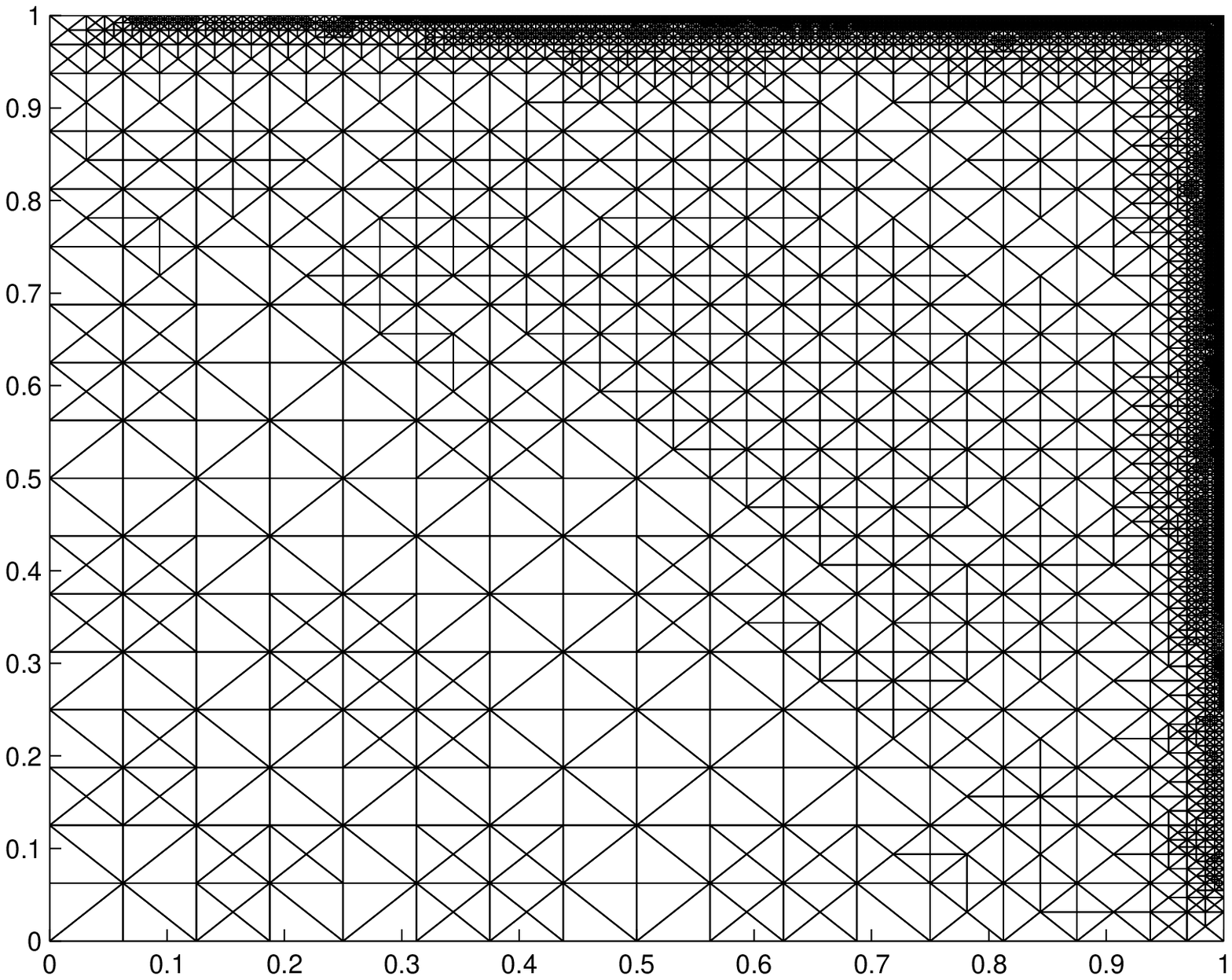}\\
  \end{minipage}
  \begin{minipage}[t]{0.5\linewidth}
    \centering
    \includegraphics[width=2.5in]{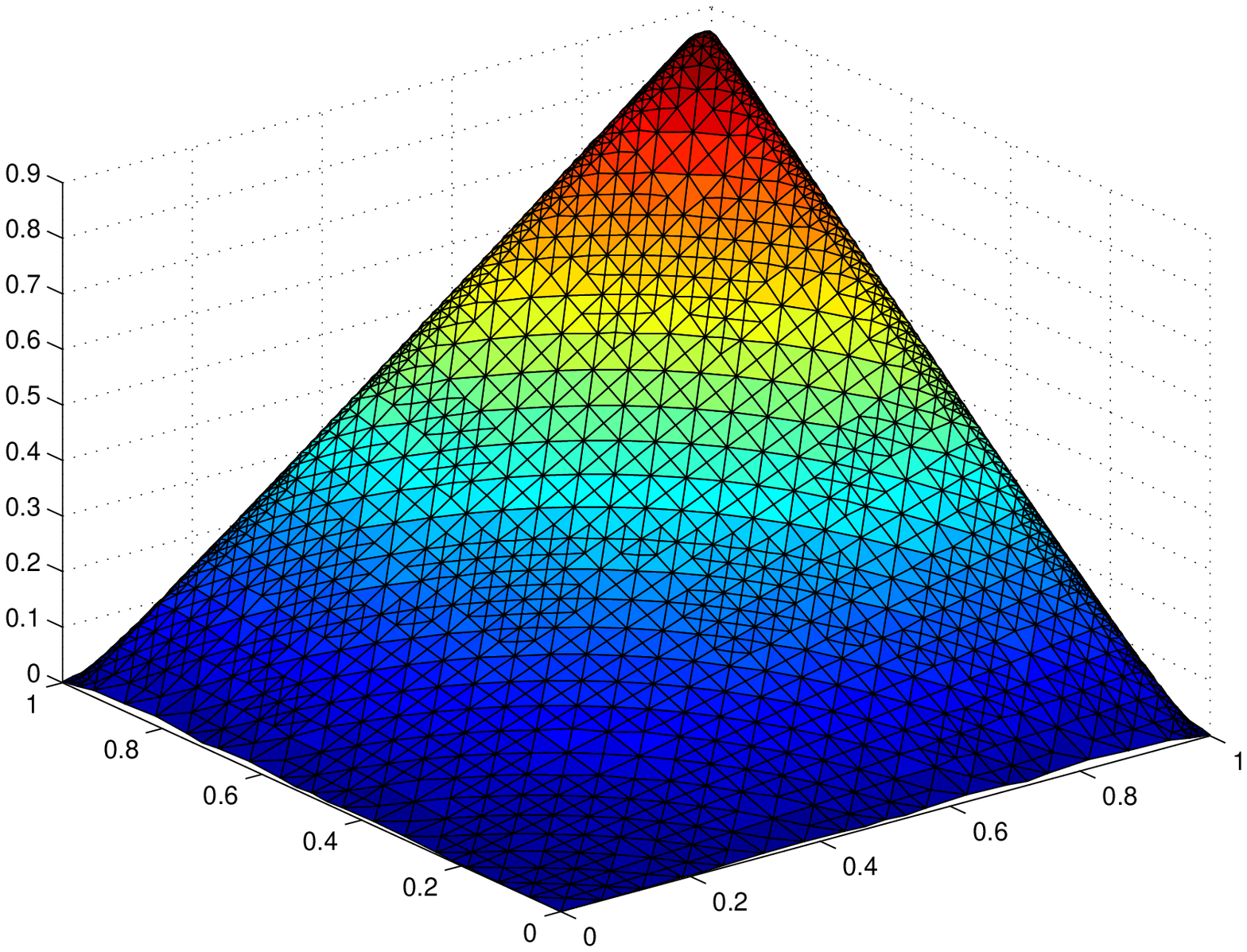}\\
  \end{minipage}
\addtocontents{lof}{figure}{FIG 6.4. {\small {\it A mesh with 10838
triangles
 (left) and postprocessing approximate displacement on the corresponding
 adaptively refined mesh (right) for $\varepsilon=0.01$.}}}\\
\end{figure}

We start computations from the origin mesh consisted of 8
right-angled triangles, and we choose the marking parameter $\theta=0.5$ in   the adaptive algorithm AMFEM.
\begin{figure}[htbp]
  \begin{minipage}[t]{0.5\linewidth}
    \centering
    \includegraphics[width=2.5in]{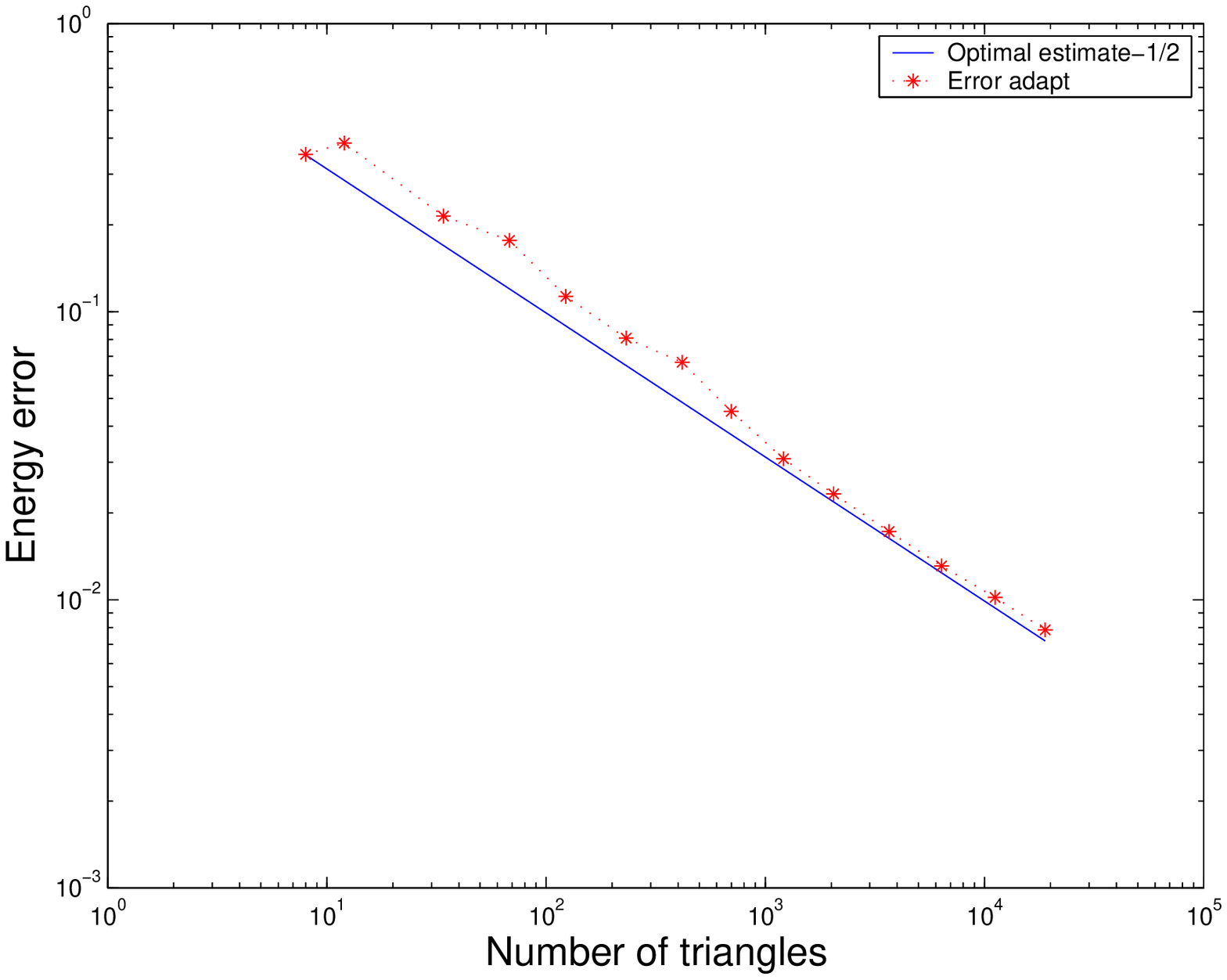}\\
  \end{minipage}
  \begin{minipage}[t]{0.5\linewidth}
    \centering
    \includegraphics[width=2.5in]{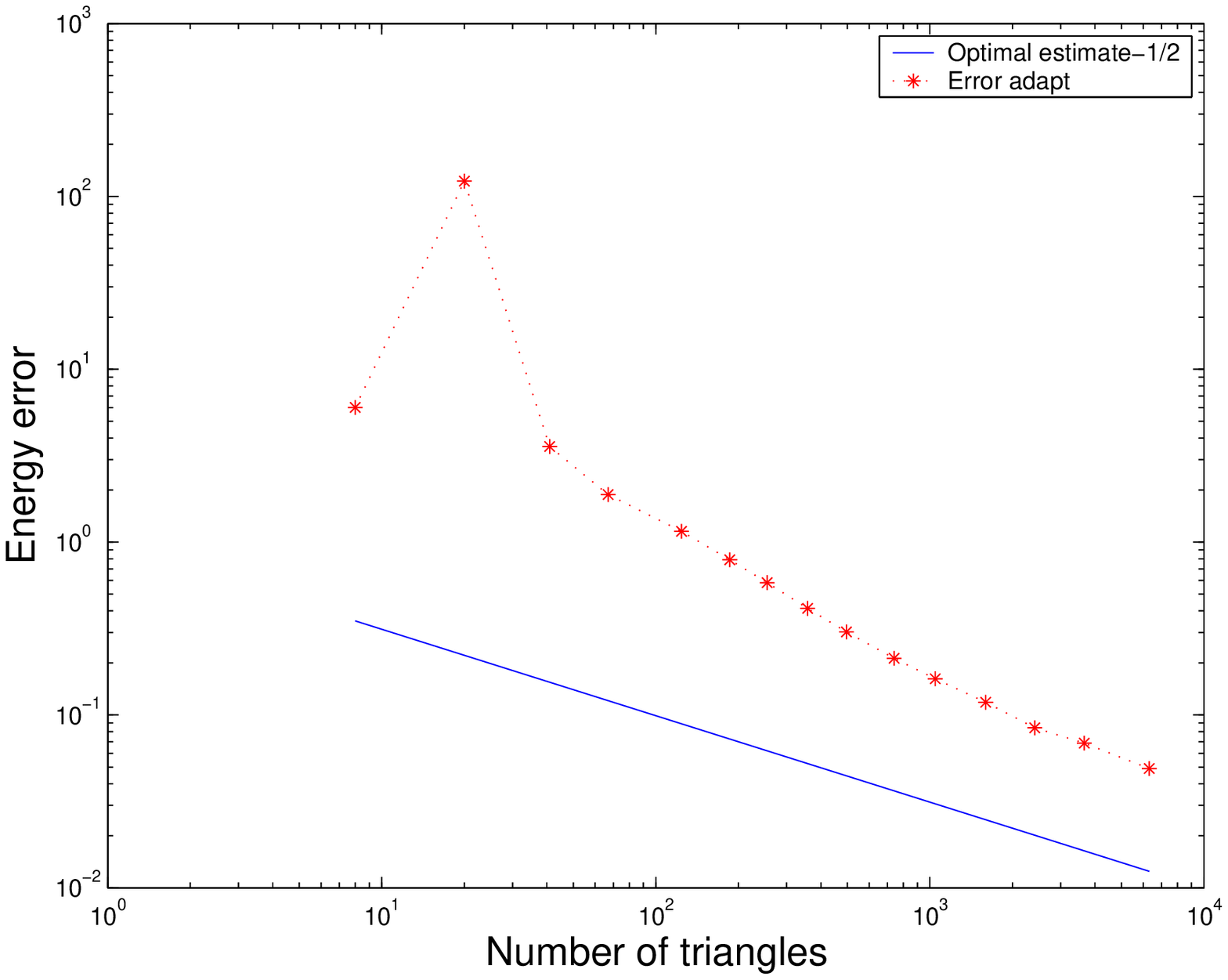}\\
  \end{minipage}
\addtocontents{lof}{figure}{FIG 6.5. {\small {\it  Actual error
against the number of elements in adaptively refined meshes for
$\varepsilon=0.1$ (left) and $\varepsilon=0.01$ (right) for the
marking parameter $\theta=0.5$.}}}\\
\end{figure}

Fig 6.4 shows the mesh with 10838 triangles (left) and the
postprocessing approximation to the scalar displacement $p$ on the
corresponding adaptively refined mesh (right) in the case
$\varepsilon=0.01$. Here the value of the postprocessing
approximation on each vertex is taken as the algorithmic mean of the
values of the displacement finite element solution on all the
elements sharing the vertex. The reason for the postprocessing is
that the displacement finite element solution is not continuous on
each vertex of the triangulation. We see that the refinement focuses
around boundary layers, which indicates that the estimators actually
capture boundary layers and resolve them in convection-domianed
regions. In addition, the postprocessing approximation to the scalar
displacement obtains  satisfactory results.

Fig 6.5 shows   the actual error (energy error) results
against the number of elements in adaptively refined meshes for
$\varepsilon=0.1$ (left) and $\varepsilon=0.01$ (right),
including two  theoretically-optimal order
(-1/2)   convergence lines. We see that in each case the actual error descends almost at the
optimal rate of convergence after several steps of iterations. The   numerical results confirm our theoretical
analysis.

\subsection{Convection-dominated model problem: interior and boundary layer}\ 
Set the domain $\Omega=[-1,1]\times[-1,1]$ with non-homogeneous
Dirichlet boundary conditions, the velocity field ${\bf w}=(2,1)$,
and the reaction term $r=0$ in (\ref{equation1}). The source term
$f=0$, the Dirichlet boundary conditions are as follows: $p=0 $ along the left and top
sides of the square and $p=100$ along the right and bottom sides.
The exact solution of this problem is unknown, but it is known that
it exhibits an exponential boundary layer at the boundary $x=1,y>0$
and a parabolic interior layer along the line connecting the points
$(-1,-1)$ and $(1,0)$.
\begin{figure}[htbp]
  \begin{minipage}[t]{0.5\linewidth}
    \centering
    \includegraphics[width=2.3in]{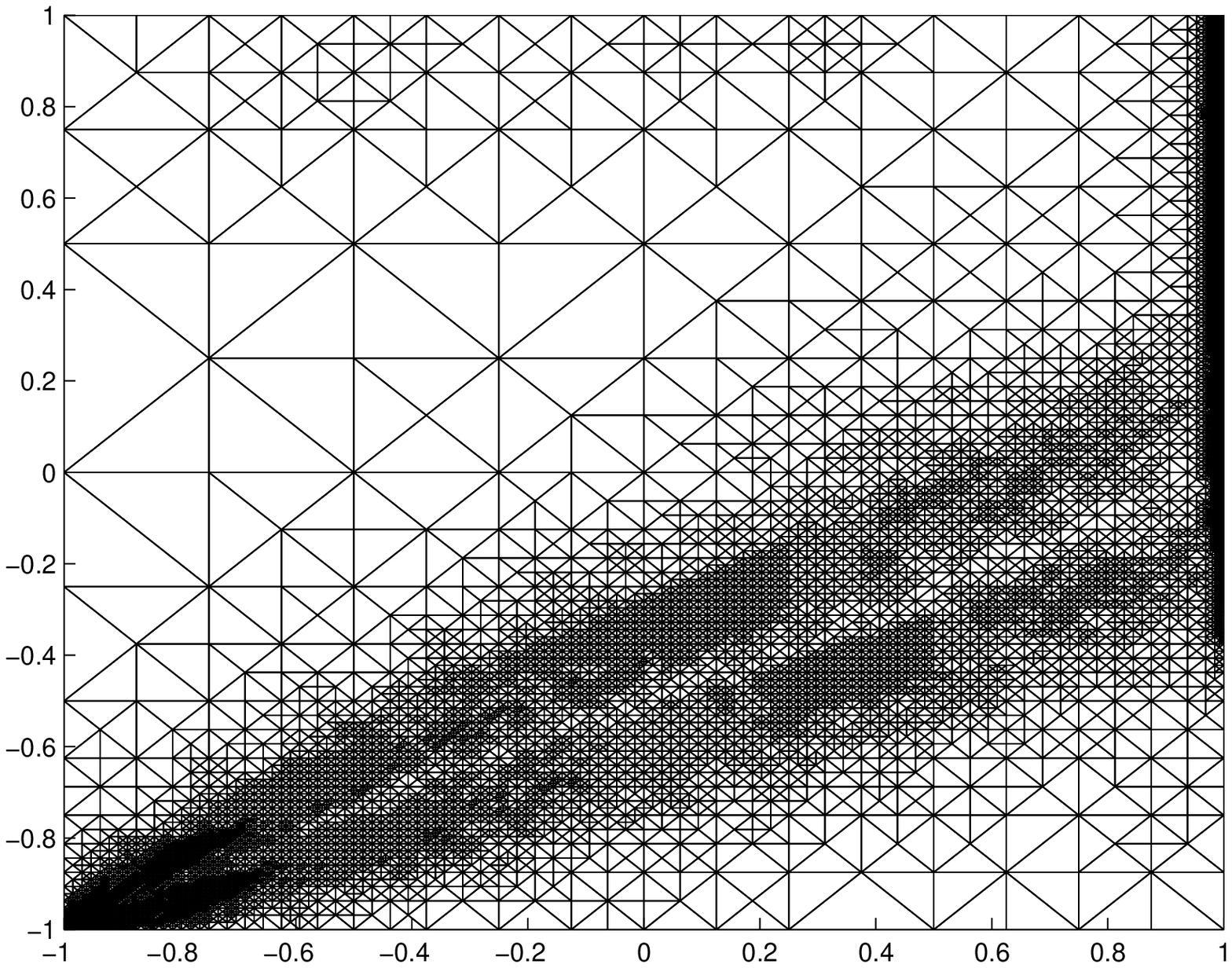}\\
  \end{minipage}
  \begin{minipage}[t]{0.5\linewidth}
    \centering
    \includegraphics[width=2.3in]{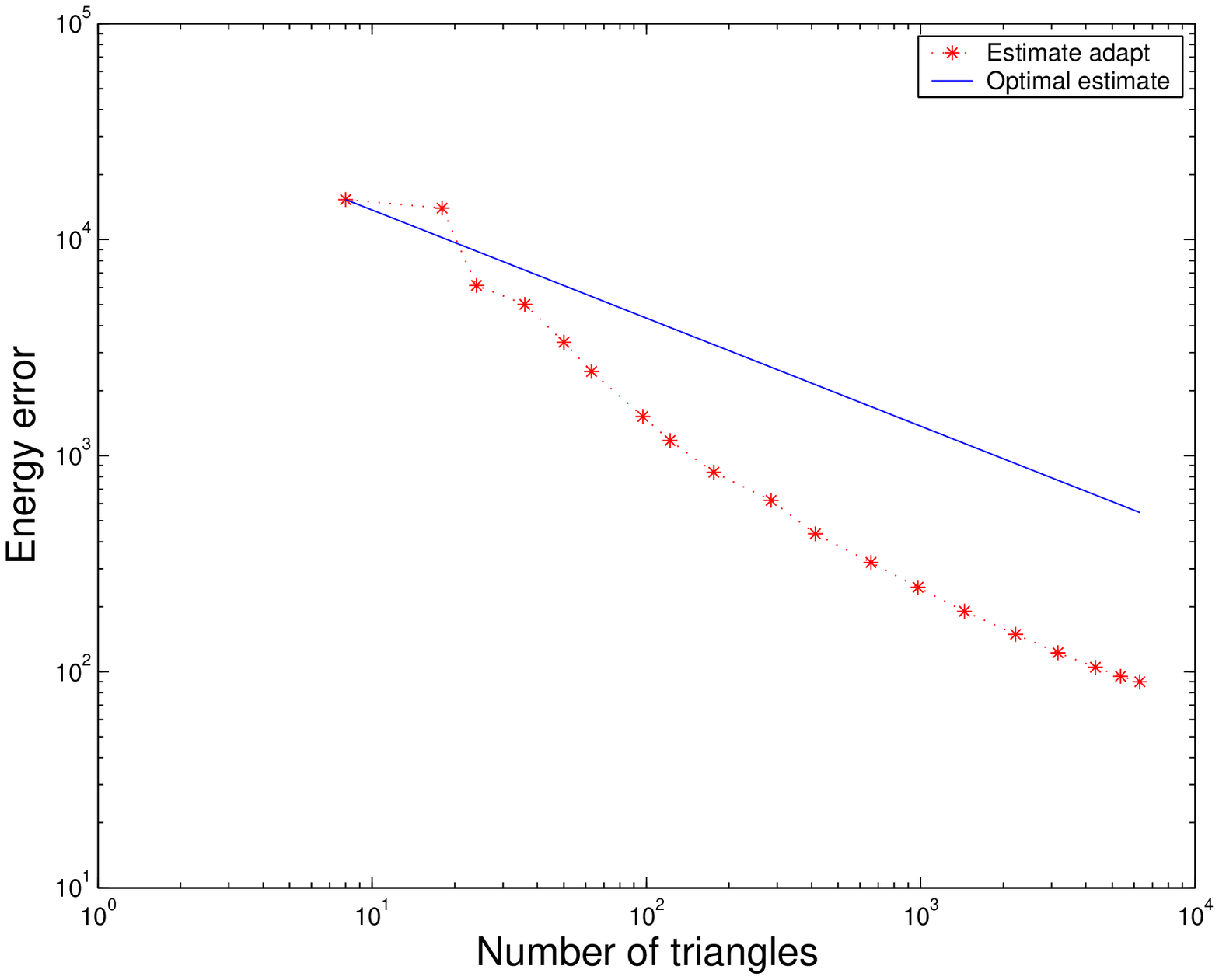}\\
  \end{minipage}
\addtocontents{lof}{figure}{FIG 6.6. {\small {\it A mesh with 47324
elements (left) for $\varepsilon=0.01,\theta=0.8$ and estimated
error against the number of elements in adaptively refined meshes
(right) for $\varepsilon=0.1,\theta=0.5$.}}}
\end{figure}

We still perform the AMFEM algorithm described in section 2 from the
origin mesh consisted of 8 right-angled triangles. From the left
graph of Fig 6.6, we can see that when using adaptive refinement the
mesh concentrates close to the exponential and parabolic layers. We note that
the refinement first occurs close to the region $x=1,y>0$, since
  the exponential layer is more stronger than the parabolic
layer.  The left
graph also illustrates that the a posteriori error estimator exactly
capture the behavior of the solution. The right
graph of Fig 6.6 shows that the estimated error rapidly reduces
starting from the fourth step of iterations, and  reaches the optimal
rate (-1/2) of convergence until the seventeenth step. This   convergence result is consistent with our theoretical analysis.

\end{document}